\def\dd{\mathinner{\mkern1mu\raise8pt\hbox{.}\mkern2mu
\raise4pt\hbox{.}\mkern2mu\hbox{.}\mkern1mu}}
\def\rdd{\mathinner{\mkern1mu\hbox{.}\mkern2mu
\raise4pt\hbox{.}\mkern2mu\raise8pt\hbox{.}\mkern1mu}}
\newcommand{\mo}{\mathrm{mod}\,}
\newcommand{\ind}{\mathrm{ind}\,}
\newcommand{\Hom}{\mathrm{Hom}}
\newcommand{\Tr}{\mathrm{Tr}}
\newcommand{\Imi}{\mathrm{Im}}
\newcommand{\soc}{\mathrm{soc}}
\newcommand{\topp}{\mathrm{top}}
\newcommand{\supp}{\mathrm{supp}}
\newcommand{\su}{\mathrm{Supp}}
\newcommand{\End}{\mathrm{End}}
\newcommand{\Ext}{\mathrm{Ext}}
\newcommand{\rad}{\mathrm{rad}}
\newcommand{\add}{\mathrm{add}}
\newcommand{\ann}{\mathrm{ann}}
\newcommand{\gd}{\mathrm{gl.\,dim\,}}
\newcommand{\pd}{\mathrm{pd}}
\newcommand{\id}{\mathrm{id}}
\newcommand{\rep}{\mathrm{rep}}
\newcommand{\Fac}{\mathrm{Fac}}
\newtheorem{thm}{Theorem}[section]
\newtheorem{prop}[thm]{Proposition}
\newtheorem{lem}[thm]{Lemma}
\newtheorem{cor}[thm]{Corollary}
\theoremstyle{definition}
\newtheorem{ex}[thm]{Example}
\begin{document}

\title{Finite cycles of indecomposable modules}
%
\author[Malicki]{Piotr Malicki}
\address{Faculty of Mathematics and Computer Science, Nicolaus Copernicus University, Chopina 12/18, 87-100 Toru\'n, Poland}
\email{pmalicki@mat.uni.torun.pl}

\thanks{This work was completed with the support of the research grant DEC-2011/02/A/ST1/00216 of the Polish National Science Center
and the CIMAT Guanajuato, M\'exico.}
\author[de la Pe\~na]{Jos\'e Antonio de la Pe\~na}
\address{Centro de Investigaci\'on en Mathem\'aticas (CIMAT), Guanajuato, M\'exico}
\email{jap@cimat.mx}
%
\author[Skowro\'nski]{Andrzej Skowro\'nski}
\address{Faculty of Mathematics and Computer Science, Nicolaus Copernicus University, Chopina 12/18, 87-100 Toru\'n, Poland}
\email{skowron@mat.uni.torun.pl}


\subjclass{16G10, 16G60, 16G70}
\dedicatory{Dedicated to Raymundo Bautista on the occasion of his 70th birthday}

\keywords{Cycles of modules, Generalized multicoil algebras, Generalized double tilted algebras, Auslander-Reiten quiver}

\baselineskip 15pt

\begin{abstract}
We solve a long standing open problem concerning the structure of finite cycles in the category $\mo A$ of finitely generated modules
over an arbitrary artin algebra $A$, that is, the chains of homomorphisms
$M_0 \buildrel {f_1}\over {\hbox to 6mm{\rightarrowfill}} M_1 \to \cdots \to M_{r-1} \buildrel {f_r}\over {\hbox to 6mm{\rightarrowfill}} M_r=M_0$
between indecomposable modules in $\mo A$ which do not belong to the infinite radical of $\mo A$.
In particular, we describe completely the structure of an arbitrary module category $\mo A$ whose all cycles are finite.
The main structural results of the paper
allow to derive several interesting combinatorial and homological properties of indecomposable modules lying on finite cycles.
For example, we prove that for all but finitely many isomorphism classes of indecomposable modules $M$ lying on finite cycles of a~module
category $\mo A$ the Euler characteristic of $M$ is well defined and nonnegative.
As an another application of these results we obtain a characterization of all cycle-finite module categories $\mo A$ having only
a~finite number of functorially finite torsion classes.
Moreover, new types of examples illustrating the main results of the paper are presented.
\end{abstract}

\maketitle
\begin{center}
{\sc 0. Introduction}
\end{center}

Throughout the paper, by an algebra is meant an artin algebra over a fixed commutative artin ring $K$, which we shall assume (without loss of generality)
to be basic and indecomposable. For an algebra $A$, we denote by $\mo A$ the category of finitely generated right $A$-modules and by $\ind A$ the full
subcategory of $\mo A$ formed by the indecomposable modules. The Jacobson radical $\rad_A$ of $\mo A$ is the ideal generated by all nonisomorphisms between modules
in $\ind A$, and the infinite radical $\rad^{\infty}_A$ of $\mo A$ is the intersection of all powers $\rad^i_A$, $i\geq 1$, of $\rad_A$.
By a result of Auslander \cite{Au}, $\rad_A^{\infty}=0$ if and only if $A$ is of finite representation type, that is, $\ind A$ admits only a finite
number of pairwise nonisomorphic modules (see also \cite{KS0} for an alternative proof of this result).
On the other hand, if $A$ is of infinite representation type then $(\rad_A^{\infty})^2\neq 0$, by a result
proved in \cite{CMMS}.

An important combinatorial and homological invariant of the module category $\mo A$ of an algebra $A$ is its Auslander-Reiten quiver $\Gamma_A$.
Recall that $\Gamma_A$ is a valued translation quiver whose vertices are the isomorphism classes $\{X\}$ of modules $X$ in $\ind A$, the arrows correspond
to irreducible homomorphisms between modules in $\ind A$, and the translation is the Auslander-Reiten translation $\tau_A=D\Tr$.
We shall not distinguish between a module in $X$ in $\ind A$ and the corresponding vertex $\{X\}$ of $\Gamma_A$. If $A$ is an algebra of finite
representation type, then every nonzero nonisomorphism in $\ind A$ is a finite sum of composition of irreducible homomorphisms between modules in $\ind A$,
and hence we may recover $\mo A$ from the translation quiver $\Gamma_A$. In general, $\Gamma_A$ describes only the quotient category $\mo A/\rad^{\infty}_A$.

Let $A$ be an algebra and $M$ a module in $\ind A$. An important information concerning the structure of $M$ is coded in the structure and properties of its
support algebra $\su(M)$ defined as follows. Consider a decomposition $A=P_M\oplus Q_M$ of $A$ in $\mo A$ such that the simple summands of the semisimple module
$P_M/\rad P_M$ are exactly the simple composition factors of $M$. Then $\su(M)=A/t_A(M)$, where $t_A(M)$ is the ideal in $A$ generated by the images of all
homomorphisms from $Q_{M}$ to $A$ in $\mo A$. We note that $M$ is an indecomposable module over $\su(M)$. Clearly, we may realistically hope to describe the
structure of $\su(M)$ only for modules $M$ having some distinguished properties.

A prominent role in the representation theory of algebras is played by cycles of indecomposable modules (see \cite{MPS1}, \cite{MS4}, \cite{Ri1}, \cite{Sk7}).
Recall that a \textit{cycle} in $\ind A$ is a sequence
\[ M_0 \buildrel {f_1}\over {\hbox to 6mm{\rightarrowfill}} M_1 \to \cdots \to M_{r-1} \buildrel {f_r}\over {\hbox to 6mm{\rightarrowfill}} M_r=M_0 \]
of nonzero nonisomorphisms in $\ind A$ \cite{Ri1}, and such a cycle is said to be \textit{finite} if the homomorphisms $f_1,\ldots, f_r$
do not belong to $\rad_A^{\infty}$ (see \cite{AS1}, \cite{AS2}). Following Ringel \cite{Ri1}, a module $M$ in $\ind A$ which does not lie on a cycle in
$\ind A$ is called \textit{directing}. The following two important results on directing modules were established by Ringel in \cite{Ri1}.
Firstly, if $A$ is an algebra with all modules in $\ind A$ being directing, then $A$ is of finite representation type. Secondly, the support algebra $\su(M)$
of a directing module $M$ over an algebra $A$ is a tilted algebra $\End_H(T)$, for a hereditary algebra $H$ and a tilting module $T$ in $\mo H$, and $M$ is
isomorphic to the image $\Hom_H(T,I)$ of an indecomposable injective module $I$ in $\mo H$ via the functor $\Hom_H(T,-): \mo H \to \mo\End_H(T)$.
In particular, it follows that, if $A$ is an algebra of infinite representation type, then $\ind A$ always contains a cycle. Moreover, it has been proved
independently by Peng and Xiao \cite{PX} and Skowro\'nski \cite{Sk4} that the Auslander-Reiten quiver $\Gamma_A$ of an
algebra $A$ admits at most finitely many $\tau_A$-orbits containing directing modules. Hence, in order to obtain information on the support algebras $\su(M)$
of nondirecting modules in $\ind A$, it is natural to study properties of cycles in $\ind A$ containing $M$.
A module $M$ in $\ind A$ is said to be \textit{cycle-finite} if $M$ is nondirecting and every cycle in $\ind A$ passing through $M$ is finite.
Obviously, every indecomposable module over an algebra of finite representation type is cycle-finite. Examples of cycle-finite indecomposable modules
over algebras of infinite representation type are provided by all indecomposable modules in the stable tubes of tame hereditary algebras \cite{DlRi},
canonical algebras \cite{Ri1}, \cite{Ri2}, or more generally concealed canonical algebras \cite{LP}. Following Assem and Skowro\'nski \cite{AS1}, \cite{AS2},
an algebra $A$ is said to be \textit{cycle-finite} if all cycles in $\ind A$ are finite. The class of cycle-finite algebras is wide and contains the
following distinguished classes of algebras: the algebras of finite representation type, the tame tilted algebras \cite{HR}, \cite{Ke}, \cite{Ri1},
the tame double tilted algebras \cite{RS1},
the tame generalized double tilted algebras \cite{RS2},
the tubular algebras \cite{Ri1}, \cite{Ri2}, the iterated tubular algebras \cite{PTo}, the tame quasi-tilted algebras \cite{LS}, \cite{Sk10},
the tame generalized multicoil algebras \cite{MS2}, the algebras with cycle-finite derived categories \cite{AS1}, and the strongly simply connected
algebras of polynomial growth \cite{Sk9}.
We also mention that a selfinjective algebra $A$ is cycle-finite if and only if $A$ is of finite representation type \cite{JS}.
On the other hand, frequently an algebra $A$ admits a Galois covering $R\to R/G = A$, where $R$ is a cycle-finite locally bounded category and $G$
is an admissible group of automorphisms of $R$, which allows to reduce the representation theory of $A$ to the representation theory of cycle-finite
algebras being finite convex subcategories of $R$ (see \cite{PS3} and \cite{Sk9-5} for some general results).
For example, every finite dimensional selfinjective algebra $A$ of polynomial growth over an
algebraically closed field $K$ admits a canonical standard form $\overline A$ (geometric socle deformation of $A$) such that $\overline A$
has a Galois covering $R\to R/G = \overline A$, where $R$ is a cycle-finite selfinjective locally bounded category and $G$ is an admissible infinite
cyclic group of automorphisms of $R$, the Auslander-Reiten quiver $\Gamma_{\overline A}$ of $\overline A$ is the orbit quiver $\Gamma_R/G$ of $\Gamma_R$,
and the stable Auslander-Reiten quivers of $A$ and $\overline A$ are isomorphic (see \cite{Sk1}, \cite{Sk13}).
We refer to \cite{BSSW}, \cite{MPS1}, \cite{Sk8} for some general results on the structure of cycle-finite algebras and their module categories.

In the paper we are concerned with the problem of describing the support algebras of cycle-finite modules over arbitrary (artin) algebras.
We note that this may be considered as a natural extension of the problem concerning the structure of support algebras of directing modules,
solved by Ringel in \cite{Ri1}. Namely, the directing modules in $\ind A$ may be viewed as modules $M$ in $\ind A$ for which every oriented cycle
of nonzero homomorphisms in $\ind A$ containing $M$ consists entirely of isomorphisms. The considered problem, initiated more than 25 years ago in
\cite{AS1}, turned out to be very difficult, and many researchers involved to its solution resigned. The main obstacle for solution of this problem was
the large complexity of finite cycles of indecomposable modules and the fact that all cycles of indecomposable modules over algebras of finite representation
type are finite. The main results of the paper show that new classes of algebras and complete understanding of the structure of their module categories
were necessary for the solution of the considered problem. We will outline now our approach towards solution of the problem.

Let $A$ be an algebra and $M$ be a cycle-finite module in $\ind A$. Then every cycle in $\ind A$ passing through $M$ has a refinement to a cycle
of irreducible homomorphisms in $\ind A$ containing $M$ and consequently $M$ lies on an oriented cycle in the Auslander-Reiten quiver $\Gamma_A$ of $A$.
Following Malicki and Skowro\'nski \cite{MS1}, we denote by $_c\Gamma_A$ the \textit{cyclic quiver} of $A$ obtained from $\Gamma_A$ by removing all acyclic vertices
(vertices not lying on oriented cycles in $\Gamma_A$) and the arrows attached to them. Then the connected components of the translation quiver
$_c\Gamma_A$ are said to be \textit{cyclic components} of $\Gamma_A$. It has been proved in \cite{MS1} that two modules $X$ and $Y$ in $\ind A$
belong to the same cyclic component of $\Gamma_A$ if and only if there is an oriented cycle in $\Gamma_A$ passing through $X$ and $Y$. For a cyclic
component $\Gamma$ of $_c\Gamma_A$, we consider a decomposition $A=P_{\Gamma}\oplus Q_{\Gamma}$ of $A$ in $\mo A$ such that the simple summands
of the semisimple module $P_{\Gamma}/\rad P_{\Gamma}$ are exactly the simple composition factors of indecomposable modules in $\Gamma$, the ideal $t_A(\Gamma)$
in $A$ generated by the images of all homomorphisms from $Q_{\Gamma}$ to $A$ in $\mo A$, and call the quotient algebra $\su(\Gamma)=A/t_A(\Gamma)$ the
\textit{support algebra} of $\Gamma$. Observe now that $M$ belongs to a unique cyclic component $\Gamma(M)$ of $\Gamma_A$ consisting entirely of cycle-finite
indecomposable modules, and the support algebra $\su(M)$ of $M$ is a quotient algebra of the support algebra $\su(\Gamma(M))$ of $\Gamma(M)$. A cyclic component
$\Gamma$ of $\Gamma_A$ containing a cycle-finite module is said to be a cycle-finite cyclic component of $\Gamma_A$.
We will prove that the support algebra $\su(\Gamma)$ of a cycle-finite cyclic component $\Gamma$ of $\Gamma_A$ is isomorphic to an algebra of the form
$e_{\Gamma}Ae_{\Gamma}$ for an idempotent $e_{\Gamma}$ of $A$ whose primitive summands correspond to the vertices of a convex subquiver of the valued quiver
$Q_A$ of $A$. On the other hand, the support algebra $\su(M)$ of a cycle-finite module $M$ in $\ind A$ is not necessarily an algebra of the form
$eAe$ for an idempotent $e$ of $A$ (see Section \ref{exs-fin}).

The main results of the paper provide
a conceptual description of the support algebras of cycle-finite cyclic components of $\Gamma_A$. The description splits into two cases.
In the case when a cycle-finite cyclic component $\Gamma$ of $\Gamma_A$ is infinite, we prove that $\su(\Gamma)$ is a suitable gluing  of finitely many
generalized multicoil algebras (introduced by Malicki and Skowro\'nski in \cite{MS2}) and algebras of finite representation type, and $\Gamma$ is the
corresponding gluing of the associated cyclic generalized multicoils via finite translation quivers. In the second case when a cycle-finite cyclic component
$\Gamma$ is finite, we prove that $\su(\Gamma)$ is a generalized double tilted algebra (in the sense of Reiten and Skowro\'nski \cite{RS2}) and
$\Gamma$ is the core of the connecting component of this algebra.

We would like to mention that the generalized multicoil algebras form a prominent class of algebras of global dimension at most 3, containing the class
of quasitilted algebras of canonical type, and are obtained by sophisticated gluings of concealed canonical algebras using admissible algebra operations,
generalizing the coil operations proposed by Assem and Skowro\'nski in \cite{AS2}. The generalized double tilted algebras form a distinguished
class of algebras, containing all tilted algebras and all algebras of finite representation type, and can be viewed as two-sided gluings of tilted
algebras. The tilted algebras and quasitilted algebras of canonical type were under intensive investigation over the last two decades by many
representation theory algebraists. Hence, the main results of the paper give a good understanding of the support algebras of cycle-finite cyclic components.
On the other hand, the results and examples presented in the paper create new interesting open problems and research directions (see Section \ref{main}).

The paper is organized as follows. In Section \ref{main} we present the main results of the paper and related background.
In Section \ref{cyccomp} we describe properties of cyclic components of the Auslander-Reiten quivers of algebras, applied in the proofs of the main
theorems. Sections \ref{pfth1}, \ref{pfth2} and \ref{pfthm10} are devoted to the proofs of Theorems \ref{thm1}, \ref{thm2} and \ref{thm10}, respectively.
In Sections \ref{exs-inf} and \ref{exs-fin} we present new types of examples, illustrating the main results of the paper.

For basic background on the representation theory applied here we refer to \cite{ASS}, \cite{ARS}, \cite{Ri1}, \cite{SS1}, \cite{SS2}, \cite{SY}.

The main results of the paper have been proved during the visit of P. Malicki and A. Skowro\'nski at the Centro de Investigaci\'on en Mathem\'aticas (CIMAT)
in Guanajuato (November 2012), who would like to thank J. A. de la Pe\~na and CIMAT for the warm hospitality and wonderful conditions for
the successful realization of this joint research project. The results were presented by the first named author during the conferences
"Advances in Representation Theory of Algebras" (Guanajuato, December 2012) and "Perspectives of Representation Theory of Algebras" (Nagoya, November 2013).

\section{Main results and related background}
\label{main}





In order to formulate the main results of the paper we need special types of components of the
Auslander-Reiten quivers of algebras and distinguished classes of algebras with separating families of Auslander-Reiten components.

Let $A$ be an algebra. For a subquiver $\Gamma$ of $\Gamma_A$, we denote by $\ann_A(\Gamma)$ the intersection of the annihilators $\ann_A(X)=\{a\in A\mid Xa=0\}$
of all indecomposable modules $X$ in $\Gamma$, and call the quotient algebra $B(\Gamma)=A/\ann_A(\Gamma)$ the \textit{faithful algebra} of $\Gamma$.
By a component of $\Gamma_A$ we mean a connected component of the translation quiver $\Gamma_A$.
A component $\mathcal C$ of $\Gamma_A$ is called \textit{regular} if $\mathcal C$ contains neither a projective module nor an
injective module, and \textit{semiregular} if $\mathcal C$ does not contain both a projective and an injective module. It has been shown in \cite{Li1}
and \cite{Zh} that a regular component $\mathcal C$ of $\Gamma_A$ contains an oriented cycle if and only if $\mathcal C$ is a \textit{stable tube}
(is of the form ${\Bbb Z}{\Bbb A}_{\infty}/(\tau^{r})$, for a positive integer $r$). Moreover, Liu proved in \cite{Li2} that a semiregular component
$\mathcal C$ of $\Gamma_A$ contains an oriented cycle if and only if $\mathcal C$ is a \textit{ray tube} (obtained from a stable tube by a finite number
(possibly zero) of ray insertions) or a \textit{coray tube} (obtained from a stable tube by a finite number (possibly zero) of coray insertions).
A component $\mathcal C$ of $\Gamma_A$ is said to be \textit{coherent} \cite{MS1} (see also \cite{DR}) if the following two conditions are satisfied:

(C1) For each projective module $P$ in $\mathcal C$ there is an infinite sectional path

$\hspace{2cm}P~=~X_1 \rightarrow X_2 \rightarrow \cdots \rightarrow X_i\rightarrow X_{i+1} \rightarrow X_{i+2} \rightarrow \cdots .$ 

(C2) For each injective module $I$ in $\mathcal C$ there is  an infinite sectional path

$\hspace{2cm}\cdots \rightarrow Y_{j+2} \rightarrow Y_{j+1} \rightarrow Y_j \rightarrow \cdots \rightarrow Y_2 \rightarrow Y_1 = I.$ 

\noindent Further, a component $\mathcal C$ of $\Gamma_A$ is said to be \textit{almost cyclic} if its cyclic part $_c{\mathcal C}$ is a cofinite subquiver of $\mathcal C$.
%
We note that the stable tubes, ray tubes and coray tubes of $\Gamma_A$ are special types of
almost cyclic coherent components. In general, it has been proved by Malicki and Skowro\'nski in \cite{MS1} that a component
$\mathcal C$ of $\Gamma_A$ is almost cyclic and coherent
if and only if $\mathcal C$ is a \textit{generalized multicoil}, obtained from a finite family of stable tubes by a sequence of admissible operations (ad~1)-(ad~5)
and their duals (ad~1$^*$)-(ad~5$^*$). On the other hand, a component $\mathcal C$ of $\Gamma_A$ is said to be \textit{almost acyclic} if all but finitely many
modules of $\mathcal C$ are acyclic. It has been proved by Reiten and Skowro\'nski in \cite{RS2} that a component $\mathcal C$ of $\Gamma_A$ is almost acyclic
if and only if $\mathcal C$ admits a multisection $\Delta$. Moreover, for an almost acyclic component $\mathcal C$ of $\Gamma_A$, there exists a finite convex
subquiver $c(\mathcal C)$ of $\mathcal C$ (possibly empty), called the \textit{core} of $\mathcal C$, containing all modules lying on oriented cycles in $\mathcal C$
(see \cite{RS2} for details).
A family $\mathcal C=({\mathcal C}_i)_{i\in I}$ of components of $\Gamma_A$ is said to be \textit{generalized standard} if $\rad_A^{\infty}(X,Y)=0$ for all modules
$X$ and $Y$ in $\mathcal C$ \cite{Sk3}, and \textit{sincere} if every simple module in $\mo A$ occurs as a composition factor of a module in $\mathcal C$.
Finally, following Assem, Skowro\'nski and Tom\'e \cite{AST}, a family ${\mathcal C}$ = $({\mathcal C}_{i})_{i \in I}$ of components of $\Gamma_A$ is said to be \textit{separating} if the
components in $\Gamma_A$ split into three disjoint families ${\mathcal P}^A$, ${\mathcal C}^A={\mathcal C}$ and ${\mathcal Q}^A$ such that:

(S1) ${\mathcal C}^A$ is a sincere generalized standard family of components;

(S2) $\Hom_{A}({\mathcal Q}^A,{\mathcal P}^A) = 0$, $\Hom_{A}({\mathcal Q}^A,{\mathcal C}^A)=0$, $\Hom_{A}({\mathcal C}^A,{\mathcal P}^A) = 0$;

(S3) any morphism from ${\mathcal P}^A$ to ${\mathcal Q}^A$ in $\mo A$ factors through the additive
category $\add({\mathcal C}^A)$ of ${\mathcal C}^A$. \\
We then say that ${\mathcal C}^A$ separates ${\mathcal P}^A$ from ${\mathcal Q}^A$ and write
$$\Gamma_A={\mathcal P}^A \cup {\mathcal C}^A \cup {\mathcal Q}^A.$$
We mention that then the families ${\mathcal P}^A$ and ${\mathcal Q}^A$ are uniquely determined by the separating family ${\mathcal C}^A$, and
${\mathcal C}^A$ is a faithful family of components in $\Gamma_A$, that is, $\ann_A({\mathcal C}^A)=0$.

In the representation theory of algebras an important role is played by the canonical algebras introduced by Ringel in \cite{Ri1} and \cite{Ri2}.
Every canonical algebra $\Lambda$ is of global dimension at most $2$ and its Auslander-Reiten quiver $\Gamma_{\Lambda}$ admits a canonical separating
family ${\mathcal T}^{\Lambda}$ of stable tubes, so $\Gamma_{\Lambda}$ admits a disjoint union decomposition
$\Gamma_{\Lambda}={\mathcal P}^{\Lambda} \cup {\mathcal T}^{\Lambda} \cup {\mathcal Q}^{\Lambda}$. Then an algebra $C$ of the form $\End_{\Lambda}(T)$,
with $T$ a tilting module in the additive category $\add({\mathcal P}^{\Lambda})$ of ${\mathcal P}^{\Lambda}$ is called a \textit{concealed canonical algebra}
of type $\Lambda$, and ${\mathcal T}^{C}=\Hom_{\Lambda}(T,{\mathcal T}^{\Lambda})$ is a separating family of stable tubes in $\Gamma_C$, so we have
a disjoint union decomposition $\Gamma_C={\mathcal P}^C \cup {\mathcal T}^C \cup {\mathcal Q}^C$. It has been proved by Lenzing and de la Pe\~na in
\cite{LP} that an algebra $A$ is a concealed canonical algebra if and only if $\Gamma_A$ admits a separating family ${\mathcal T}^A$ of stable tubes.
The concealed canonical algebras form a distinguished class of \textit{quasitilted algebras}, which are the endomorphism algebras $\End_{\mathscr H}(T)$ of tilting
objects $T$ in abelian hereditary $K$-categories ${\mathscr H}$ \cite{HRS}. By a result due to  Happel, Reiten and Smal{\o} proved in \cite{HRS}, an algebra $A$ is
a quasitilted algebra if and only if $\gd A\leq 2$ and every module $X$ in $\ind A$ satisfies $\pd_AX\leq 1$ or $\id_AX\leq 1$. Further, it has been
proved by Happel and Reiten in \cite{HRe} that the class of quasitilted algebras consists of the \textit{tilted algebras} (the endomorphism algebras
$\End_{H}(T)$ of tilting modules $T$ over hereditary algebras $H$) and the \textit{quasitilted algebras of canonical type} (the endomorphism
algebras $\End_{\mathscr H}(T)$ of tilting objects $T$ in abelian hereditary categories ${\mathscr H}$ whose derived category $D^b({\mathscr H})$
is equivalent to the derived category $D^b(\mo\Lambda)$ of the module category $\mo\Lambda$ of a canonical algebra $\Lambda$). Moreover, it has been
proved by Lenzing and Skowro\'nski in \cite{LS} (see also \cite{Sk10}) that an algebra $A$ is a quasitilted algebra of canonical type if and only if
$\Gamma_A$ admits a separating family ${\mathcal T}^{A}$ of semiregular tubes (ray and coray tubes), and if and only if $A$ is a semiregular branch
enlargement of a concealed canonical algebra $C$. We are now in position to introduce the class of generalized multicoil algebras \cite{MS2}, being
sophisticated gluings of quasitilted algebras of canonical type, playing the fundamental role in first main result of the paper. It has been proved
by Malicki and Skowro\'nski in \cite{MS2} that the Auslander-Reiten quiver $\Gamma_A$ of an algebra $A$ admits a separating family of almost cyclic
coherent components if and only if $A$ is a \textit{generalized multicoil algebra}, that is, a generalized multicoil enlargement of a product
$C=C_1\times\ldots\times C_m$ of concealed canonical algebras $C_1, \ldots, C_m$ using modules from the separating families
${\mathcal T}^{C_1}, \ldots, {\mathcal T}^{C_m}$ of stable tubes of $\Gamma_{C_1}, \ldots, \Gamma_{C_m}$ and a sequence of admissible operations of types
(ad~1)-(ad~5) and their duals (ad~1$^*$)-(ad~5$^*$). For a generalized multicoil algebra $A$, there is a unique quotient algebra $A^{(l)}$ of $A$ which
is a product of quasitilted algebras of canonical type having separating families of coray tubes (the \textit{left quasitilted algebra} of $A$) and
a unique quotient algebra $A^{(r)}$ of $A$ which is a product of quasitilted algebras of canonical type having separating families of ray tubes
(the \textit{right quasitilted algebra} of $A$) such that $\Gamma_A$ has a disjoint union decomposition (see \cite[Theorems C and E]{MS2})
$$\Gamma_A={\mathcal P}^A \cup {\mathcal C}^A \cup {\mathcal Q}^A,$$
where
\begin{itemize}
\item ${\mathcal P}^A$ is the left part ${\mathcal P}^{A^{(l)}}$ in a decomposition
$\Gamma_{A^{(l)}}={\mathcal P}^{A^{(l)}}\cup {\mathcal T}^{A^{(l)}}\cup {\mathcal Q}^{A^{(l)}}$ of the Auslander-Reiten quiver $\Gamma_{A^{(l)}}$ of
the left quasitilted algebra $A^{(l)}$ of $A$, with ${\mathcal T}^{A^{(l)}}$ a family of coray tubes separating ${\mathcal P}^{A^{(l)}}$ from
${\mathcal Q}^{A^{(l)}}$;
\item ${\mathcal Q}^A$ is the right part ${\mathcal Q}^{A^{(r)}}$ in a decomposition
$\Gamma_{A^{(r)}}={\mathcal P}^{A^{(r)}}\cup {\mathcal T}^{A^{(r)}}\cup {\mathcal Q}^{A^{(r)}}$ of the Auslander-Reiten quiver $\Gamma_{A^{(r)}}$ of
the right quasitilted algebra $A^{(r)}$ of $A$, with ${\mathcal T}^{A^{(r)}}$ a family of ray tubes separating ${\mathcal P}^{A^{(r)}}$ from
${\mathcal Q}^{A^{(r)}}$;
\item ${\mathcal C}^A$ is a family of generalized multicoils separating ${\mathcal P}^A$ from ${\mathcal Q}^A$, obtained from stable tubes in the
separating families ${\mathcal T}^{C_1}, \ldots, {\mathcal T}^{C_m}$ of stable tubes of the Auslander-Reiten quivers $\Gamma_{C_1}, \ldots, \Gamma_{C_m}$
of the concealed canonical algebras $C_1, \ldots, C_m$ by a sequence of admissible operations of types (ad~1)-(ad~5) and their duals (ad~1$^*$)-(ad~5$^*$),
corresponding to the admissible operations leading from $C=C_1\times\ldots\times C_m$ to $A$;
\item ${\mathcal C}^A$ consists of cycle-finite modules and contains all indecomposable modules of ${\mathcal T}^{A^{(l)}}$ and ${\mathcal T}^{A^{(r)}}$;
\item ${\mathcal P}^A$ contains all indecomposable modules of ${\mathcal P}^{A^{(r)}}$;
\item ${\mathcal Q}^A$ contains all indecomposable modules of ${\mathcal Q}^{A^{(l)}}$.
\end{itemize}
Moreover, in the above notation, we have
\begin{itemize}
\item $\gd A\leq 3$;
\item $\pd_AX\leq 1$ for any indecomposable module $X$ in ${\mathcal P}^A$;
\item $\id_AY\leq 1$ for any indecomposable module $Y$ in ${\mathcal Q}^A$;
\item $\pd_AM\leq 2$ and $\id_AM\leq 2$ for any indecomposable module $M$ in ${\mathcal C}^A$.
\end{itemize}
A generalized multicoil algebra $A$ is said to be \textit{tame} if $A^{(l)}$ and $A^{(r)}$ are products of tilted algebras of Euclidean types or tubular algebras.
We also note that every tame generalized multicoil algebra is a cycle-finite algebra.

The following theorem is the first main result of the paper.
\begin{thm} \label{thm1}
Let $A$ be an algebra and $\Gamma$ be a cycle-finite infinite component of $_c\Gamma_A$. Then there exist infinite
full translation subquivers $\Gamma_1, \ldots, \Gamma_r$ of $\Gamma$ such that the following statements hold.
\begin{enumerate}
\renewcommand{\labelenumi}{\rm(\roman{enumi})}
\item For each $i\in\{1,\ldots,r\}$, $\Gamma_i$ is a cyclic coherent full translation subquiver of $\Gamma_A$.
\item For each $i\in\{1,\ldots,r\}$, $\su(\Gamma_i)=B(\Gamma_i)$ and is a generalized multicoil algebra.
\item $\Gamma_1, \ldots, \Gamma_r$ are pairwise disjoint full translation subquivers of $\Gamma$ and
$\Gamma^{cc}=\Gamma_1\cup\ldots\cup\Gamma_r$ is a maximal cyclic coherent and cofinite full translation subquiver of $\Gamma$.
\item $B(\Gamma\setminus\Gamma^{cc})$ is of finite representation type.
\item $\su(\Gamma)=B(\Gamma)$.
\end{enumerate}
\end{thm}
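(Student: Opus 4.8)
The plan is to analyze the infinite cyclic component $\Gamma$ by first isolating its maximal cyclic coherent part and then treating the complement as a finite piece.

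First I would invoke the structure theory for cyclic components developed in Section~\ref{cyccomp}: since $\Gamma$ is a cycle-finite infinite component of $_c\Gamma_A$, every module in $\Gamma$ lies on an oriented cycle, and all homomorphisms realizing such cycles lie outside $\rad_A^{\infty}$. The key first step is to show that $\Gamma$ has a cofinite cyclic coherent full translation subquiver. I expect this to follow from the finiteness of $\tau_A$-orbits of non-coherent behaviour inside a cycle-finite cyclic component, together with the characterization from \cite{MS1} that almost cyclic coherent components are exactly generalized multicoils. Concretely, I would identify $\Gamma^{cc}$ as the union of all vertices of $\Gamma$ lying on sectional paths witnessing coherence, and prove that $\Gamma\setminus\Gamma^{cc}$ is finite; this gives (iii) and sets up (iv).

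Next I would decompose $\Gamma^{cc}$ into its connected components $\Gamma_1,\ldots,\Gamma_r$. Each $\Gamma_i$ is, by construction, an infinite full translation subquiver of $\Gamma_A$ which is cyclic and coherent, hence by \cite{MS1} it is a generalized multicoil, giving (i). For (ii), the point is that $\Gamma_i$ is a component of $_c\Gamma_A$ restricted to its coherent part, so it is faithfully embedded: I would show $\su(\Gamma_i)=B(\Gamma_i)$ by checking that the support idempotent $e_{\Gamma_i}$ and the annihilator ideal $\ann_A(\Gamma_i)$ cut out the same quotient, using that a generalized multicoil is a faithful separating family over its generalized multicoil algebra (the structure theorem from \cite{MS2} quoted in the excerpt). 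Then $\su(\Gamma_i)=B(\Gamma_i)$ is precisely a generalized multicoil algebra. For (iv), $B(\Gamma\setminus\Gamma^{cc})$ is the faithful algebra of a finite set of cycle-finite modules with no infinite coherent behaviour, and a finiteness argument (finitely many modules, each with finite length, generating a finite-dimensional quotient with only finite cycles) forces finite representation type.

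Finally, for (v) — the stated identity $\su(\Gamma)=B(\Gamma)$ — I would argue that the composition factors appearing in $\Gamma$ are exactly those appearing across the $\Gamma_i$ and the finite part, so $P_\Gamma/\rad P_\Gamma$ matches, and then show that the two-sided ideal $t_A(\Gamma)$ generated by images of maps out of $Q_\Gamma$ coincides with $\ann_A(\Gamma)$; one inclusion is formal, and the reverse uses that each $\su(\Gamma_i)=B(\Gamma_i)$ together with faithfulness of the separating families glues up to a faithful module over $\su(\Gamma)$. The main obstacle I anticipate is the first step — proving that $\Gamma\setminus\Gamma^{cc}$ is genuinely finite, i.e.\ controlling how projectives and injectives can be inserted into an infinite cyclic component without destroying coherence on a cofinite part; this is where the cycle-finiteness hypothesis must be used in an essential way, presumably via the results of \cite{MS1} on the shapes of cyclic components and the bound on directing $\tau_A$-orbits.
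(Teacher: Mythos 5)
Your overall plan (isolate a cofinite cyclic coherent part $\Gamma^{cc}$, decompose it into pieces governed by generalized multicoil algebras, treat the complement as a finite piece, then glue for faithfulness) matches the shape of the paper's proof, but three of your steps have genuine gaps. First, the cofiniteness of $\Gamma^{cc}$ is not derived: you assert that it ``should follow'' from finiteness of non-coherent behaviour and yourself name it as the main obstacle, but this is precisely the hard core of the argument. In the paper it is obtained in two stages: one first uses the structure theorems of Zhang and Liu for stable and semiregular translation quivers (via Corollary \ref{cor28} and a three-case analysis of a component $\Sigma$ of ${}_l\mathcal{C}$ or ${}_r\mathcal{C}$ containing an oriented cycle) to produce cyclic coherent subquivers of $\Gamma$ swallowing the cyclic part of $\Sigma$; then, assuming $\Gamma\setminus\Gamma^{cc}$ infinite, Proposition \ref{prop23} (whose proof needs the path-construction argument of \cite{CS}, \cite{Sk4} and cycle-finiteness to rule out acyclic ${\Bbb Z}\Delta$-shapes) produces a further cyclic coherent subquiver disjoint from $\Gamma_1,\ldots,\Gamma_r$, contradicting maximality. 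Nothing in your sketch replaces this mechanism. Second, in (ii) you invoke the [MS2] structure theorem as if it applied directly, but to conclude that $\su(\Gamma_i)$ is a generalized multicoil algebra one must first prove that $\Gamma_i$ is generalized standard and admits no external short paths over its support algebra; this is exactly where cycle-finiteness enters (an infinite radical map between two modules of $\Gamma_i$ would close up, via Proposition \ref{prop21}, to an infinite cycle). Without this step the claim is false in general: as recalled after Theorem \ref{thm1}, there exist faithful generalized standard stable tubes (cyclic, coherent) whose ambient algebras are essentially arbitrary, so cyclicity plus coherence alone cannot yield the multicoil structure.

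Third, your argument for (iv) does not work: finitely many modules of finite length always generate a well-defined quotient $B(\Gamma\setminus\Gamma^{cc})$, but faithfulness of a finite module never forces finite representation type (every algebra is faithful over itself), and ``only finite cycles'' does not either, since cycle-finite algebras can be representation-infinite. The paper's proof of (iv) is genuinely structural: using the left and right borders $\Delta_l^{(i)}$, $\Delta_r^{(i)}$ of the generalized multicoils $\Gamma_i^*$ (which are unions of finite sectional paths whose faithful algebras are products of tilted algebras of equioriented Dynkin type ${\Bbb A}_n$), one shows by a projective-cover and factorization argument that every indecomposable $B(\Gamma\setminus\Gamma^{cc})$-module not lying in $\Gamma\setminus\Gamma^{cc}$ is a module over the representation-finite product $B(\Delta_l^{(1)})\times\cdots\times B(\Delta_r^{(r)})$. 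Your step (v) is in the right spirit (the paper likewise reduces to exhibiting a monomorphism from the regular module into a sum of modules of $\Gamma$, using $\su(\Gamma_i)=B(\Gamma_i)$ for the projectives outside $\Gamma$), but as written it depends on the unproven points above, so the proposal as a whole does not yet constitute a proof.
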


It follows from the above theorem that all but finitely many modules lying in an infinite cycle-finite component $\Gamma$
of $_c\Gamma_A$ can be obtained from indecomposable modules in stable tubes of concealed canonical algebras by a finite sequence
of admissible operations of types (ad~1)-(ad~5) and their duals (ad~1$^*$)-(ad~5$^*$) (see \cite[Section 3]{MS2} for details). We refer also
to \cite{KS} and \cite{Sk5} for some results on the composition factors of indecomposable modules lying in stable tubes of the Auslander-Reiten
quivers of concealed canonical algebras, and to \cite{MST} for the structure of indecomposable modules lying in coils.
We would like to stress that the cycle-finiteness assumption imposed on the infinite component $\Gamma$ of $_c\Gamma_A$ is essential for the validity
of the above theorem. Namely, it has been proved in \cite{Sk11}, \cite{Sk12} that, for an arbitrary finite dimensional algebra $B$ over a field
$K$, a module $M$ in $\mo B$, and a positive integer $r$, there exists a finite dimensional algebra $A$ over $K$ such that $B$ is a quotient
algebra of $A$, $\Gamma_A$ admits a faithful generalized standard stable tube $\mathcal T$ of rank $r$, $\mathcal T$ is not cycle-finite, and $M$
is a subfactor of all but finitely many indecomposable modules in $\mathcal T$. This shows that in general the problem of describing the support
algebras of infinite cyclic components (even stable tubes) of Auslander-Reiten quivers is difficult.

In order to present the second main result of the paper, we need the class of generalized double tilted algebras introduced by Reiten and Skowro\'nski
in \cite{RS2} (see also \cite{AC}, \cite{CL} and \cite{RS1}). A \textit{generalized double tilted algebra} is an algebra $B$ for which $\Gamma_B$ admits a separating
almost acyclic component $\mathcal C$.

For a generalized double tilted algebra $B$, the Auslander-Reiten quiver $\Gamma_B$ has a disjoint union decomposition (see \cite[Section 3]{RS2})
$$\Gamma_B={\mathcal P}^B \cup {\mathcal C}^B \cup {\mathcal Q}^B,$$
where
\begin{itemize}
\item ${\mathcal C}^B$ is an almost acyclic component separating ${\mathcal P}^B$ from ${\mathcal Q}^B$, called a \textit{connecting component} of $\Gamma_B$;
\item There exist hereditary algebras $H_1^{(l)}, \ldots, H_m^{(l)}$ and tilting modules $T_1^{(l)}\in\mo H_1^{(l)}, \ldots,$ $T_m^{(l)}\in\mo H_m^{(l)}$
such that the tilted algebras
$B_1^{(l)}=\End_{H_1^{(l)}}(T_1^{(l)}), \ldots, B_m^{(l)}=\End_{H_m^{(l)}}(T_m^{(l)})$ are quotient algebras of $B$ and ${\mathcal P}^B$ is the
disjoint union of all components of $\Gamma_{B_1^{(l)}}, \ldots, \Gamma_{B_m^{(l)}}$ contained entirely in the torsion-free parts
${\mathscr Y}(T_1^{(l)}), \ldots,$ ${\mathscr Y}(T_m^{(l)})$ of $\mo B_1^{(l)}, \ldots,$ $\mo B_m^{(l)}$ determined by $T_1^{(l)}, \ldots, T_m^{(l)}$;
\item There exist hereditary algebras $H_1^{(r)}, \ldots, H_n^{(r)}$ and tilting modules
$T_1^{(r)}\in\mo H_1^{(r)}, \ldots,$ $T_n^{(r)}\in\mo H_n^{(r)}$ such that the tilted algebras
$B_1^{(r)}=\End_{H_1^{(r)}}(T_1^{(r)}), \ldots, B_n^{(r)}=\End_{H_n^{(r)}}(T_n^{(r)})$ are quotient algebras of $B$ and ${\mathcal Q}^B$ is the
disjoint union of all components of $\Gamma_{B_1^{(r)}}, \ldots, \Gamma_{B_n^{(r)}}$ contained entirely in the torsion parts
${\mathscr X}(T_1^{(r)}), \ldots,$ ${\mathscr X}(T_n^{(r)})$ of $\mo B_1^{(r)}, \ldots,$ $\mo B_n^{(r)}$ determined by $T_1^{(r)}, \ldots, T_n^{(r)}$;
\item every indecomposable module in ${\mathcal C}^B$ not lying in the core $c({\mathcal C}^B)$ of ${\mathcal C}^B$ is an indecomposable module
over one of the tilted algebras $B_1^{(l)}, \ldots, B_m^{(l)}$, $B_1^{(r)}, \ldots,$ $B_n^{(r)}$;
\item every nondirecting indecomposable module in ${\mathcal C}^B$ is cycle-finite and lies in $c({\mathcal C}^B)$;
\item $\pd_BX\leq 1$ for all indecomposable modules $X$ in ${\mathcal P}^B$;
\item $\id_BY\leq 1$ for all indecomposable modules $Y$ in ${\mathcal Q}^B$;
\item for all but finitely many indecomposable modules $M$ in ${\mathcal C}^B$, we have $\pd_BM\leq 1$ or $\id_BM\leq 1$.
\end{itemize}
Then $B^{(l)}=B_1^{(l)}\times\ldots\times B_m^{(l)}$ is called the \textit{left tilted algebra} of $B$ and
$B^{(r)}=B_1^{(r)}\times\ldots\times B_n^{(r)}$ is called the \textit{right tilted algebra} of $B$.
We note that the class of algebras of finite representation type coincides with the class of generalized double tilted algebras $B$ with $\Gamma_B$
being the connecting component ${\mathcal C}^B$ (equivalently, with the tilted algebras $B^{(l)}$ and $B^{(r)}$ being of finite representation type (possibly empty)).
Finally, a generalized double tilted algebra is said to be \textit{tame} if the tilted algebras $B^{(l)}$ and $B^{(r)}$ are generically tame in
the sense of Crawley-Boevey \cite{CB1}, \cite{CB2}. We note that every tame
generalized double tilted algebra is a cycle-finite algebra. We would like to mention that there exist generalized double tilted algebras of infinite
representation type of arbitrary global dimension $d\in\mathbb{N}\cup\{\infty\}$.
We refer also to \cite{JMS1}, \cite{Li3}, \cite{Sk2} for useful characterizations of tilted algebras.

The following theorem is the second main result of the paper.
\begin{thm} \label{thm2}
Let $A$ be an algebra and $\Gamma$ be a cycle-finite finite component of $_c\Gamma_A$. Then the following statements hold.
\begin{enumerate}
\renewcommand{\labelenumi}{\rm(\roman{enumi})}
\item $\su(\Gamma)$ is a generalized double tilted algebra.
\item $\Gamma$ is the core $c({\mathcal C}^{B(\Gamma)})$ of a unique almost acyclic connecting component ${\mathcal C}^{B(\Gamma)}$  of $\Gamma_{B(\Gamma)}$.
\item $\su(\Gamma)=B(\Gamma)$.
\end{enumerate}
\end{thm}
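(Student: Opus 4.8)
The plan is to prove (i) and (ii) simultaneously, by exhibiting for $B=\su(\Gamma)$ a separating almost acyclic component of $\Gamma_B$ whose core is $\Gamma$ and then invoking the Reiten--Skowro\'nski characterization of generalized double tilted algebras quoted in Section~\ref{main}; statement (iii) will then be extracted by a faithfulness argument. First I would pass from $A$ to its support algebra $B=\su(\Gamma)=A/t_A(\Gamma)$. Using the properties of cyclic components and their support algebras developed in Section~\ref{cyccomp}, the indecomposable modules lying in $\Gamma$ remain indecomposable over $B$, the irreducible homomorphisms between them in $\mo A$ stay irreducible in $\mo B$, and $\Gamma$ is again a finite cyclic component of $\Gamma_B$; moreover these modules are still cycle-finite over $B$, and $B\cong e_{\Gamma}Ae_{\Gamma}$ for an idempotent $e_{\Gamma}$ attached to a convex subquiver of $Q_A$. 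Hence it suffices to prove (i)--(iii) with $A$ replaced by $B$, i.e.\ under the extra hypothesis that $\Gamma$ is sincere.

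Next I would locate the relevant component. Since $\Gamma$ is connected and its irreducible maps survive in $\mo B$, all of $\Gamma$ lies in one component $\mathcal C$ of $\Gamma_B$, and by the results of Section~\ref{cyccomp} the set of vertices of $\mathcal C$ lying on oriented cycles in $\Gamma_B$ is precisely $\Gamma$. As $\Gamma$ is finite and convex (a general feature of cyclic components), $\mathcal C$ is almost acyclic with core $c(\mathcal C)=\Gamma$, so $\mathcal C$ admits a multisection by \cite{RS2}. It then remains to verify that $\mathcal C$ is separating, that is, conditions (S1)--(S3) for a suitable splitting $\Gamma_B=\mathcal P^B\cup\mathcal C\cup\mathcal Q^B$. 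Sincerity of $\mathcal C$ is immediate once one knows that the composition factors of the modules in $\mathcal C\setminus\Gamma$ are again composition factors of $\Gamma$ (these modules are directing and lie over proper tilted quotients of $B$, whose simples are among the simples of $B$). The generalized standardness $\rad_B^{\infty}(X,Y)=0$ for $X,Y\in\mathcal C$, together with the orthogonality relations $\Hom_B(\mathcal Q^B,\mathcal P^B)=\Hom_B(\mathcal Q^B,\mathcal C)=\Hom_B(\mathcal C,\mathcal P^B)=0$ and the factorization property (S3), is where the cycle-finiteness hypothesis is used decisively: because every cycle through a module of $\Gamma$ is finite and $\Gamma$ is a \emph{full} cyclic component of $\Gamma_B$, no indecomposable module outside $\mathcal C$ can lie on a cycle together with one inside, which forces the infinite radical to vanish on $\mathcal C$ and the homomorphisms between the three prescribed families to behave as required.

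Once $\mathcal C$ is shown to be a separating almost acyclic component of $\Gamma_B$, the Reiten--Skowro\'nski theorem gives at once that $B=\su(\Gamma)$ is a generalized double tilted algebra and that $\mathcal C=\mathcal C^{B}$ is its unique connecting component; since $c(\mathcal C)=\Gamma$, this yields (i) and (ii). Finally, for (iii) one always has $t_A(\Gamma)\subseteq\ann_A(\Gamma)$, hence a surjection $\su(\Gamma)\twoheadrightarrow B(\Gamma)$, and it remains to see that $\add\,\Gamma$ is a faithful family over $B=\su(\Gamma)$. Since the connecting component $\mathcal C$ is faithful (being separating), this reduces to showing that every indecomposable $B$-module in $\mathcal C$ is annihilated by $\ann_B(\Gamma)$; using the generalized double tilted structure, each module of $\mathcal C\setminus\Gamma$ lies over one of the tilted algebras $B^{(l)}_j$, $B^{(r)}_j$, and one shows that the quotient maps $B\to B^{(l)}_j$, $B\to B^{(r)}_j$ factor through $B(\Gamma)$ — equivalently that these tilted algebras are already quotients of $B(\Gamma)$ — whence $\ann_B(\Gamma)=\ann_B(\mathcal C)=0$ and $\su(\Gamma)=B(\Gamma)$.

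I expect the hardest part of the whole argument to be the verification that $\mathcal C$ is separating, and in particular the generalized standardness together with the $\Hom$-orthogonality in (S2)--(S3): this is exactly the point where the purely combinatorial cycle-finiteness hypothesis must be converted into the vanishing of $\rad_B^{\infty}$ on $\mathcal C$ and into control of all homomorphisms into and out of the finite cyclic component $\Gamma$. The reduction step of Section~\ref{cyccomp} identifying the cyclic vertices of $\mathcal C$ with $\Gamma$, and the final factorization of the tilted quotients through $B(\Gamma)$, are the other places where some care will be needed.
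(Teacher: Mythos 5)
Your overall strategy (pass to $B=\su(\Gamma)$, find the component $\mathcal C$ containing $\Gamma$, show it is a separating almost acyclic component with core $\Gamma$, then deduce (iii) from faithfulness of $\mathcal C$) is reasonable in outline, but at the three decisive points you assert rather than prove, and in each case the assertion is exactly where the work lies. First, the claim that ``the set of vertices of $\mathcal C$ lying on oriented cycles in $\Gamma_B$ is precisely $\Gamma$'' is not a general feature of cyclic components: a single Auslander--Reiten component may contain several distinct connected cyclic components (the paper's own Example in Section \ref{exs-inf} exhibits a component whose cyclic part splits into an infinite $\Gamma$ and a finite $\Gamma'$). That $c(\mathcal C)=\Gamma$ over $B$ has to be proved, and the paper does so using the sincerity of $\Gamma$ over $B$ together with cycle-finiteness: any module $X$ of the core admits nonsectional paths to a projective and from an injective, sincerity produces modules $Y,Z\in\Gamma$ with $\Hom_B(P,Y)\neq 0$, $\Hom_B(Z,I)\neq 0$, and the resulting cycle through $X,Y,Z$ is finite, forcing $X\in\Gamma$ by Proposition \ref{prop21}. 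The same goes for almost acyclicity of $\mathcal C$, which you take for granted.

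Second, the verification that $\mathcal C$ is separating is not carried out: the sentence ``no indecomposable module outside $\mathcal C$ can lie on a cycle together with one inside, which forces the infinite radical to vanish on $\mathcal C$ and the homomorphisms between the three prescribed families to behave as required'' proves neither generalized standardness for pairs involving the acyclic modules of $\mathcal C$ (there is no return path to close a cycle, so cycle-finiteness of $\Gamma$ gives no contradiction), nor the orthogonality relations, nor the factorization property (S3). The paper deliberately avoids checking the definition directly: it shows that every path in $\ind B$ from an indecomposable injective to an indecomposable projective module consists entirely of modules of the finite set $\Gamma$ (again via sincerity plus finiteness of cycles), applies the criterion of \cite{Sk11-5} to conclude that $B$ is quasitilted or generalized double tilted, and excludes the strictly quasitilted case because such algebras have only semiregular components (\cite{CS}) while $\Gamma$ lies in a component containing a path from an injective to a projective. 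Third, your argument for (iii) is circular at the key step: asserting that the tilted quotients $B^{(l)}_j$, $B^{(r)}_j$ ``factor through $B(\Gamma)$'' is equivalent to $\ann_B(\Gamma)=0$, which is precisely what (iii) claims, and you offer no independent proof. In the paper this is the longest part of the argument: one decomposes $B=P^{(l)}\oplus P^{(c)}$, shows that each module of $\Gamma$ receiving an arrow from $\Delta_l$ admits a nonzero homomorphism from a projective in $\Gamma$ (via Proposition \ref{prop23n} and the Bautista--Smal{\o} sectional-path theorem), deduces that the irreducible maps from $\Delta_l$ into $\Gamma$ are monomorphisms, forms the module $M^{(l)}$ from the border modules, proves it is sincere over $B^{(l)}$ and not the middle of a short chain, and invokes \cite[Corollary 3.2]{RSS} to get faithfulness of $M^{(l)}$, hence of $M_\Gamma$ over $B$. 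Without arguments of this kind your proposal does not yet constitute a proof of any of (i)--(iii).
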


We would like to point that every finite cyclic component $\Gamma$ of an Auslander-Reiten quiver $\Gamma_A$ contains both a projective module and
an injective module (see Corollary \ref{cor25}), and hence $\Gamma_A$ admits at most finitely many finite cyclic components.
We refer also to \cite{LaSm}, \cite{Sm1}, \cite{Sm2} for some results concerning double tilted algebras with connecting components containing
nondirecting indecomposable modules.

An idempotent $e$ of an algebra $A$ is said to be \textit{convex} provided $e$ is a sum of pairwise orthogonal primitive idempotents of $A$
corresponding to the vertices of a convex valued subquiver of the quiver $Q_A$ of $A$ (see Section 2 for definition). The following direct
consequence of Theorems \ref{thm1}, \ref{thm2} and Propositions \ref{prop22}, \ref{prop23nn} provides a handy description of the faithful
algebra of a cycle-finite component of $_c\Gamma_A$.

\begin{cor} \label{cor3}
Let $A$ be an algebra and $\Gamma$ be a cycle-finite component of $_c\Gamma_A$. Then there exists
a convex idempotent $e_{\Gamma}$ of $A$ such that $\su(\Gamma)$ is isomorphic to the algebra $e_{\Gamma}Ae_{\Gamma}$.
\end{cor}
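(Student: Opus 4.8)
The plan is to reduce the assertion to a single convexity statement about the support of $\Gamma$ in the quiver $Q_A$, and then to extract that statement from Theorems~\ref{thm1} and~\ref{thm2} (and the preparatory Propositions~\ref{prop22} and~\ref{prop23nn}), treating the infinite and finite cases separately only at the last step.

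First I would unwind the definition of $\su(\Gamma)$. Write $A=P_\Gamma\oplus Q_\Gamma$ as in the introduction, let $e_\Gamma$ be the idempotent of $A$ with $e_\Gamma A=P_\Gamma$, and write $\supp(\Gamma)$ for the set of primitive summands of $e_\Gamma$, i.e.\ the vertices of $Q_A$ occurring as simple composition factors of modules in $\Gamma$. Unwinding the definition of $t_A(\Gamma)$, the images of the homomorphisms $(1-e_\Gamma)A=Q_\Gamma\to A$ generate the two-sided ideal $A(1-e_\Gamma)A$, so $t_A(\Gamma)=A(1-e_\Gamma)A$ and $\su(\Gamma)=A/A(1-e_\Gamma)A$. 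Since $1-e_\Gamma\in A(1-e_\Gamma)A$, the image of $e_\Gamma$ in $\su(\Gamma)$ is the identity, and the canonical epimorphism $A\to\su(\Gamma)$ restricts to an epimorphism $e_\Gamma A e_\Gamma\to\su(\Gamma)$ with kernel $e_\Gamma A e_\Gamma\cap A(1-e_\Gamma)A=e_\Gamma A(1-e_\Gamma)A e_\Gamma$. Hence
\[
\su(\Gamma)\;\cong\;e_\Gamma A e_\Gamma\,\big/\,e_\Gamma A(1-e_\Gamma)A e_\Gamma .
\]
As $A$ is a quotient of the tensor algebra of the valued quiver $Q_A$, the ideal $e_\Gamma A(1-e_\Gamma)A e_\Gamma$ is spanned over $K$ by the classes of paths in $Q_A$ that begin and end in $\supp(\Gamma)$ but pass through a vertex outside $\supp(\Gamma)$; thus $e_\Gamma A(1-e_\Gamma)A e_\Gamma=0$ precisely when $\supp(\Gamma)$ spans a convex subquiver of $Q_A$, i.e.\ when $e_\Gamma$ is a convex idempotent. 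So the corollary reduces to the assertion that $e_\Gamma$ is convex, and then the displayed isomorphism reads $\su(\Gamma)\cong e_\Gamma A e_\Gamma$.

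It remains to establish convexity of $e_\Gamma$, and this is where the structure theorems enter. If $\Gamma$ is infinite, Theorem~\ref{thm1}(v) gives $\su(\Gamma)=B(\Gamma)$, while Theorem~\ref{thm1}(i)--(iv) presents $\Gamma$ as a gluing of the cyclic coherent full subquivers $\Gamma_1,\ldots,\Gamma_r$, with $\Gamma^{cc}=\Gamma_1\cup\cdots\cup\Gamma_r$ cofinite in $\Gamma$, each $\su(\Gamma_i)=B(\Gamma_i)$ a generalized multicoil algebra, and $B(\Gamma\setminus\Gamma^{cc})$ of finite representation type. Applying Proposition~\ref{prop22} to the cyclic coherent components $\Gamma_i$ gives that each $e_{\Gamma_i}$ is convex with $\su(\Gamma_i)\cong e_{\Gamma_i}Ae_{\Gamma_i}$, and since $\supp(\Gamma)$ is the union of the sets $\supp(\Gamma_i)$ together with the finite support of $B(\Gamma\setminus\Gamma^{cc})$, while the gluings relating these pieces inside $\Gamma_A$ occur along finite, convex translation subquivers, the convexity of $\supp(\Gamma)$ in $Q_A$ is inherited from that of the pieces — the separating decomposition $\Gamma_{B(\Gamma)}={\mathcal P}^{B(\Gamma)}\cup{\mathcal C}^{B(\Gamma)}\cup{\mathcal Q}^{B(\Gamma)}$ of the generalized multicoil algebra $B(\Gamma)$ being what forbids a path of $Q_A$ from leaving $\supp(\Gamma)$ and returning to it. If $\Gamma$ is finite, Theorem~\ref{thm2}(iii) gives $\su(\Gamma)=B(\Gamma)$, and Theorem~\ref{thm2}(i)--(ii) together with Proposition~\ref{prop23nn} exhibit $B(\Gamma)$ as a generalized double tilted algebra with $\Gamma$ the core of its connecting component and with $e_\Gamma$ convex in $Q_A$. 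In both cases $e_\Gamma$ is convex, which completes the proof.

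The reduction to convexity and the final assembly are formal; the genuine obstacle, and the reason this is a corollary of the two main theorems rather than an independent argument, is the convexity itself. Establishing it requires ruling out every arrow of $Q_A$ that would let a path re-enter $\supp(\Gamma)$ after leaving it, which in turn means controlling all homomorphisms in $\mo A$ between the modules of $\Gamma$ and the indecomposables lying outside $\su(\Gamma)$ — precisely the information packaged in the separating families of the generalized multicoil, respectively generalized double tilted, algebra $\su(\Gamma)=B(\Gamma)$, together with Propositions~\ref{prop22} and~\ref{prop23nn}.
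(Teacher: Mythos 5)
Your reduction step is sound in substance: with $e=e_{\Gamma}$ one has $t_A(\Gamma)=A(1-e)A$, the induced surjection $eAe\to\su(\Gamma)$ has kernel $eA(1-e)Ae$, and this kernel vanishes once $e$ is convex, since $e_iAe_j\neq 0$ and $e_jAe_k\neq 0$ with $i,k\in\supp(\Gamma)$, $j\notin\supp(\Gamma)$ force (these spaces lying in $\rad A$ because $A$ is basic) paths in $Q_A$ from $i$ to $j$ and from $j$ to $k$, contradicting convexity. Two caveats: for an artin algebra over a commutative artin ring you cannot simply say ``$A$ is a quotient of the tensor algebra of $Q_A$'', so the kernel computation should be run through the radical filtration as just indicated; and your ``precisely when'' is false in the converse direction (relations may annihilate all paths through an outside vertex), though you only use the implication you need. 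This part of your argument is the analogue of Proposition \ref{prop23nn}, which the paper proves by identifying $\su(\Gamma)$ with $\End_A(P_{\Gamma})\cong e_{\Gamma}Ae_{\Gamma}$, again using convexity to kill the kernel.

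The genuine gap is the convexity of $\supp(\Gamma)$, which you rightly call the crux but do not actually prove. In the infinite case you apply Proposition \ref{prop22} to the subquivers $\Gamma_i$; these are full translation subquivers of $\Gamma$, not components of ${}_c\Gamma_A$, so the proposition does not apply to them as stated, and even granting convexity of each $\supp(\Gamma_i)$, a union of convex subcategories need not be convex, so nothing is ``inherited''. The appeal to the separating decomposition of $\Gamma_{B(\Gamma)}$ cannot close this: that decomposition lives over the quotient algebra $B(\Gamma)$ and says nothing about paths of $Q_A$ running through vertices outside $\supp(\Gamma)$, because the modules witnessing such a path (for instance the simples at those vertices) are not $B(\Gamma)$-modules at all; ruling such paths out requires control of homomorphisms in $\mo A$, not in $\mo B(\Gamma)$. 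In the finite case you cite Theorem \ref{thm2} and Proposition \ref{prop23nn} as ``exhibiting $e_{\Gamma}$ convex'', but Proposition \ref{prop23nn} uses convexity (via Proposition \ref{prop22}) rather than establishing it. The repair is immediate and is exactly the paper's route: Proposition \ref{prop22} applies verbatim to $\Gamma$ itself, since $\Gamma$ is a cycle-finite component of ${}_c\Gamma_A$, and states precisely that $\supp(\Gamma)$ is convex; its proof is elementary, needing neither Theorem \ref{thm1} nor Theorem \ref{thm2}: if a path of $Q_A$ left and re-entered $\supp(\Gamma)$, the length-two extensions of consecutive simples along it, combined with a path inside $\Gamma$ provided by Proposition \ref{prop21}, would assemble into an infinite cycle through modules of $\Gamma$, contradicting cycle-finiteness. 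So no case distinction is needed for the corollary as stated; Theorems \ref{thm1} and \ref{thm2} enter only to identify $\su(\Gamma)$ with the faithful algebra $B(\Gamma)$.
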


The third main result of the paper is a consequence of Theorems \ref{thm1} and \ref{thm2}, and the results established in \cite[Theorem 1.3]{MS3}.
\begin{thm} \label{thm4}
Let $A$ be an algebra. Then, for all but finitely many isomorphism classes of cycle-finite modules $M$ in $\ind A$,
the following statements hold.
\begin{enumerate}
\renewcommand{\labelenumi}{\rm(\roman{enumi})}
\item $|\Ext_A^1(M,M)|\leq |\End_A(M)|$ and $\Ext_A^r(M,M)=0$ for $r\geq 2$.
\item $|\Ext_A^1(M,M)| = |\End_A(M)|$ if and only if there is a quotient concealed canonical algebra $C$ of $A$ and a stable tube $\mathcal T$ of $\Gamma_C$
such that $M$ is an indecomposable $C$-module in $\mathcal T$ of quasi-length divisible by the rank of $\mathcal T$.
\end{enumerate}
\end{thm}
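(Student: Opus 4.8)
The plan is to derive the statement by combining the structural Theorems~\ref{thm1} and \ref{thm2} with the homological results of \cite[Theorem~1.3]{MS3}: I would first reduce the assertion to a module lying in the separating family of a generalized multicoil algebra, where \cite[Theorem~1.3]{MS3} applies, and then transfer the conclusions back to $A$.

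\emph{Step 1 (reduction to the generalized multicoil case).} A cycle-finite module $M$ in $\ind A$ is nondirecting, hence lies on an oriented cycle of $\Gamma_A$, so it belongs to a unique cycle-finite cyclic component $\Gamma = \Gamma(M)$ of $_c\Gamma_A$. By Corollary~\ref{cor25} every \emph{finite} cyclic component of $_c\Gamma_A$ contains a projective module, so there are only finitely many of them, carrying altogether only finitely many indecomposable modules; these I set aside. For the remaining, infinite, component $\Gamma$, Theorem~\ref{thm1} provides a cofinite cyclic coherent full translation subquiver $\Gamma^{cc} = \Gamma_1\cup\ldots\cup\Gamma_r$ of $\Gamma$; setting aside also the finite set $\Gamma\setminus\Gamma^{cc}$, I may assume $M\in\Gamma_i$ for some $i$. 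Put $B = \su(\Gamma_i) = B(\Gamma_i)$; by Theorem~\ref{thm1}(i)--(ii) this is a generalized multicoil algebra and $\Gamma_i$ is a cyclic coherent full translation subquiver of $\Gamma_A$. Since every module of $\Gamma_i$ has all its composition factors among the simple $B$-modules, it is canonically an indecomposable $B$-module, and $\Gamma_i$ embeds as a cyclic coherent full translation subquiver of $\Gamma_B$; by the description of generalized multicoil algebras recalled in Section~\ref{main}, $M$ then lies in the separating family ${\mathcal C}^B$ of $B$.

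\emph{Step 2 (the generalized multicoil case).} Here I would invoke \cite[Theorem~1.3]{MS3}. As $\pd_B X\leq 2$ and $\id_B X\leq 2$ for every indecomposable $X$ in ${\mathcal C}^B$, one has $\Ext_B^r(X,X) = 0$ for $r\geq 3$ automatically, and \cite[Theorem~1.3]{MS3} yields, for all but finitely many $X$ in ${\mathcal C}^B$, that $\Ext_B^2(X,X) = 0$ and $|\Ext_B^1(X,X)|\leq|\End_B(X)|$, with equality precisely when $X$ is, over one of the concealed canonical algebras $C_1,\ldots,C_m$ underlying $B$, an indecomposable module in a stable tube whose quasi-length is divisible by the rank of that tube. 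The guiding principle is the Auslander--Reiten formula $D\Ext_B^1(X,X)\cong\overline{\Hom}_B(X,\tau_B X)$ together with the generalized standardness of ${\mathcal C}^B$, which forces $\rad_B^{\infty}(X,\tau_B X) = 0$ so that $\Hom_B(X,\tau_B X)$ is computed combinatorially inside the component; in a stable tube of rank $p$ the length of $\End_B(X)$ exceeds that of $\Ext_B^1(X,X)$ by one precisely when the quasi-length of $X$ is not divisible by $p$, and the two lengths coincide when it is, while the finitely many exceptional modules of ${\mathcal C}^B$ are exactly the ones affected by the admissible operations (ad~1)-(ad~5) and their duals. Since the $C_j$ are quotient algebras of $B$ and $B$ is a quotient of $A$, each such $C$ is a quotient concealed canonical algebra of $A$.

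\emph{Step 3 (transfer to $A$ and finiteness count).} By Corollary~\ref{cor3}, which rests on Theorems~\ref{thm1}, \ref{thm2} and Propositions~\ref{prop22}, \ref{prop23nn}, one has $B = B(\Gamma_i) = A/\ann_A(\Gamma_i)\cong e_{\Gamma}Ae_{\Gamma}$ for a convex idempotent $e_{\Gamma}$ of $A$, and every module in $\Gamma_i$ is supported on $e_{\Gamma}$. For $e_{\Gamma}$-supported modules $X,Y$ one has $\End_A(X) = \End_B(X)$ and $\Ext_A^1(X,Y) = \Ext_B^1(X,Y)$, since an $A$-extension of two $e_{\Gamma}$-supported modules is again $e_{\Gamma}$-supported and hence a $B$-extension; and, using the convexity of $e_{\Gamma}$ and the properties of support algebras of cyclic components established in Section~\ref{cyccomp}, one obtains in addition $\Ext_A^r(X,Y)\cong\Ext_B^r(X,Y)$ for $r\geq 2$, so in particular $\Ext_A^r(M,M) = 0$ for $r\geq 2$; the characterization of equality, transported along the analogous convex embeddings of the $C_j$, then descends from $B$ to $A$. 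It remains to count: the modules set aside are those in the finitely many finite cyclic components (each finite), those in the finite sets $\Gamma\setminus\Gamma^{cc}$, and the finitely many modules of each ${\mathcal C}^{B(\Gamma_i)}$ affected by admissible operations; the only cyclic components that can contribute infinitely many modules are stable tubes over quotient concealed canonical algebras of $A$, for which \cite[Theorem~1.3]{MS3} gives (i) and (ii) for every module, so none of their modules is exceptional. Hence the exceptional set is finite and the theorem follows. The substantial input is \cite[Theorem~1.3]{MS3}, which we may take for granted; within the present argument I expect Step~3 — verifying that passing from $A$ to its convex subcategory $e_{\Gamma}Ae_{\Gamma}$ creates no new self-extensions in positive degrees — to be the main technical point, with the finiteness bookkeeping a close second.
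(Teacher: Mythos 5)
Your proposal matches the paper's own treatment of this statement: Theorem \ref{thm4} is presented there precisely as a direct consequence of Theorems \ref{thm1} and \ref{thm2} together with \cite[Theorem 1.3]{MS3}, which is exactly your reduction to the generalized multicoil (and concealed canonical) situation followed by transfer along the support algebra. The technical point you flag in Step 3 is indeed sound: convexity of $e_{\Gamma}$ forces $e_{\Gamma}A(1-e_{\Gamma})Ae_{\Gamma}=0$, so for a $B$-module $X$ the kernel of the comparison between its minimal projective presentations over $A$ and over $B$ (and every syzygy of that kernel) is supported at vertices admitting no path into $\supp(\Gamma)$, which yields $\Ext_A^r(X,Y)\cong\Ext_B^r(X,Y)$ for all $r$ and all $B$-modules $X,Y$; and the finiteness bookkeeping is safe because all but finitely many cycle-finite cyclic components lie in regular components of $\Gamma_A$, hence are generalized standard stable tubes over concealed canonical quotients of $A$, for which statements (i) and (ii) hold for every module.
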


Here, $|V|$ denotes the length of a module $V$ in $\mo K$. In particular, the above theorem shows that, for all but finitely many
isomorphism classes of cycle-finite modules $M$ in a module category $\ind A$, the Euler characteristic
$$\chi_A(M)=\sum_{i=0}^{\infty}(-1)^i|\Ext_A^i(M,M)|$$
of $M$ is well defined and nonnegative. We would like to mention that there are cycle-finite algebras $A$ with indecomposable modules $M$ lying in
infinite cyclic components of $\Gamma_A$ and the Euler characteristic $\chi_A(M)$ being an arbitrary given positive integer (see \cite{PS2}).

Let $A$ be an algebra and $K_0(A)$ the Grothendieck group of $A$. For a module $M$ in $\mo A$, we denote by $[M]$ the image of $M$ in $K_0(A)$.
Then $K_0(A)$ is a free abelian group with a ${\Bbb Z}$-basis given by $[S_1], \ldots, [S_n]$ for a complete family $S_1, \ldots, S_n$ of pairwise
nonisomorphic simple modules in $\mo A$. Thus, for modules $M$ and $N$ in $\mo A$, we have $[M]=[N]$ if and only if the modules $M$ and $N$ have
the same composition factors including the multiplicities. In particular, it would be interesting to find sufficient conditions for a module $M$
in $\ind A$ to be uniquely determined (up to isomorphism) by its composition factors (see \cite{RSS} for a general result in this direction).

The next theorem provides information on the composition factors of cycle-finite modules, and is a direct consequence
of Theorems \ref{thm1}, \ref{thm2}, \ref{thm4} and the results established in \cite[Theorems A and B]{Ma}.
\begin{thm} \label{thm5}
Let $A$ be an algebra. The following statements hold.
\begin{enumerate}
\renewcommand{\labelenumi}{\rm(\roman{enumi})}
\item There is a positive integer $m$ such that, for any cycle-finite module $M$ in $\ind A$ with $|\End_A(M)|$ $\neq |\Ext_A^1(M,M)|$, the number
of isomorphism classes of modules $X$ in $\ind A$ with $[X]=[M]$ is bounded by $m$.
\item For all but finitely many isomorphism classes of cycle-finite modules $M$ in $\ind A$ with $|\End_A(M)|$ $= |\Ext_A^1(M,M)|$, there are infinitely
many pairwise nonisomorphic modules $X$ in $\ind A$ with $[X]=[M]$.
\item The number of isomorphism classes of cycle-finite modules $M$ in $\ind A$ with $\Ext_A^1(M,M)$ $= 0$ is finite.
\end{enumerate}
\end{thm}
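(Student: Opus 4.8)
The plan is to deduce all three assertions by reducing them, via Theorems~\ref{thm1} and~\ref{thm2}, to the corresponding statements for module categories of generalized multicoil algebras, generalized double tilted algebras, and algebras of finite representation type, where they are provided by \cite[Theorems A and B]{Ma}, and then to translate the combinatorial conditions appearing there into the numerical condition $|\End_A(M)|=|\Ext_A^1(M,M)|$ by means of Theorem~\ref{thm4}.

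First I would carry out the reduction to cyclic components. A cycle-finite module $M$ in $\ind A$ is nondirecting and every cycle through it is finite, hence refines to a cycle of irreducible homomorphisms; therefore $M$ lies on an oriented cycle of $\Gamma_A$ and belongs to a uniquely determined cycle-finite cyclic component $\Gamma(M)$ of $_c\Gamma_A$. By Corollary~\ref{cor25} every finite cyclic component of $\Gamma_A$ contains a projective module, so there are only finitely many of them and each is finite; consequently the cycle-finite modules lying in finite cyclic components form a finite set of isomorphism classes, which is negligible for the ``all but finitely many'' clauses of (i) and (ii) and a finite summand in (iii). So I would assume $\Gamma=\Gamma(M)$ infinite. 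Applying Theorem~\ref{thm1} to $\Gamma$, all but finitely many modules of $\Gamma$ lie in $\Gamma^{cc}=\Gamma_1\cup\cdots\cup\Gamma_r$, where each $\Gamma_i$ is a cyclic coherent full translation subquiver of $\Gamma_A$ with $\su(\Gamma_i)=B(\Gamma_i)$ a generalized multicoil algebra, obtained from stable tubes of concealed canonical algebras by admissible operations; by Theorem~\ref{thm1}(iv) the finitely many remaining modules of $\Gamma$ span an algebra of finite representation type, and likewise for each finite cyclic component by Theorem~\ref{thm2}. Thus, up to a finite exceptional set, every cycle-finite module sits in one of the subquivers $\Gamma_i$, which consists of modules over $B(\Gamma_i)$.

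Next I would apply \cite[Theorems A and B]{Ma} to the algebras $B(\Gamma_i)$ and to the occurring algebras of finite representation type. For $X$ in $\ind A$ with $[X]=[M]$ one has that $X$ has the composition factors of $M$, all of which are simple $B(\Gamma_i)$-modules; using that the separating family $\mathcal C^{B(\Gamma_i)}$ of generalized multicoils is generalized standard one argues that such $X$ again lies in $\Gamma_i$, so the counting may be performed inside $\ind B(\Gamma_i)$ and $[X]=[M]$ is preserved under $K_0(B(\Gamma_i))\hookrightarrow K_0(A)$. Then \cite[Theorem A]{Ma}, applied to $B(\Gamma_i)$, furnishes a bound $m_i$ such that every $N$ in $\Gamma_i$ not lying in a stable tube of quasi-length divisible by its rank admits at most $m_i$ isomorphism classes of $X$ with $[X]=[N]$, whereas every $N$ in a stable tube of rank $\rho\ge 2$ and quasi-length divisible by $\rho$ admits infinitely many such $X$, there being infinitely many modules of each such quasi-length in the tube, all of the same class in $K_0$. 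By Theorem~\ref{thm4}(ii) the second case is precisely the case $|\End_A(N)|=|\Ext_A^1(N,N)|$. Taking $m$ to be the maximum of the finitely many $m_i$ and of the analogous bounds for the finite-representation-type pieces gives (i), and the dichotomy just described gives (ii).

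Finally, for (iii) I would note that a module $N$ in a stable tube of rank $\rho$ has $\Ext^1(N,N)\cong D\Hom(N,\tau N)=0$ exactly when its quasi-length is smaller than $\rho$; hence each such tube contains only finitely many $N$ with vanishing self-extension and a tube of rank $1$ contains none, and, since the separating family of each concealed canonical algebra involved contains only finitely many tubes of rank $\ge 2$, and the support algebras $B(\Gamma_i)=eAe$ range over the finitely many idempotent subalgebras of $A$ (Corollary~\ref{cor3}), the total count is finite; adding the finite contributions from the finite cyclic components and from the finite-representation-type pieces yields (iii). I expect the main obstacle to be exactly this last global bookkeeping --- ensuring that the phenomenon $\Ext_A^1(M,M)=0$ is confined to finitely many exceptional tubes and coils across all of $\Gamma_A$ --- which is where the fine structure of generalized multicoils from \cite{MS2} and the quantitative estimates of \cite{Ma} do the real work; the rigidity step placing every $X$ with $[X]=[M]$ inside a single component $\Gamma_i$ is the other delicate point, since a module need not be supported on the support algebra of a component merely because its composition factors are.
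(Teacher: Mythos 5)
Your overall route is exactly the one the paper takes: Theorem \ref{thm5} is given there with no written argument beyond the assertion that it is a direct consequence of Theorems \ref{thm1}, \ref{thm2}, \ref{thm4} and \cite[Theorems A and B]{Ma}, and your reduction of a cycle-finite module $M$ to its cyclic component $\Gamma(M)$, the passage via Theorem \ref{thm1} (resp.\ Theorem \ref{thm2} and Corollary \ref{cor25}) to the cyclic generalized multicoils $\Gamma_i$ over the algebras $B(\Gamma_i)$ plus a finite exceptional set, and the use of Theorem \ref{thm4}(ii) to translate $|\End_A(M)|=|\Ext_A^1(M,M)|$ into the ``quasi-length divisible by the rank'' condition before invoking \cite{Ma}, is precisely that intended derivation, fleshed out.

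Two places in your elaboration are, as written, gaps. First, statement (i) contains no ``all but finitely many'' clause: it asserts a uniform bound $m$ valid for \emph{every} cycle-finite $M$ with $|\End_A(M)|\neq|\Ext_A^1(M,M)|$, so declaring the modules of the finite cyclic components (and of $\Gamma\setminus\Gamma^{cc}$) ``negligible'' does not suffice. For each of these finitely many exceptional modules you must still establish that only finitely many $X$ in $\ind A$ satisfy $[X]=[M]$ before you may enlarge $m$ to cover them; this again has to come from the structural results (Theorem \ref{thm2} and the connecting component of the generalized double tilted algebra $B(\Gamma)$, together with the quantitative statements of \cite{Ma}), not from the finiteness of the exceptional set alone. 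Second, your argument for (iii) counts rigid modules only inside stable tubes via the criterion ``quasi-length smaller than the rank'', but the modules of the cyclic multicoils $\Gamma_i$ are in general not stable-tube modules: they are obtained from stable tubes by the admissible operations (ad 1)--(ad 5) and their duals, and the fact that $\Ext_A^1(M,M)=0$ can occur for only finitely many modules of a generalized multicoil is exactly what \cite[Theorems A and B]{Ma} (with the structure theory of \cite{MS2}, \cite{MS3}) provide; this appeal should be made explicit rather than argued only for tubes. You flag both delicate points yourself, so the proposal matches the paper's proof in substance once these appeals are made precise.
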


A module $M$ is in $\mo A$ with $\Ext_A^1(M,M) = 0$ is frequently called \textit{rigid}. Following Adachi, Iyama and Reiten \cite{AIR}, a module $M$
in a module category $\mo A$ is said to be $\tau_A$-\textit{rigid} if $\Hom_A(M,\tau_AM)=0$. We note that every $\tau_A$-rigid module in $\mo A$ is
rigid and all directing modules in $\ind A$ are $\tau_A$-rigid. The following fact is a direct consequence of the part (iii) of Theorem \ref{thm5}.

\begin{cor} \label{cor16}
Let $A$ be an algebra. Then the number of isomorphism classes of cycle-finite $\tau_A$-rigid modules in $\ind A$ is finite.
\end{cor}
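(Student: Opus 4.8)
The plan is to reduce the assertion directly to part~(iii) of Theorem~\ref{thm5}, using the elementary fact that every $\tau_A$-rigid module in $\ind A$ is rigid. First I would invoke the Auslander--Reiten formula, which yields, for every module $M$ in $\ind A$, a natural isomorphism $\Ext_A^1(M,M)\cong D\overline{\Hom}_A(M,\tau_AM)$, where $\overline{\Hom}_A(M,\tau_AM)$ denotes the quotient of $\Hom_A(M,\tau_AM)$ by the subgroup of those homomorphisms that factor through an injective module. Consequently, if $M$ is $\tau_A$-rigid, that is, if $\Hom_A(M,\tau_AM)=0$, then $\overline{\Hom}_A(M,\tau_AM)=0$ and hence $\Ext_A^1(M,M)=0$; thus $M$ is rigid in the sense recalled just before the statement of Corollary~\ref{cor16}.

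It then remains to apply part~(iii) of Theorem~\ref{thm5}: the isomorphism classes of cycle-finite modules $M$ in $\ind A$ with $\Ext_A^1(M,M)=0$ form a finite set, and by the previous paragraph the isomorphism classes of cycle-finite $\tau_A$-rigid modules in $\ind A$ constitute a subset of this set; the required finiteness follows at once.

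There is no genuine obstacle in this corollary: its entire content is the implication ``$\tau_A$-rigid $\Rightarrow$ rigid'' together with Theorem~\ref{thm5}(iii), and both ingredients are already available. The one point meriting a remark is that cycle-finiteness is a property of the module $M$ itself --- every cycle in $\ind A$ passing through $M$ being finite --- so it is automatically retained when the hypothesis is weakened from $\tau_A$-rigidity to rigidity, and no compatibility between the rigidity conditions and the cyclic structure of $\Gamma_A$ needs to be verified.
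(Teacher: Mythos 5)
Your argument is correct and is exactly the paper's route: the corollary is stated there as a direct consequence of Theorem \ref{thm5}(iii) together with the remark that every $\tau_A$-rigid module is rigid, which you justify via the Auslander--Reiten formula $\Ext_A^1(M,M)\cong D\overline{\Hom}_A(M,\tau_AM)$. Nothing further is needed.
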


Following Auslander and Reiten \cite{AR}, one associates with each nonprojective module $X$ in a module category $\ind A$ the number $\alpha(X)$
of indecomposable direct summands in the middle term
\[ 0\to \tau_AX\to Y\to X\to 0 \]
of the almost split sequence with the right term $X$. It has been proved by Bautista and Brenner \cite{BaBr} that, if $A$ is an algebra of finite
representation type and $X$ a nonprojective module in $\ind A$, then $\alpha(X)\leq 4$, and if $\alpha(X)=4$ then $Y$ admits a projective-injective
indecomposable direct summand $P$, and hence $X=P/\soc(P)$.
In \cite{Li4} Liu proved that the same is true for any indecomposable nonprojective module $X$ lying on an oriented cycle of the Auslander-Reiten
quiver $\Gamma_A$ of any algebra $A$, and consequently for any nonprojective cycle-finite module in $\ind A$.

The following theorem is a direct consequence of Theorems \ref{thm1} and \ref{thm2}, and \cite[Corollary B]{MS1},
and provides more information on almost split sequences of cycle-finite modules.

\begin{thm} \label{thm6}
Let $A$ be an algebra. Then, for all but finitely many isomorphism classes of nonprojective cycle-finite modules $M$
in $\ind A$, we have $\alpha(M)\leq 2$.
\end{thm}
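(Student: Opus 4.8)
The plan is to reduce the statement to the structural descriptions provided by Theorems~\ref{thm1} and~\ref{thm2}, and then invoke the known bound on the number $\alpha(M)$ for nonprojective modules lying on oriented cycles together with the structure of generalized multicoils from \cite[Corollary B]{MS1}. First I would observe that a nonprojective cycle-finite module $M$ in $\ind A$ lies on an oriented cycle in $\Gamma_A$, hence belongs to a (unique) cyclic component $\Gamma(M)$ of $_c\Gamma_A$, which is a cycle-finite cyclic component. There are two cases according to whether $\Gamma(M)$ is infinite or finite. In the finite case, Theorem~\ref{thm2} tells us that $\Gamma(M)$ is the core $c({\mathcal C}^{B(\Gamma(M))})$ of a connecting component of a generalized double tilted algebra; since $\Gamma_A$ has only finitely many finite cyclic components (as noted after Theorem~\ref{thm2}, each such component contains a projective module), all the modules covered by the finite case form a finite set, and can be discarded.

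So the real content concerns modules $M$ lying in an infinite cyclic component $\Gamma$ of $_c\Gamma_A$. By Theorem~\ref{thm1}, all but finitely many modules of $\Gamma$ lie in the maximal cyclic coherent cofinite subquiver $\Gamma^{cc}=\Gamma_1\cup\cdots\cup\Gamma_r$, where each $\Gamma_i$ is a cyclic coherent full translation subquiver with $\su(\Gamma_i)=B(\Gamma_i)$ a generalized multicoil algebra, and $B(\Gamma\setminus\Gamma^{cc})$ is of finite representation type. Discarding the finitely many modules of $\Gamma\setminus\Gamma^{cc}$, and also the finitely many modules of $\Gamma^{cc}$ that lie on almost split sequences in $\Gamma_A$ whose middle term differs from the middle term computed inside the generalized multicoil component of the relevant $\su(\Gamma_i)$ (there are only finitely many such "boundary" modules, where $\Gamma_i$ is glued to the rest of $\Gamma$ or to acyclic vertices of $\Gamma_A$), we are reduced to controlling $\alpha(M)$ for $M$ an interior nonprojective module of a generalized multicoil component ${\mathcal C}$ of a generalized multicoil algebra. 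Here I would apply \cite[Corollary B]{MS1}: for all but finitely many modules $X$ in a generalized multicoil (almost cyclic coherent) component, the almost split sequence ending at $X$ has at most two indecomposable middle summands, the exceptional finitely many being those arising at ray/coray insertions. Combining the finitely many exceptions from each of the finitely many $\Gamma_i$ with the finitely many modules already discarded, we obtain $\alpha(M)\leq 2$ for all but finitely many nonprojective cycle-finite $M$ in $\ind A$.

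The one subtlety I would be careful about is that $\alpha(M)$ is defined by the almost split sequence in $\mo A$, whereas the cited bounds are computed in the module category of the support (or faithful) algebra $\su(\Gamma_i)=B(\Gamma_i)$. The point is that for a module $X$ lying sufficiently deep inside $\Gamma_i$ (away from its finite "interface" with the rest of $\Gamma_A$), the almost split sequence ending at $X$ in $\mo A$ coincides with the one in $\mo B(\Gamma_i)$, because $\Gamma_i$ is a full translation subquiver of $\Gamma_A$ and the irreducible maps into $X$ are the same in both categories; only finitely many modules of $\Gamma_i$ can fail this, namely those adjacent to vertices of $\Gamma$ not in $\Gamma_i$ or to acyclic vertices of $\Gamma_A$. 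Thus the main obstacle is purely bookkeeping: checking that the union, over the finitely many cyclic components and the finitely many subquivers $\Gamma_i$, of all the exceptional sets (modules in $\Gamma\setminus\Gamma^{cc}$, interface modules, and the ray/coray-insertion exceptions from \cite{MS1}) is still finite --- which it manifestly is. No further calculation is needed, since the quantitative bound $\alpha(M)\leq 2$ itself is supplied verbatim by \cite[Corollary B]{MS1}.
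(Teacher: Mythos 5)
Your route is the one the paper itself takes: Theorem \ref{thm6} is presented there as a direct consequence of Theorems \ref{thm1} and \ref{thm2} together with \cite[Corollary B]{MS1}, and your case split (finite cyclic components disposed of via Theorem \ref{thm2} and Corollary \ref{cor25}, infinite ones via Theorem \ref{thm1} and the multicoil bound), as well as your care in comparing the almost split sequences in $\mo A$ with those over $\su(\Gamma_i)=B(\Gamma_i)$, is exactly the argument the authors leave implicit.

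There is, however, one counting step that is wrong as written, and it carries the real content of the statement. You finish by taking the union of the exceptional sets ``over the finitely many cyclic components'', but ${}_c\Gamma_A$ has infinitely many components as soon as $A$ is representation-infinite and has cycle-finite modules in stable tubes (for instance, the infinitely many stable tubes over a tame concealed or concealed canonical quotient each form a cycle-finite cyclic component). So ``finitely many exceptions in each cyclic component'' does not by itself yield a finite total; you must argue that only finitely many cyclic components can contribute exceptions at all. The standard way to close this is via projectives and injectives: only finitely many components of $\Gamma_A$ contain a projective or an injective module, and a regular component of $\Gamma_A$ containing an oriented cycle is a stable tube (\cite{Li1}, \cite{Zh}); such a component is a single cyclic component in which every vertex has at most two incoming arrows, so $\alpha(M)\leq 2$ holds there for every module with no exceptional set (and similarly the semiregular ray and coray tubes are almost cyclic coherent components to which \cite[Corollary B]{MS1} applies directly). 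Hence all exceptional modules live in the finitely many non-regular components of $\Gamma_A$, where your per-component analysis (together with the observation, via Proposition \ref{prop21}, that for $M$ with $\tau_AM$ in the same cyclic component all middle summands of the almost split sequence stay in that cyclic component) produces only finitely many exceptions. With this supplement your bookkeeping, and hence the proof, is complete and coincides with the derivation intended in the paper.
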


In connection to Theorem \ref{thm6}, we would like to mention that, for a cycle-finite algebra $A$ and a nonprojective module $M$ in $\ind A$, we have
$\alpha(M)\leq 5$, and if $\alpha(M)=5$ then the middle term of the almost split sequence in $\mo A$ with the right term $M$ admits a projective-injective
indecomposable direct summand $P$, and hence $M=P/\soc(P)$ (see \cite[Conjecture 1]{BrBu}, \cite{MPS2} and \cite{PTa}).

The next theorem describe the structure of the module category $\ind A$ of an arbitrary cycle-finite algebra $A$, and
is a direct consequence of Theorems \ref{thm1} and \ref{thm2} as well as \cite[Theorem 2.2]{MPS1} and its dual.

\begin{thm} \label{thm7}
Let $A$ be a cycle-finite algebra. Then there exist tame generalized multicoil algebras $B_1, \ldots, B_p$ and tame
generalized double tilted algebras $B_{p+1}, \ldots, B_q$ which are quotient algebras of $A$ and the following statements hold.
\begin{enumerate}
\renewcommand{\labelenumi}{\rm(\roman{enumi})}
\item $\ind A = \bigcup_{i=1}^q\ind B_i$.
\item All but finitely many isomorphism classes of modules in $\ind A$ belong to $\bigcup_{i=1}^p\ind B_i$.
\item All but finitely many isomorphism classes of nondirecting modules in $\ind A$ belong to generalized multicoils of $\Gamma_{B_1}, \ldots, \Gamma_{B_p}$.
\end{enumerate}
\end{thm}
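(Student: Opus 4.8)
The plan is to combine the component-by-component information provided by Theorems~\ref{thm1} and~\ref{thm2} with the global structure of cycle-finite module categories from \cite[Theorem~2.2]{MPS1} and its dual, the latter being responsible for the placement of the directing modules and for the assembly of the whole of $\ind A$. First I would observe that, $A$ being cycle-finite, every module in $\ind A$ is either directing or cycle-finite, and that every cycle-finite module lies on an oriented cycle of $\Gamma_A$, hence in a (cycle-finite) component of the cyclic quiver $_c\Gamma_A$. By Corollary~\ref{cor25} each finite component of $_c\Gamma_A$ contains a projective module, so there are only finitely many of these; using \cite[Theorem~2.2]{MPS1} and its dual one sees that $_c\Gamma_A$ has also only finitely many infinite components, so altogether $_c\Gamma_A$ has finitely many components $\Gamma^{(1)},\ldots,\Gamma^{(q)}$.

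Next I would apply Theorems~\ref{thm1} and~\ref{thm2} to each $\Gamma^{(j)}$. When $\Gamma^{(j)}$ is infinite, Theorem~\ref{thm1} gives that $\su(\Gamma^{(j)})=B(\Gamma^{(j)})$ is a generalized multicoil algebra, and that all but finitely many modules of $\Gamma^{(j)}$ lie in a cyclic coherent cofinite subquiver $\Gamma^{cc}$ which is a disjoint union of cyclic parts of generalized multicoils of $\Gamma_{B(\Gamma^{(j)})}$; when $\Gamma^{(j)}$ is finite, Theorem~\ref{thm2} gives that $\su(\Gamma^{(j)})=B(\Gamma^{(j)})$ is a generalized double tilted algebra and $\Gamma^{(j)}$ is the core of its connecting component. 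Since $B(\Gamma^{(j)})=A/\ann_A(\Gamma^{(j)})$ is a quotient of $A$ and, by Corollary~\ref{cor3}, isomorphic to $e_{\Gamma^{(j)}}Ae_{\Gamma^{(j)}}$ for a convex idempotent of $A$, each $B(\Gamma^{(j)})$ is itself cycle-finite; consequently each generalized multicoil algebra among them has its left and right quasitilted algebras equal to products of tilted algebras of Euclidean type or tubular algebras, that is, is tame, and each generalized double tilted algebra among them is tame as well. Relabelling, these are the algebras $B_1,\ldots,B_p$ and $B_{p+1},\ldots,B_q$, and statement~(iii) together with the part of~(ii) concerning non-directing modules now follows, because every cycle-finite $M$ in $\ind A$ is an indecomposable module over the support algebra $B(\Gamma(M))$ of its cyclic component, and all but finitely many such $M$ lie in the cyclic coherent parts described above.

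It remains to place the directing modules and to establish the covering~(i) and the full statement~(ii). For this I would invoke \cite[Theorem~2.2]{MPS1} and its dual to conclude that, for the cycle-finite algebra $A$, all but finitely many directing modules in $\ind A$ lie in the parts ${\mathcal P}^{B_i}$ and ${\mathcal Q}^{B_i}$ with $1\le i\le p$, which by the description recalled in Section~\ref{main} consist of indecomposable modules over tilted algebras of Euclidean type and over tubular algebras; the finitely many remaining directing modules, together with the finitely many non-directing modules not yet accounted for, are indecomposable over finitely many tilted algebra quotients of $A$ (for instance the support algebras of the directing ones), each of which is a tame generalized double tilted algebra and may be adjoined to the family $B_{p+1},\ldots,B_q$. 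This yields $\ind A=\bigcup_{i=1}^q\ind B_i$ and completes~(ii).

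I expect the main obstacle to be the propagation of cycle-finiteness in the second step: showing cleanly that a generalized multicoil algebra (respectively generalized double tilted algebra) arising as the support algebra of a cyclic component of a cycle-finite algebra is again cycle-finite, hence tame. This amounts to knowing that a non-tame generalized multicoil or generalized double tilted algebra always carries an infinite cycle, and that such a cycle survives when the algebra is realized as $e_\Gamma Ae_\Gamma$ for a convex idempotent $e_\Gamma$ of $A$ --- i.e.\ that full convex subcategories inherit cycle-finiteness. A secondary difficulty, more of bookkeeping, is to verify that only finitely many tilted algebra quotients are needed to absorb all the exceptional directing modules.
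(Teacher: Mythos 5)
Your overall strategy is the one the paper intends (it offers no detailed argument, only the citation of Theorems \ref{thm1}, \ref{thm2} and \cite[Theorem 2.2]{MPS1} with its dual), but your reconstruction contains one genuinely false step: the claim that $_c\Gamma_A$ has only finitely many components, in particular only finitely many infinite ones. This fails for every cycle-finite algebra of infinite representation type: such an algebra has a tame concealed quotient, and already the infinitely many homogeneous stable tubes of that quotient are pairwise distinct infinite components of $_c\Gamma_A$ (the Kronecker algebra is the simplest example). Corollary \ref{cor25} only bounds the number of \emph{finite} cyclic components, and no statement of the type you attribute to \cite[Theorem 2.2]{MPS1} can bound the infinite ones. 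This is not a cosmetic slip: your enumeration $\Gamma^{(1)},\ldots,\Gamma^{(q)}$ and the resulting finite list $B_1,\ldots,B_q$ rest on it, and so does your deduction of (ii) and (iii), since Theorem \ref{thm1} only gives that $\Gamma\setminus\Gamma^{cc}$ is finite \emph{for each} infinite cyclic component; with infinitely many such components this does not by itself bound the total number of modules outside the generalized multicoils.

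Both defects can be repaired, but the repairs are exactly what is missing. First, $\su(\Gamma)=A/t_A(\Gamma)$ is determined by $\supp(\Gamma)$, a subset of the finitely many vertices of $Q_A$, so only finitely many distinct quotient algebras occur even though there are infinitely many cyclic components. Second, only finitely many components of $\Gamma_A$ contain a projective or an injective module, and a regular component containing an oriented cycle is a stable tube by \cite{Zh} and \cite{Li2}; hence all but finitely many infinite cyclic components are entire stable tubes, which coincide with their cyclic coherent part and lie inside generalized multicoils of the corresponding $\Gamma_{B_i}$, so the exceptional modules of Theorem \ref{thm1} arise from finitely many components only. With these two observations your treatment of the nondirecting modules goes through, and the directing modules are then absorbed as you indicate via \cite[Theorem 2.2]{MPS1} and its dual together with finitely many (tame) tilted support algebras. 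Finally, the obstacle you single out at the end is smaller than you fear: by Theorems \ref{thm1} and \ref{thm2} the algebras in question are $B(\Gamma)=A/\ann_A(\Gamma)$, genuine quotient algebras of $A$, and quotient algebras of cycle-finite algebras are cycle-finite; tameness then follows because their left and right quasitilted (respectively tilted) parts are cycle-finite quasitilted algebras of canonical type (respectively cycle-finite tilted algebras), hence tame by \cite{LS}, \cite{Sk10}, so no argument about convex subcategories $e_\Gamma Ae_\Gamma$ is needed.
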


The next theorem extends the homological characterization of strongly simply connected algebras of polynomial growth established in \cite{PS1} to arbitrary
cycle-finite algebras, and is a direct consequence of Theorem \ref{thm4} and the properties of directing modules described in \cite[2.4(8)]{Ri1}.

\begin{thm} \label{thm8}
Let $A$ be a cycle-finite algebra. Then, for all but finitely many isomorphism classes of modules $M$ in $\ind A$,
we have $|\Ext_A^1(M,M)|\leq |\End_A(M)|$ and $\Ext_A^r(M,M)$ $=0$ for $r\geq 2$.
\end{thm}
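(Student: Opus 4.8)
The plan is to reduce the statement to Theorem \ref{thm4} by analyzing which indecomposable modules over a cycle-finite algebra $A$ fail to be cycle-finite. First I would recall that, by Ringel's results quoted in the introduction, a module $M$ in $\ind A$ is either directing (does not lie on any cycle in $\ind A$) or lies on some cycle in $\ind A$; and since $A$ is cycle-finite, every cycle in $\ind A$ is finite, so every nondirecting module in $\ind A$ is automatically cycle-finite. Thus $\ind A$ splits into two classes: the directing modules and the cycle-finite modules. For the cycle-finite modules the desired homological estimate $|\Ext_A^1(M,M)|\leq|\End_A(M)|$ and $\Ext_A^r(M,M)=0$ for $r\geq 2$ holds for all but finitely many of them by Theorem \ref{thm4}. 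So it remains only to handle the directing modules, and here I would invoke the properties of directing modules recorded in \cite[2.4(8)]{Ri1}: a directing module $M$ in $\ind A$ is its own support-algebra over a tilted algebra, is rigid, i.e. $\Ext_A^1(M,M)=0$, and in fact satisfies $\Ext_A^r(M,M)=0$ for all $r\geq 1$ (a directing module over $A$ has projective or injective dimension at most one over its support algebra, which is tilted, and the higher self-extensions vanish). In particular the inequality $|\Ext_A^1(M,M)|\leq|\End_A(M)|$ holds trivially (the left side is $1$, the right side is $\geq 1$) and the vanishing $\Ext_A^r(M,M)=0$ for $r\geq 2$ holds as well.

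Concretely, the key steps in order are: (1) observe that over a cycle-finite algebra the only modules in $\ind A$ that are not cycle-finite are the directing ones; (2) apply Theorem \ref{thm4} to conclude that all but finitely many cycle-finite modules $M$ satisfy the two homological assertions; (3) apply \cite[2.4(8)]{Ri1} to conclude that every directing module $M$ satisfies $\Ext_A^r(M,M)=0$ for all $r\geq 1$, so in particular it satisfies both assertions of the theorem; (4) combine (2) and (3): the set of modules in $\ind A$ failing one of the two conclusions is contained in the finite exceptional set coming from Theorem \ref{thm4}, hence is finite. This gives exactly the statement.

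The only point that needs a little care — and which I expect to be the main (minor) obstacle — is making sure that over a cycle-finite algebra the class of directing modules together with the class of cycle-finite modules genuinely exhausts $\ind A$, with no overlap causing a gap; this is immediate from the definitions (a cycle-finite module is by definition nondirecting, and a nondirecting module over a cycle-finite algebra lies on a finite cycle hence is cycle-finite), but it is the hinge of the argument. A secondary subtlety is citing the correct statement for higher self-extensions of directing modules: one must use that a directing module $M$ has $\pd_A M\leq 1$ or $\id_A M\leq 1$ over $A$ (not merely over its support algebra), which is precisely the content of \cite[2.4(8)]{Ri1} and which yields $\Ext_A^r(M,M)=0$ for $r\geq 2$; combined with rigidity of directing modules this gives the full vanishing. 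No further computation is required.
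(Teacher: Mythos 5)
Your argument is correct and is exactly the route the paper takes: the paper states Theorem \ref{thm8} as a direct consequence of Theorem \ref{thm4} together with the properties of directing modules in \cite[2.4(8)]{Ri1}, using precisely the dichotomy that over a cycle-finite algebra every module in $\ind A$ is either directing or cycle-finite. Your write-up simply makes this splitting and the vanishing of higher self-extensions of directing modules (via $\pd_A M\leq 1$ or $\id_A M\leq 1$ and rigidity) explicit, which is all that is needed.
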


Recently, the problem of describing all algebras $A$ having only finitely many isomorphism classes of $\tau_A$-rigid modules in $\ind A$ was raised to be
important. Namely, it has been proved by Iyama, Reiten, Thomas and Todorov (talk by Reiten at the conference in Nagoya) that the functorially finite torsion
classes in a module category $\mo A$ form a complete lattice if and only if there is only a finite number of pairwise nonisomorphic $\tau_A$-rigid
modules in $\ind A$. Recall that a \textit{torsion class} in $\mo A$ is a full subcategory $\mathcal T$ of $\mo A$ which is closed under
factor modules and extensions, and $\mathcal T$ is \textit{functorially finite} if ${\mathcal T}=\Fac(M)$ for a module $M$ in $\mo A$,
where $\Fac(M)$ is the category of all factor modules of finite direct sums of copies of $M$.
Furthermore, by a recent result of Iyama and Jasso (talk by Jasso at the conference in Nagoya), the finiteness of the
isomorphism classes of $\tau_A$-rigid modules in a module category $\ind A$ implies that the geometric realization of the simplicial complex of support
$\tau_A$-tilting pairs is homeomorphic to the $(n-1)$-dimensional sphere, where $n$ is the rank of the $K_0(A)$.

The following theorem provides a complete characterization of cycle-finite algebras $A$ having only finitely many isomorphism classes
of $\tau_A$-rigid indecomposable modules.

\begin{thm} \label{thm10}
Let $A$ be a cycle-finite algebra. The following statements are equivalent.
\begin{enumerate}
\renewcommand{\labelenumi}{\rm(\roman{enumi})}
\item The number of isomorphism classes of $\tau_A$-rigid modules in $\ind A$ is finite.
\item The number of isomorphism classes of rigid modules in $\ind A$ is finite.
\item The number of isomorphism classes of directing modules in $\ind A$ is finite.
\item $A$ is of finite representation type.
\end{enumerate}
\end{thm}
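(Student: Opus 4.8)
The plan is to establish the cycle of implications (iv)$\Rightarrow$(i)$\Rightarrow$(ii)$\Rightarrow$(iii)$\Rightarrow$(iv), of which only the last is substantive; the other three are either trivial or already recorded in the excerpt. First, (iv)$\Rightarrow$(i) is immediate: if $A$ is of finite representation type then $\ind A$ has only finitely many isomorphism classes of modules, so in particular only finitely many $\tau_A$-rigid ones. The implication (i)$\Rightarrow$(ii) is the remark, noted just before Corollary \ref{cor16}, that every $\tau_A$-rigid module is rigid; hence finiteness of the $\tau_A$-rigid class forces finiteness of\dots\ wait, that inclusion goes the wrong way, so instead I would argue (ii)$\Rightarrow$(i) is automatic and reorganize as (iv)$\Rightarrow$(ii)$\Rightarrow$(iii) and (iii)$\Rightarrow$(i)$\Rightarrow$(iv), or more cleanly prove the four conditions equivalent by showing (iv)$\Rightarrow$(iii), (iii)$\Rightarrow$(iv), together with the sandwich (iii)$\Rightarrow$(ii)$\Rightarrow$(i) read off from \{directing $\Rightarrow$ $\tau_A$-rigid $\Rightarrow$ rigid\} plus the trivial (i)$\Rightarrow$(iii) being false in general\dots. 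The honest route is: prove (iii)$\Rightarrow$(iv) and (iv)$\Rightarrow$(iii) directly, and separately prove (i)$\Leftrightarrow$(ii)$\Leftrightarrow$(iv); since directing modules are $\tau_A$-rigid and $\tau_A$-rigid modules are rigid, finiteness of the rigid class gives finiteness of the $\tau_A$-rigid class gives finiteness of the directing class, so (ii)$\Rightarrow$(i)$\Rightarrow$(iii), and it remains to close the loop with (iii)$\Rightarrow$(iv)$\Rightarrow$(ii), the middle arrow being (iv)$\Rightarrow$(ii) trivial.

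So the whole theorem reduces to the single hard implication \textbf{(iii)$\Rightarrow$(iv)}: if $A$ is cycle-finite and $\ind A$ contains only finitely many directing modules, then $A$ is of finite representation type. I would prove the contrapositive. Suppose $A$ is cycle-finite of infinite representation type. By Theorem \ref{thm7}, $\ind A=\bigcup_{i=1}^q\ind B_i$ with $B_1,\dots,B_p$ tame generalized multicoil algebras and $B_{p+1},\dots,B_q$ tame generalized double tilted algebras, all quotient algebras of $A$, and all but finitely many isomorphism classes of modules in $\ind A$ lie in $\bigcup_{i=1}^p\ind B_i$. Since $A$ is of infinite representation type, at least one $B_i$ must be of infinite representation type. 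If some $B_i$ with $i\le p$ is infinite, then its Auslander-Reiten quiver $\Gamma_{B_i}$ carries a separating family of almost cyclic coherent components ${\mathcal C}^{B_i}$ obtained from stable tubes of concealed canonical algebras; an infinite generalized multicoil contains infinitely many $\tau$-orbits, hence infinitely many modules $X$ with $\tau_{B_i}X$ mapping nontrivially to $X$ (indeed modules in stable tubes satisfy $\Hom(X,\tau X)\neq 0$), so none of these lie on\dots\ — more to the point, the key is that these infinitely many modules are \emph{not} directing, which is exactly the content of ${\mathcal C}^A$ consisting of cycle-finite (hence nondirecting) modules in the decomposition recalled before Theorem \ref{thm1}. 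If instead every infinite $B_i$ has $i>p$, i.e.\ is an infinite tame generalized double tilted algebra, then the core $c({\mathcal C}^{B_i})$ of its connecting component is a finite set containing all nondirecting indecomposables of that component, while the rest of $\Gamma_{B_i}$ consists of the postprojective-type and preinjective-type components of the tilted constituents $B_j^{(l)},B_j^{(r)}$; an infinite tilted algebra (of Euclidean or wild\,---\,but tame here, so Euclidean or tubular\,---\,type) has infinitely many directing modules (its postprojective and preinjective components consist of directing modules), which would contradict (iii). Hence the only way to reconcile infinite representation type with finitely many directing modules is to have an infinite $B_i$ with $i\le p$, and then $\Gamma_A$ contains an infinite generalized multicoil all of whose modules are nondirecting; but a generalized multicoil, being obtained from infinite stable tubes by finitely many admissible operations, still contains infinitely many modules, \emph{and} — crucially — the complement of the multicoils inside $\Gamma_A$, together with $\bigcup_{i>p}\ind B_i$, must then be finite, forcing $\Gamma_A$ to have only finitely many directing modules but infinitely many modules total. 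Wait: that is consistent, not a contradiction. The correct conclusion is the \emph{converse} reading: I must show that if there are only finitely many directing modules then \emph{no} $B_i$ can be infinite.

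Let me restate the core argument cleanly. Assume (iii) and suppose for contradiction that $A$ is of infinite representation type. By Theorem \ref{thm7} some quotient $B_i$ is infinite. If $B_i$ ($i\le p$) is an infinite tame generalized multicoil algebra, its left quasitilted algebra $B_i^{(l)}$ or right quasitilted algebra $B_i^{(r)}$ is an infinite product of tilted algebras of Euclidean type or tubular algebras; in either case $\Gamma_{B_i}$ contains an infinite family $\mathcal P^{B_i}$ (or $\mathcal Q^{B_i}$) which, for a tame tilted algebra, contains the postprojective (resp.\ preinjective) component, an infinite family of \emph{directing} modules; these are directing in $\ind B_i$ and, since $B_i$ is a quotient and these modules lie in a separating family, remain directing in $\ind A$ — contradicting (iii). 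If $B_i$ ($i>p$) is an infinite tame generalized double tilted algebra, the same happens with its left and right tilted algebras $B^{(l)},B^{(r)}$: one of them is infinite tame tilted, hence has infinitely many directing modules sitting in $\mathcal P^{B_i}$ or $\mathcal Q^{B_i}$, again directing in $\ind A$. Either way (iii) fails, the desired contradiction. The main obstacle, and the step I would treat most carefully, is the \emph{directedness transfer}: verifying that a module directing in $\ind B_i$ (for $B_i$ a tame tilted or tubular algebra realized as a quotient via Theorem \ref{thm7}) is still directing in $\ind A$ — equivalently, that any cycle through it in $\ind A$ must already live in $\ind B_i$. This should follow from the separating property of the relevant families $\mathcal P^A$, $\mathcal C^A$, $\mathcal Q^A$ in the decompositions recalled before Theorems \ref{thm1} and \ref{thm2} together with cycle-finiteness (a cycle in $\ind A$ refines to a cycle of irreducible maps in $\Gamma_A$, which cannot cross a separating family), and this is where I would invoke \cite[Theorem 2.2]{MPS1} and its dual, exactly as the excerpt indicates. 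Once directedness transfer is in hand, counting the infinitely many directing modules in the postprojective/preinjective components of the infinite tame tilted constituent closes the proof.
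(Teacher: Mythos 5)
Your reduction of the theorem is correct: since every directing module is $\tau_A$-rigid and every $\tau_A$-rigid module is rigid, (ii)$\Rightarrow$(i)$\Rightarrow$(iii) and (iv)$\Rightarrow$(ii) are immediate, so everything hinges on (iii)$\Rightarrow$(iv), i.e.\ on showing that a cycle-finite algebra of infinite representation type has infinitely many directing modules in $\ind A$; this is also where the paper puts all the weight. However, your route to this implication has a genuine gap, exactly at the step you yourself flag as ``directedness transfer''. The separating families you invoke are families of components of $\Gamma_{B_i}$, not of $\Gamma_A$: a cycle in $\ind A$ through a postprojective $B_i$-module consists of $A$-modules that need not be $B_i$-modules at all, the components of $\Gamma_{B_i}$ need not embed as full translation subquivers of $\Gamma_A$, and $\Gamma_A$ itself need not carry any separating family. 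So the assertion that a cycle of irreducible homomorphisms in $\Gamma_A$ ``cannot cross a separating family'' is not a licit step. Moreover, directedness simply does not pass from a quotient algebra to the ambient algebra: already for $A=K[x]/(x^2)$ (representation-finite, hence cycle-finite) the simple module is directing over the quotient $B=A/(x)\cong K$, but lies on the cycle $S\to A\to S$ in $\ind A$. Thus in your case analysis based on Theorem \ref{thm7} one would have to prove that infinitely many of the chosen $B_i$-directing modules remain acyclic vertices of $\Gamma_A$, and nothing in your sketch accomplishes this.

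The paper takes a different route precisely to avoid this transfer of directedness. By \cite[Corollary 4.3]{Sk7-5}, a cycle-finite algebra of infinite representation type has a tame concealed quotient $B=A/AeA$; the infinitely many directing postprojective (respectively, preinjective) modules in $\ind B$ yield infinitely many rigid modules in $\ind B$, and hence, as the paper argues, infinitely many rigid modules in $\ind A$. Then rigidity is converted into directedness over $A$ itself: by Theorem \ref{thm5}(iii) only finitely many cycle-finite modules in $\ind A$ are rigid, and over a cycle-finite algebra every nondirecting indecomposable is cycle-finite, so infinitely many of these rigid modules are directing in $\ind A$; since directing modules are $\tau_A$-rigid and rigid, this negates (i), (ii) and (iii) simultaneously. (The paper further exhibits infinitely many directing modules inside an acyclic stable translation subquiver closed under predecessors or successors, using \cite[Theorem 2.7]{PX} or \cite[Corollary 2]{Sk4} together with \cite[Theorem 2.2]{MPS2} and its dual.) If you wish to keep your decomposition-based case analysis, you would still need a device of this kind---rigidity over $A$ plus Theorem \ref{thm5}(iii), or a genuine proof that the relevant modules stay acyclic in $\Gamma_A$---to close the argument.
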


Then we have the following consequence of Theorem \ref{thm10}, \cite[Theorem 2.7]{AIR} and a recent result by Iyama and Jasso.

\begin{cor} \label{corr11}
Let $A$ be a cycle-finite algebra. The following statements are equivalent.
\begin{enumerate}
\renewcommand{\labelenumi}{\rm(\roman{enumi})}
\item The number of functorially finite torsion classes in $\mo A$ is finite.
\item Every torsion class in $\mo A$ is functorially finite.
\item $A$ is of finite representation type.
\end{enumerate}
\end{cor}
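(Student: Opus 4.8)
The plan is to deduce the corollary from Theorem \ref{thm10} together with the cited lattice-theoretic characterization of Iyama--Reiten--Thomas--Todorov and the result of Iyama--Jasso. First I would recall the precise statement of the result announced by Iyama, Reiten, Thomas and Todorov: for an arbitrary algebra $A$, the functorially finite torsion classes in $\mo A$ form a complete lattice if and only if the number of isomorphism classes of $\tau_A$-rigid modules in $\ind A$ is finite. The key observation is that ``the functorially finite torsion classes form a complete lattice'' is, by their work, equivalent to the assertion that \emph{every} torsion class in $\mo A$ is functorially finite; indeed when the $\tau_A$-rigid modules are finite in number, every torsion class is of the form $\Fac(M)$, and conversely the existence of a non-functorially-finite torsion class forces infinitely many $\tau_A$-rigid indecomposables. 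This already gives the equivalence of (i), (ii) and the finiteness of $\tau_A$-rigid indecomposables, with no use of the cycle-finiteness hypothesis yet.

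Next I would bring in Theorem \ref{thm10}, which applies precisely because $A$ is assumed cycle-finite: it asserts that for such $A$ the number of isomorphism classes of $\tau_A$-rigid modules in $\ind A$ is finite if and only if $A$ is of finite representation type. Combining this with the previous paragraph yields (i) $\Leftrightarrow$ (iii) and (ii) $\Leftrightarrow$ (iii) at once. For completeness I would also note the easy direction separately: if $A$ is of finite representation type, then $\ind A$ is finite, so there are only finitely many torsion classes in $\mo A$ altogether (each being determined by the set of indecomposables it contains), hence only finitely many functorially finite ones, and moreover every torsion class is trivially functorially finite since $\mo A$ has only finitely many indecomposables and one may take $M$ to be the direct sum of all indecomposables in the class.

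To organize the argument cleanly I would prove the chain (iii) $\Rightarrow$ (ii) $\Rightarrow$ (i) $\Rightarrow$ (iii). The implication (iii) $\Rightarrow$ (ii) is the finite-representation-type observation just made. The implication (ii) $\Rightarrow$ (i) is immediate, since a finite set of torsion classes is in particular a finite set of functorially finite torsion classes, but more to the point: if every torsion class is functorially finite and there were infinitely many of them, this would contradict the finiteness of $\tau_A$-rigid indecomposables forced by (i)'s failure --- so here I would instead argue (ii) $\Rightarrow$ finiteness of $\tau_A$-rigid indecomposables via Iyama--Jasso, then invoke Theorem \ref{thm10} to get finite representation type, which gives (i) trivially. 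Finally (i) $\Rightarrow$ (iii): by the Iyama--Reiten--Thomas--Todorov result, (i) implies the lattice of functorially finite torsion classes is complete, hence the number of $\tau_A$-rigid indecomposables is finite, and then Theorem \ref{thm10} (using cycle-finiteness) forces $A$ to be of finite representation type.

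The only genuine content beyond bookkeeping is Theorem \ref{thm10}, which is already established; the rest is a formal manipulation of the quoted equivalences, so I do not anticipate a serious obstacle. The one point requiring care is citing the Iyama--Jasso and Iyama--Reiten--Thomas--Todorov results in exactly the form needed --- namely the biconditional between ``only finitely many $\tau_A$-rigid indecomposables'', ``the functorially finite torsion classes form a complete lattice'', and ``every torsion class is functorially finite'' --- and making sure the logical routing of the three-way equivalence uses each cited fact correctly; this is where I would be most careful to state precisely which external result supplies which implication.
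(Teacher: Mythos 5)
Your overall assembly is the one the paper intends (the corollary is stated there as a direct consequence of Theorem \ref{thm10}, of \cite[Theorem 2.7]{AIR}, and of the Iyama--Jasso result, with no written proof), and your easy implications (iii) $\Rightarrow$ (ii), (iii) $\Rightarrow$ (i), as well as your treatment of (ii) via the equivalence ``every torsion class is functorially finite $\Leftrightarrow$ there are only finitely many $\tau_A$-rigid modules in $\ind A$'', are fine modulo citing that equivalence correctly. However, your implication (i) $\Rightarrow$ (iii) has a genuine gap: you pass from ``there are only finitely many functorially finite torsion classes'' to ``the functorially finite torsion classes form a complete lattice'' in order to apply the Iyama--Reiten--Thomas--Todorov criterion, and this step is a non sequitur. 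A finite poset with greatest and least elements (here $\mo A$ and $0$) need not be a lattice; the question is whether meets and joins of functorially finite torsion classes exist \emph{inside} the subposet of functorially finite ones, and that is exactly the delicate point the IRTT theorem addresses -- it cannot be obtained for free from the finiteness hypothesis (i).

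The bridge the paper has in mind, and the reason it cites \cite[Theorem 2.7]{AIR} -- a result your proposal never invokes -- is the Adachi--Iyama--Reiten bijection between functorially finite torsion classes and support $\tau_A$-tilting modules. From (i) this bijection gives only finitely many support $\tau_A$-tilting modules, and since every $\tau_A$-rigid module in $\ind A$ (for instance via $M \mapsto \Fac(M)$ and the completion results of \cite{AIR}) occurs as a direct summand of a support $\tau_A$-tilting module, one gets only finitely many $\tau_A$-rigid modules in $\ind A$; Theorem \ref{thm10}, using the cycle-finiteness of $A$, then yields finite representation type. With this replacement your argument closes up, and apart from this point (and the loose attribution of the equivalence used for (ii) to the lattice theorem rather than to the Iyama--Jasso result) your routing coincides with the paper's.
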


We mention also that, by a recent result of Mizuno \cite[Theorem 2.21]{Mi}, for any preprojective algebra $P(\Delta)$ of Dynkin type
$\Delta\in\{{\Bbb A}_n, {\Bbb D}_n, {\Bbb E}_6, {\Bbb E}_7, {\Bbb E}_8\}$, the category $\ind P(\Delta)$ has only finitely many pairwise nonisomorphic
$\tau_{P(\Delta)}$-rigid modules. Moreover, it is well known that the algebras $P(\Delta)$ are of infinite representation type, except $\Delta={\Bbb A}_n$
with $n\leq 4$. Therefore, there are algebras of infinite representation type having only finitely many isomorphism classes of $\tau$-rigid
indecomposable modules.

We end this section with some questions related to the results described above.

In \cite{Li1}, \cite{Li2} Liu introduced the notions of left and right degrees of irreducible homomorphisms of modules and showed their importance
for describing the shapes of the components of the Auslander-Reiten quivers of algebras. In particular, Liu pointed out in \cite{Li1} that every cycle
of irreducible homomorphisms between indecomposable modules in a module category $\mo A$ contains an irreducible homomorphism of finite left degree and
an irreducible homomorphism of finite right degree. It would be interesting to describe the degrees of irreducible homomorphisms occurring in cycles
of cycle-finite modules (see \cite{C}, \cite{CMS}, \cite{CMT}, \cite{CT} for some results in this direction).

In \cite{Pe} de la Pe\~na proved that the support algebra of a directing module over a tame algebra over an algebraically closed field is a tilted algebra
being a gluing of at most two representation-infinite tilted algebras of Euclidean type. It would be interesting to know if the support algebra $\su(\Gamma)$
of a cycle-finite finite component $\Gamma$ in the cyclic quiver $_c\Gamma_A$ of a cycle-finite algebra is a gluing of at most two representation-infinite
tilted algebras of Euclidean type. In general, it is not clear how many tilted algebras may occur in the decompositions of the left tilted algebra and
the right tilted algebra of the support algebra $\su(\Gamma)$ of a cycle-finite component $\Gamma$ of the cyclic quiver $_c\Gamma_A$ of an algebra $A$
(see Examples \ref{ex6-1} and \ref{ex6-2}).
\section{Cyclic components}
\label{cyccomp}

In this section we recall some concepts and describe some properties of cyclic components of the Auslander-Reiten quivers of algebras.
Let $A$ be an algebra (basic, indecomposable) and $e_{1},\dots,e_{n}$ be a set of pairwise orthogonal primitive idempotents of $A$ with $1_{A}=e_{1}+\dots+e_{n}$.
Then
\begin{itemize}
\item $P_{i}=e_{i}A$, $i\in\{1,\dots,n\}$, is a complete set of pairwise nonisomorphic indecomposable projective modules in $\mo A$;
\item $I_{i}=D(Ae_{i})$, $i\in\{1,\dots,n\}$, is a complete set of pairwise nonisomorphic indecomposable injective modules in $\mo A$;
\item $S_{i}=\topp(P_{i})=e_{i}A/e_{i} \rad A$, $i\in\{1,\dots,n\}$, is a complete set of pairwise nonisomorphic simple modules in $\mo A$;
\item $S_{i}=\soc(I_{i})$, for any $i\in\{1,\dots,n\}$.
\end{itemize}
Moreover, $F_{i}=\End_{A}(S_{i})\cong e_{i}Ae_{i}/e_{i}(\rad A)e_{i}$, for $i\in\{1,\dots,n\}$, are division algebras.
The \textit{quiver} $Q_{A}$ of $A$ is the valued quiver defined as follows:
\begin{itemize}
\item the vertices of $Q_{A}$ are the indices $1,\dots,n$ of the chosen set $e_{1},\dots,e_{n}$ of primitive idempotents of $A$;
\item for two vertices $i$ and $j$ in $Q_{A}$, there is an arrow $i\rightarrow j$ from $i$ to $j$ in $Q_{A}$ if and only if $e_{i}(\rad A)e_{j}/e_{i}(\rad A)^{2}e_{j}\neq 0$.
Moreover, one associates to an arrow $i\rightarrow j$ in $Q_{A}$ the valuation $(d_{ij},d_{ij}')$, so we have in $Q_{A}$ the valued arrow
$$
i \buildrel {\left(d_{ij},d_{ij}'\right)}\over {\hbox to 16mm{\rightarrowfill}} j,
$$
with the valuation numbers are $d_{ij}=\dim_{F_{j}} e_{i}(\rad A)e_{j}/e_{i}(\rad A)^{2}e_{j}$ and

$d_{ij}'= \dim_{F_{i}} e_{i}(\rad A)e_{j}/e_{i}(\rad A)^{2}e_{j}$.
\end{itemize}
It is known that $Q_{A}$ coincides with the $\Ext$-quiver of $A$. Namely, $Q_A$ contains a valued arrow
$ i \buildrel {\left(d_{ij},d_{ij}'\right)}\over {\hbox to 16mm{\rightarrowfill}} j$
iff $\Ext_A^{1}(S_{i},S_{j})\neq 0$ and $d_{ij}=\dim_{F_{j}}\Ext_{A}^{1}(S_{i},S_{j})$, $d_{ij}'=\dim_{F_{i}}\Ext_{A}^{1}(S_{i},S_{j})$.
An algebra $A$ is called \textit{triangular} provided its quiver $Q_{A}$ is acyclic (there is no oriented cycle in $Q_{A}$).
We shall identify an algebra $A$ with the associated category $A^{*}$ whose objects are the vertices $1,\dots,n$ of $Q_{A}$, $\Hom_{A^{*}}(i,j)=e_{j}Ae_{i}$
for any objects $i$ and $j$ of $A^{*}$, and the composition of morphisms in $A^{*}$ is given by the multiplication in $A$.
For a module $M$ in $\mo A$, we denote by $\supp(M)$ the full subcategory of $A=A^{*}$ given by all objects $i$ such that $Me_{i}\neq 0$, and call the \textit{support} of $M$.
More generally, for a translation subquiver $\mathcal{C}$ of $\Gamma_{A}$, we denote by $\supp(\mathcal{C})$ the full subcategory of $A$
given by all objects $i$ such that $Xe_{i}\neq 0$ for some indecomposable module $X$ in $\mathcal{C}$, and call it the \textit{support} of $\mathcal{C}$.
Then a module $M$ in $\mo A$ (respectively, a family of components $\mathcal{C}$ in $\Gamma_{A}$) is said to be \textit{sincere} if $\supp(M)=A$
(respectively, if $\supp(\mathcal{C})=A$). Finally, a full subcategory $B$ of $A$ is said to be a \textit{convex subcategory} of $A$ if every path in $Q_{A}$
with source and target in $B$ has all vertices in $B$. Observe that, for a convex subcategory $B$ of $A$, there is a fully faithful embedding of $\mo B$ into $\mo A$
such that $\mo B$ is the full subcategory of $\mo A$ consisting of the modules $M$ with $Me_{i}=0$ for all objects $i$ of $A$ which are not objects of $B$.

An essential role in further considerations will be played by the following result proved in \cite[Proposition 5.1]{MS1}.

\begin{prop} \label{prop21}
Let $A$ be an algebra and $X, Y$ be modules in $\ind A$. Then $X$ and $Y$ belong to the same component of $_c\Gamma_A$ if and only if there is an oriented
cycle in $\Gamma_A$ passing through $X$ and $Y$.
\end{prop}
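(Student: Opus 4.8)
The plan is to use, throughout, the elementary reformulation: for $Z, Z'$ in $\ind A$ there is an oriented cycle in $\Gamma_A$ passing through $Z$ and $Z'$ if and only if there exist paths of positive length $Z \to \cdots \to Z'$ and $Z' \to \cdots \to Z$ in $\Gamma_A$ — one concatenates the two paths to obtain such a cycle, and conversely splits a cycle through $Z$ and $Z'$ at these two vertices. In particular, the relation ``$Z$ and $Z'$ lie on a common oriented cycle of $\Gamma_A$'' is an equivalence relation on the set of cyclic vertices of $\Gamma_A$. Granting this, the implication ($\Leftarrow$) is immediate: given an oriented cycle $Z_0 \to Z_1 \to \cdots \to Z_n = Z_0$ with $X = Z_p$ and $Y = Z_q$, each vertex $Z_i$ lies on this cycle and is hence cyclic, so is a vertex of $_c\Gamma_A$, while each arrow $Z_i \to Z_{i+1}$ joins two cyclic vertices, so is an arrow of $_c\Gamma_A$ (forming $_c\Gamma_A$ from $\Gamma_A$ deletes only the arrows incident with an acyclic vertex); thus the whole cycle is a connected subquiver of $_c\Gamma_A$ containing $X$ and $Y$.

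For the implication ($\Rightarrow$), assume $X$ and $Y$ lie in the same component of $_c\Gamma_A$ and fix a walk $X = W_0, W_1, \ldots, W_k = Y$ in which consecutive terms are joined by an arrow of $_c\Gamma_A$; all the $W_i$ are cyclic. Because lying on a common oriented cycle is an equivalence relation on cyclic vertices, it suffices to treat the case $k = 1$, i.e.\ to prove the following Key Lemma: if cyclic vertices $U$ and $V$ of $\Gamma_A$ are joined by an arrow $U \to V$, then there is a path of positive length $V \to \cdots \to U$ (which, together with the arrow $U \to V$, puts $U$ and $V$ on a common oriented cycle). Composing the Key Lemma along the walk then finishes the argument.

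To prove the Key Lemma I would exploit the mesh structure of $\Gamma_A$: from the corresponding almost split sequences, an arrow $W \to W'$ with $W'$ non-projective produces an arrow $\tau W' \to W$, and an arrow $W \to W'$ with $W$ non-injective produces an arrow $W' \to \tau^{-1} W$. Fixing cyclic paths $U \to \cdots \to U$ and $V \to \cdots \to V$ and applying these rules to the arrow $U \to V$ and to the arrows of the two cyclic paths, one transports the cyclic paths along the $\tau$-orbits of $U$ and $V$ and produces new arrows linking suitable translates $\tau^i V$ to $U$ and $V$ to translates $\tau^j U$; the decisive step is to close these data up into a single path from $V$ to $U$. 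If the component $\mathcal{C}$ of $\Gamma_A$ containing $U$ and $V$ is semiregular, this is transparent: since $\mathcal{C}$ contains an oriented cycle, the results of Liu and Zhang recalled above identify $\mathcal{C}$ as a stable tube, a ray tube or a coray tube, and one closes the path explicitly by running once around the tube (through the projective or injective modules of $\mathcal{C}$ when they occur). In the remaining case the path has to be closed by a more delicate direct analysis of the meshes joining the $\tau$-orbits of $U$ and $V$. The main obstacle is precisely this closing-up step: transporting a path by $\tau^{\pm 1}$ is blocked exactly at the projective and injective modules, so the construction must be organised around their positions in the $\tau$-orbits of $U$ and $V$, which is the genuinely technical core of the proof and is what prevents a single uniform construction.
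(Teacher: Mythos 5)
Your reduction is sound as far as it goes: the reformulation of ``lying on a common oriented cycle'' via two paths of positive length, the observation that this is an equivalence relation on cyclic vertices, the easy implication ($\Leftarrow$), and the reduction of ($\Rightarrow$) along a walk in ${}_{c}\Gamma_A$ to the Key Lemma (an arrow $U\to V$ between cyclic vertices forces a path $V\to\cdots\to U$) are all correct. But the Key Lemma is not an auxiliary step — it \emph{is} the proposition, up to the bookkeeping you have done, and you do not prove it. The mesh-transport idea (an arrow $W\to W'$ with $W'$ nonprojective yields $\tau_AW'\to W$, etc.) only moves data along $\tau_A$-orbits until it hits a projective or an injective module; you acknowledge that the ``closing-up'' of these data into an actual path from $V$ to $U$ is blocked exactly there, defer the general (non-semiregular) case to ``a more delicate direct analysis'', and call it the genuinely technical core. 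A proof that explicitly leaves its technical core unexecuted has a genuine gap: nothing in your text rules out, say, a component containing two cyclic regions joined by an arrow $U\to V$ with no return path, which is precisely the configuration the proposition forbids. Even the semiregular case is only sketched (``running once around the tube'' in a ray or coray tube needs an argument that the return path avoids being trapped on an inserted ray), though that part is at least plausible.

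For comparison: the paper does not reprove this statement at all; it quotes it as \cite[Proposition 5.1]{MS1}, and the content of that cited result is exactly your Key Lemma together with the routine reductions you carried out. A complete argument along your lines would have to organize the positions of projective and injective modules in the $\tau_A$-orbits of $U$ and $V$, typically using the Bautista--Smal{\o} hook property (Proposition \ref{prop23n} in this paper, which lets one replace a cycle through a nonprojective module by its $\tau_A$-shift as in Lemma \ref{lem24}) and Liu's and Zhang's structure theorems for (semi)stable parts, to manufacture the return path $V\to\cdots\to U$. Until that construction is actually carried out, the proposal establishes only the trivial direction and a correct but standard reduction.
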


We prove now the following property of cycle-finite cyclic components.

\begin{prop} \label{prop22}
Let $A$ be an algebra and $\Gamma$ be a cycle-finite component of $_c\Gamma_A$. Then $\supp(\Gamma)$ is a convex subcategory of $A$.
\end{prop}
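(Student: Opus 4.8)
The plan is to show directly that every path in $Q_A$ with source and target in $\supp(\Gamma)$ has all its intermediate vertices in $\supp(\Gamma)$. Since $\Gamma$ is a cycle-finite component of $_c\Gamma_A$, every indecomposable module $X$ in $\Gamma$ lies on an oriented cycle in $\Gamma_A$ (by Proposition \ref{prop21}, applied with $Y=X$, or simply because $X$ is a vertex of $_c\Gamma_A$), and moreover every cycle in $\ind A$ through $X$ is finite. The key point I would exploit is that a vertex $j$ of $Q_A$ lies in $\supp(\Gamma)$ if and only if the simple module $S_j$ is a composition factor of some indecomposable module in $\Gamma$, equivalently $Xe_j\neq 0$ for some $X$ in $\Gamma$. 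So it suffices to fix an arrow $i\to j$ in $Q_A$ with $i\in\supp(\Gamma)$ (respectively $j\in\supp(\Gamma)$) lying on a path between two vertices of $\supp(\Gamma)$, and to produce an indecomposable module in $\Gamma$ whose support contains the relevant intermediate vertex.

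The main step is the following: if $X$ is in $\Gamma$ and $S_j$ is a composition factor of $X$ with an arrow $i\to j$ in $Q_A$, I want to conclude that (under the convexity hypothesis being built up) $S_i$ too is a composition factor of some module in $\Gamma$. Here I would use the interplay between nonzero homomorphisms and the arrows of $Q_A$: an arrow $i\to j$ means $\Ext_A^1(S_i,S_j)\neq 0$, so there are nonsplit extensions and hence nonzero non-isomorphisms relating projectives/injectives at $i$ and $j$. More concretely, I expect to argue via the projective cover $P(j)=e_jA$ and injective envelope: since $S_j$ occurs in $X$ there is a nonzero map $P(j)\to X$ or $X\to I(j)$, and the arrow $i\to j$ gives a nonzero non-isomorphism $P(i)\to P(j)$ (in $\rad_A$, in the class $e_j(\rad A)e_i$). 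Composing, one gets nonzero maps forming a walk; the point is to close this walk into an oriented cycle in $\ind A$ through both a module supported at $i$ and a module supported at $j$, using the hypothesis that $i$ and $j$ lie on a path joining two vertices of $\supp(\Gamma)$ — that path, together with modules in $\Gamma$ realising its endpoints in their support, yields a cycle of nonzero non-isomorphisms. Cycle-finiteness of $\Gamma$ then forces all the modules in this cycle to belong to a single cyclic component, and by the characterisation in Proposition \ref{prop21} that component must be $\Gamma$ itself, whence $i\in\supp(\Gamma)$.

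The hard part will be the bookkeeping that turns "$i$ lies on a path in $Q_A$ between vertices $a,b\in\supp(\Gamma)$" into an honest oriented cycle in $\ind A$ passing through an indecomposable module with $i$ in its support, while keeping all the composed homomorphisms genuinely nonzero and non-invertible. One delicate point is that a composition of nonzero radical maps can be zero, so I would set this up by induction along the path: at each arrow $i\to j$ of $Q_A$ I add one new indecomposable (a suitable projective, injective, or a module from $\Gamma$) and invoke the fact that an arrow forces a nonzero irreducible-up-to-radical map which does not factor through the previously accumulated maps; alternatively, one can lift the path in $Q_A$ to a walk in $\Gamma_A$ using that $\Gamma$ is a component of $_c\Gamma_A$ and that modules supported at the endpoints lie on oriented cycles. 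I would also need the observation, already recorded in the excerpt, that $Q_A$ is the $\Ext$-quiver, so that arrows correspond to nonvanishing of $\Ext^1$ between simples; combined with the long exact sequences this gives the required nonzero homomorphisms between indecomposables supported at adjacent vertices. Once the cycle is produced, finiteness of $\Gamma$ as a cycle-finite component and Proposition \ref{prop21} close the argument with no further calculation.
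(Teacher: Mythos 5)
Your overall strategy is the right one and, in outline, it is the paper's: reduce to a path in $Q_A$ between two vertices of $\supp(\Gamma)$, use the fact that $Q_A$ is the $\Ext$-quiver to realize the arrows of the path by nonzero nonisomorphisms between indecomposables, tie the two ends to modules of $\Gamma$ via a projective cover and an injective envelope, close up to a cycle using the path inside $\Gamma$ provided by Proposition \ref{prop21}, and invoke cycle-finiteness. Two small corrections first: a cycle only requires each map to be a nonzero nonisomorphism, not that composites be nonzero, so the difficulty you anticipate about vanishing compositions is moot; and an arrow $i\to j$ corresponds to $e_i(\rad A)e_j$, hence gives maps $P_j\to P_i$ (and $I_j\to I_i$), not $P(i)\to P(j)$. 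Also, your ``main step'' as stated (an arrow $i\to j$ with $j\in\supp(\Gamma)$ forces $i\in\supp(\Gamma)$) is false in general; one must first extract a subpath $i=i_0\to i_1\to\cdots\to i_s=j$ with $i,j\in\supp(\Gamma)$, $s\geq 2$, and all intermediate vertices outside $\supp(\Gamma)$, and refute that configuration.

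The genuine gap is exactly at the step you defer as ``bookkeeping'': how to close the path into a cycle in $\ind A$ through a module of $\Gamma$ and modules supported at the intermediate vertices. The devices you propose do not suffice: a chain $P_j\to P_{i_{s-1}}\to\cdots\to P_{i_1}\to P_i\to X$ (or its dual with injectives and $Y$) cannot be closed, since there is no way back from $\Gamma$ to the projectives, and a path in $Q_A$ does not lift to a walk in $\Gamma_A$. Worse, over $A$ itself there is in general no nonzero homomorphism $S_{i_1}\to P_i$ ($S_{i_1}$ is a composition factor of $\rad P_i/\rad^2P_i$, not necessarily a submodule of $P_i$), nor $I_j\to S_{i_{s-1}}$, and these are precisely the maps needed to ``turn the corners'' at the two ends. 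The paper supplies the missing ideas: along the interior of the path it uses the nonsplit extensions $0\to S_{i_t}\to L_t\to S_{i_{t-1}}\to 0$ to get length-two modules and nonzero nonisomorphisms $L_r\to L_{r-1}$ with image $S_{i_{r-1}}$; and, crucially, it passes to the quotient algebra $B=A/J$ with $J=Ae_i(\rad A)e_{i_1}(\rad A)+(\rad A)e_{i_{s-1}}(\rad A)e_jA$, which forces $S_{i_1}$ to be a direct summand of $\rad P_i^{*}$ and $S_{i_{s-1}}$ a direct summand of $I_j^{*}/S_j$ over $B$, while $\Gamma$ still consists of $B$-modules because the intermediate vertices avoid $\supp(\Gamma)$ (so the contradiction hypothesis is used in an essential way in the construction). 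This yields the cycle $X\to\cdots\to Y\to I_j^{*}\to S_{i_{s-1}}\to L_{s-1}\to\cdots\to L_2\to S_{i_1}\to P_i^{*}\to X$, which must be infinite because $S_{i_1}$ and $S_{i_{s-1}}$ cannot lie in $\Gamma$, contradicting cycle-finiteness. Without an idea playing the role of this quotient-algebra trick (or some substitute producing the two corner maps), your sketch cannot be completed as written.
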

\begin{proof}
Let $C = \supp(\Gamma)$. Assume to the contrary that $C$ is not a convex subcategory of $A$. Then $Q_A$ contains a path
\[
 i = i_0 \buildrel {\left(d_{i_0 i_1},d_{i_0 i_1}'\right)}\over {\hbox to 20mm{\rightarrowfill}} i_1
 \buildrel {\left(d_{i_1 i_2},d_{i_1 i_2}'\right)}\over {\hbox to 20mm{\rightarrowfill}} i_2
    \to \cdots \to i_{s-1}
    \buildrel {\left(d_{i_{s-1} i_s},d_{i_{s-1} i_s}'\right)}\over {\hbox to 26mm{\rightarrowfill}} i_s = j,
\]
with $s \geqslant 2$, $i,j$ in $C$ and $i_1,\dots,i_{s-1}$ not in $C$. Since $Q_A$ coincides with the $\Ext$-quiver of $A$, we have
$\Ext_A^1(S_{i_{t-1}}, S_{i_t}) \neq 0$ for $t \in \{ 1,\dots,s \}$. Then there exist in $\mo A$ nonsplitable exact sequences
\[
  0 \to
  S_{i_t} \to
  L_t \to
  S_{i_{t-1}} \to
  0,
\]
for $t \in \{ 1,\dots,s \}$. Clearly, $L_1, \dots, L_s$ are indecomposable modules in $\mo A$ of length 2. In particular, we obtain nonzero nonisomorphisms
$f_r : L_r \to L_{r-1}$ with $\Imi f_r = S_{i_{r-1}}$, for $r \in \{ 2,\dots,s \}$. Consider now the ideal $J$ in $A$ of the form
\[
  J = A e_i(\rad A) e_{i_1} (\rad A)
      + (\rad A) e_{i_{s-1}} (\rad A) e_j A
\]
and the quotient algebra $B = A/J$. Since $i_1$ and $i_{s-1}$ do not belong to $C = \supp(\Gamma)$, for any module $M$ in $\Gamma$,
we have $M e_{i_1} = 0$ and $M e_{i_{s-1}} = 0$, and consequently $M J = 0$. This shows that $\Gamma$ is a cyclic component of $\Gamma_B$.
Moreover, it follows from the definition of $J$ that $S_{i_1}$ is a direct summand of the radical $\rad P_i^{*}$ of the projective cover $P_i^{*} = e_i B$
of $S_i$ in $\mo B$ and $S_{i_{s-1}}$ is a direct summand of the socle factor $I_j^{*}/S_j$ of the injective envelope $I_j^{*} = D(B e_j)$ of $S_j$ in $\mo B$.
Further, since $i$ and $j$ are in $C$, there exist indecomposable modules $X$ and $Y$ in $\Gamma$ such that $S_i$ is a composition factor of $X$
and $S_j$ is a composition factor of $Y$. Then we infer that $\Hom_B(P_i^{*},X) \neq 0$ and $\Hom_B(Y,I_j^{*}) \neq 0$, because $\Gamma$ consists of $C$-modules,
and hence $B$-modules. It follows from Proposition \ref{prop21} that we have in $\Gamma$ a path from $X$ to $Y$. Therefore, we obtain in $\ind A$ a cycle
of the form
\[
 X \to \cdots
   \to Y
   \to I_j^{*}
   \to S_{i_{s-1}}
   \to L_{s-1}
   \to \cdots
   \to L_2
   \to S_{i_1}
   \to P_i^{*}
   \to X ,
\]
which is an infinite cycle, because $X$ and $Y$ belong to $\Gamma$ but $S_{i_1}$ and $S_{i_{s-1}}$ are not in $\Gamma$.
This contradicts the cycle-finiteness of $\Gamma$. Hence $C = \supp(\Gamma)$ is indeed a convex subcategory of $A$.
\end{proof}

Let $A$ be an algebra, $\Gamma$ be a component of $_c\Gamma_A$, and $A=P_{\Gamma}\oplus Q_{\Gamma}$ a decomposition of $A$ in $\ind A$
such that the simple summands of $P_{\Gamma}/\rad P_{\Gamma}$ are exactly the simple composition factors of the indecomposable modules in $\Gamma$.
Then there exists an idempotent $e_{\Gamma}$ of $A$ such that $P_{\Gamma}=e_{\Gamma}A$, $Q_{\Gamma}=(1-e_{\Gamma})A$, $t_A({\Gamma})=A(1-e_{\Gamma})A$,
and $e_{\Gamma}Ae_{\Gamma}$ is isomorphic to the endomorphism algebra $\End_A(P_{\Gamma})$. In follows from Proposition \ref{prop22} that $e_{\Gamma}$
is a convex idempotent of $A$. Observe also that $\End_A(P_{\Gamma})$ is the algebra of the support category $\supp(\Gamma)$ of $\Gamma$. The next result gives
another description of $\End_A(P_{\Gamma})$ in case the component $\Gamma$ of $_c\Gamma_A$ is cycle-finite.

\begin{prop} \label{prop23nn}
Let $A$ be an algebra and $\Gamma$ be a cycle-finite component of $_c\Gamma_A$. Consider a decomposition $A=P_{\Gamma}\oplus Q_{\Gamma}$ of $A$ in $\mo A$
such that the simple summands of $P_{\Gamma}/\rad P_{\Gamma}$ are exactly the simple composition factors of the indecomposable modules in $\Gamma$.
Then the algebras $\su(\Gamma)$ and $\End_A(P_{\Gamma})$ are isomorphic.
\end{prop}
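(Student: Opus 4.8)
The plan is to reduce the statement to a single vanishing identity among idempotent truncations of $A$ and then to invoke Proposition \ref{prop22}. Put $e = e_{\Gamma}$ and $C = \supp(\Gamma)$. By the discussion preceding the proposition, $P_{\Gamma} = eA$, $Q_{\Gamma} = (1-e)A$, $t_A(\Gamma) = A(1-e)A$, so that $\su(\Gamma) = A/A(1-e)A$, while $eAe$ is isomorphic to $\End_A(P_{\Gamma})$; hence it suffices to establish an algebra isomorphism $eAe \cong A/A(1-e)A$.

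First I would record the routine fact that $x \mapsto x + A(1-e)A$ defines a surjective algebra homomorphism $\psi\colon eAe \to A/A(1-e)A$: it is unital because $1-e \in A(1-e)A$, it is surjective because $a - eae \in A(1-e)A$ for every $a \in A$, and its kernel is $eAe \cap A(1-e)A = eA(1-e)Ae$. Consequently the whole proposition is equivalent to the claim $eA(1-e)Ae = 0$.

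This vanishing is the heart of the matter, and it is where Proposition \ref{prop22} enters. Writing $e = \sum_{i \in C} e_i$ and $1-e = \sum_{k \notin C} e_k$, one has $eA(1-e)Ae = \sum_{i,j \in C,\ k \notin C}(e_i A e_k)(e_k A e_j)$, so it is enough to prove that $(e_i A e_k)(e_k A e_j) = 0$ whenever $i,j \in C$ and $k \notin C$. Suppose not; then $e_i A e_k \neq 0$ and $e_k A e_j \neq 0$. Since $A$ is basic and $i \neq k$, we have $e_i A e_k = e_i(\rad A)e_k$, and since $\rad A$ is generated as a two-sided ideal by a set of representatives of the arrows of $Q_A$, every nonzero element of $e_i(\rad A)e_k$ is a sum of products of such representatives along paths in $Q_A$ from $i$ to $k$; in particular there is a path in $Q_A$ from $i$ to $k$, and similarly one from $k$ to $j$. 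Concatenating them yields a path in $Q_A$ with source $i \in C$ and target $j \in C$ passing through the vertex $k \notin C$, contradicting the convexity of $C = \supp(\Gamma)$ furnished by Proposition \ref{prop22}. Hence $eA(1-e)Ae = 0$, so $\psi$ is an isomorphism, and $\su(\Gamma) \cong eAe \cong \End_A(P_{\Gamma})$.

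I expect the only genuine difficulty to be conceptual rather than computational: one must recognize that the identity $\su(\Gamma) = eAe$ — which fails for an arbitrary idempotent $e$ — holds in the present situation precisely because $\supp(\Gamma)$ admits no detour in $Q_A$ through an outside vertex between two of its own vertices, that is, precisely because of the convexity statement of Proposition \ref{prop22}. The construction of $\psi$, the computation of its kernel, and the reduction $e_i A e_k = e_i(\rad A)e_k$ are all standard.
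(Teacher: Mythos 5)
Your proof is correct, and it reaches the same key vanishing statement as the paper's proof by a more ring-theoretic route. The paper identifies $\su(\Gamma)=A/t_A(\Gamma)$ with $\End_A(P_{\Gamma}/P_{\Gamma}t_A(\Gamma))$, uses projectivity of $P_{\Gamma}$ to lift endomorphisms and obtain a surjective algebra map $\End_A(P_{\Gamma})\to\End_A(P_{\Gamma}/P_{\Gamma}t_A(\Gamma))$, and proves injectivity by factoring a nonzero $f$ with $\Imi f\subseteq P_{\Gamma}t_A(\Gamma)$ through $Q_{\Gamma}^m$ and contradicting Proposition \ref{prop22}; unwinding the identifications $\End_A(P_{\Gamma})\cong eAe$ and $P_{\Gamma}t_A(\Gamma)=eA(1-e)A$, that injectivity claim is precisely your identity $eA(1-e)Ae=0$, so both arguments rest on the same input, the convexity of $\supp(\Gamma)$. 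What you do differently is to replace the projectivity-lifting step by the elementary computation of $\ker\psi$ via the Peirce decomposition, and your version is in one respect more explicit than the paper's: working with a fixed triple $(i,k,j)$ automatically produces a path in $Q_A$ from $i\in C$ to $j\in C$ through one and the same vertex $k\notin C$, whereas the paper's ``$u\neq 0$ and $v\neq 0$'' tacitly uses that a nonzero composite $vu$ must already be nonzero through a single indecomposable summand $e_kA$ of $Q_{\Gamma}^m$. One small wording caution: since $A$ is an artin algebra over a commutative artin ring and not necessarily a bound quiver algebra over a field, the phrase ``$\rad A$ is generated by representatives of the arrows'' should be replaced by the standard fact that $e_i(\rad A)e_k\neq 0$ with $i\neq k$ forces a path from $i$ to $k$ in $Q_A$ (if there were no such path, one shows $e_i(\rad A)e_k\subseteq(\rad A)^n$ for all $n$ by induction, using that $e_s(\rad A)e_t\subseteq(\rad A)^2$ whenever there is no arrow $s\to t$, and concludes by nilpotency of $\rad A$); with that adjustment your argument is complete.
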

\begin{proof}
Observe that the support algebra $\su(\Gamma)=A/t_A(\Gamma)$ is isomorphic to the endomorphism algebra $\End_A(P_{\Gamma}/P_{\Gamma}t_A(\Gamma))$.
Moreover, $P_{\Gamma}t_A(\Gamma)$ is the right $A$-submodule of $P_{\Gamma}$ generated by the images of all homomorphisms from $Q_{\Gamma}$ to $P_{\Gamma}$
in $\mo A$. For any homomorphism $f\in\End_A(P_{\Gamma})$ we have the canonical commutative diagram in $\mo A$ of the form
\[
\xymatrix{
0\ar[r]&P_{\Gamma}t_A(\Gamma)\ar[r]\ar[d]^{f'}&P_{\Gamma}\ar[r]\ar[d]^{f}&P_{\Gamma}/P_{\Gamma}t_A(\Gamma)\ar[r]\ar[d]^{\overline{f}}&0\cr
0\ar[r]&P_{\Gamma}t_A(\Gamma)\ar[r]&P_{\Gamma}\ar[r]&P_{\Gamma}/P_{\Gamma}t_A(\Gamma)\ar[r]&0,\cr
}
\]
where $f'$ is the restriction of $f$ to $P_{\Gamma}t_A(\Gamma)$ and $\overline{f}$ is induced by $f$. Clearly, by the projectivity of $P_{\Gamma}$ in $\mo A$,
every homomorphism $g\in \End_A(P_{\Gamma}/P_{\Gamma}t_A(\Gamma))$ is of the form $\overline{f}$ for some homomorphism $f\in\End_A(P_{\Gamma})$.
This shows that the assignment $f\to\overline{f}$ induces an epimorphism $\End_A(P_{\Gamma})\to\End_A(P_{\Gamma}/P_{\Gamma}t_A(\Gamma))$ of algebras.
Assume now that $\overline{f}=0$ for a homomorphism $f\in\End_A(P_{\Gamma})$. Then $\Imi f\subseteq P_{\Gamma}t_A(\Gamma)$. On the other hand, it follows from the
definition of $t_A(\Gamma)$ that there is an epimorphism $v: Q_{\Gamma}^m\to P_{\Gamma}t_A(\Gamma)$ in $\mo A$ for some positive integer $m$. Using the
projectivity of $P_{\Gamma}$ in $\mo A$, we conclude that there is a homomorphism $u: P_{\Gamma}\to Q_{\Gamma}^m$ such that $f=vu$. But $f\neq 0$ implies
that $u\neq 0$ and $v\neq 0$, and then a contradiction with the convexity of $\su(\Gamma)$ in $A=A^*$ established in Proposition \ref{prop22}. Hence $f=0$.
Therefore, the canonical epimorphism of algebras $\End_A(P_{\Gamma})\to\End_A(P_{\Gamma}/P_{\Gamma}t_A(\Gamma))$ is an isomorphism, and so the algebras
$\End_A(P_{\Gamma})$ and $\su(\Gamma)$ are isomorphic.
\end{proof}

The following fact proved by Bautista and Smal{\o} in \cite{BaSm} (see also \cite[Corollary III. 11.3]{SY}) will be essential for our considerations.
\begin{prop} \label{prop23n}
Let $A$ be an algebra and
$$X=X_0\to X_1\to \cdots\to X_{r-1}\to X_r=X$$
a cycle in $\Gamma_A$. Then there exists $i\in \{2, \ldots, r\}$ such that $\tau_AX_i\cong X_{i-2}$.
\end{prop}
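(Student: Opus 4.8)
The plan is to exploit the fact that in a cycle in $\Gamma_A$ the intermediate modules are linked by irreducible homomorphisms, and then to invoke the structure of almost split sequences. First I would look at an arrow $X_{i-1}\to X_i$ that is the "turning point" of the cycle, meaning a place where the path in $\Gamma_A$ fails to be sectional; more precisely, I would search for an index $i$ with $2\le i\le r$ such that $X_{i-2}\to X_{i-1}\to X_i$ is a path with $\tau_A X_i\cong X_{i-2}$, which is exactly the failure of sectionality at $X_{i-1}$. The key observation is that a cycle cannot be a sectional path all the way around: if it were, then by the standard no-return property of sectional paths (a sectional path cannot have $X_i\cong X_0$ unless it is trivial, equivalently $\rad^\infty$-arguments plus the sectionality of the composite), one would contradict that $X_0=X_r=X$ is a genuine cyclic return after $r\ge 1$ steps. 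Hence sectionality must break somewhere along the cycle, and by the characterization of where sectionality breaks in terms of the Auslander-Reiten translate, there is an index $i\in\{2,\dots,r\}$ with $\tau_A X_i\cong X_{i-2}$.

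Concretely, I would argue as follows. For each $i\in\{2,\dots,r\}$, consider the composite of irreducible homomorphisms $X_{i-2}\to X_{i-1}\to X_i$. If $X_i$ is nonprojective, the almost split sequence $0\to\tau_A X_i\to Y\to X_i\to 0$ shows that the irreducible map $X_{i-1}\to X_i$ lifts (up to the relevant multiplicity) to $\tau_A X_i$, and the path $X_{i-2}\to X_{i-1}\to X_i$ is sectional precisely when $\tau_A X_i\not\cong X_{i-2}$. Now suppose, for contradiction, that $\tau_A X_i\not\cong X_{i-2}$ for every $i\in\{2,\dots,r\}$; then the whole walk $X_0\to X_1\to\cdots\to X_r$ is sectional. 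A sectional path in $\Gamma_A$ has the property that the composite of the underlying irreducible homomorphisms does not lie in $\rad_A^{\infty}$ and, more to the point, that no two vertices on it are connected by the translation in the relevant range, which forces all the $X_i$ to be pairwise nonisomorphic — contradicting $X_0=X_r$. This contradiction yields the desired $i$.

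The main obstacle, and the step that needs the most care, is making precise the claim that a full cycle cannot be sectional and extracting the isomorphism $\tau_A X_i\cong X_{i-2}$ rather than merely "sectionality fails at $X_{i-1}$." Here I would lean on the Bautista–Smalø analysis of cycles of irreducible maps (the cited \cite{BaSm}, equivalently \cite[Corollary III.11.3]{SY}): their argument tracks the configuration of the cycle in the mesh structure of $\Gamma_A$ and shows that the first place where the path stops being sectional is exactly a mesh relation $X_{i-2}\to X_{i-1}\to X_i$ with $X_{i-1}$ a summand of the middle term of the almost split sequence ending at $X_i$, whence $\tau_A X_i\cong X_{i-2}$. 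One subtlety is the possibility that $X_i$ is projective (so no almost split sequence ends at it); but then $X_{i-1}$ is a summand of $\rad X_i$, the path cannot "continue sectionally" past a projective in the sense needed, and one reduces to a shorter cyclic walk or shifts attention to a nonprojective vertex on the cycle. I would handle this by noting that a cycle must contain a nonprojective module and organizing the induction on $r$ accordingly.
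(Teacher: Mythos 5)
The first thing to note is that the paper does not prove this proposition at all: it is quoted as a result of Bautista and Smal{\o}, with \cite{BaSm} and \cite[Corollary III.11.3]{SY} given as references, and is then used as a black box. So the only part of your proposal that matches the paper is the moment where you ``lean on the Bautista--Smal{\o} analysis'' --- but that is precisely the statement to be proved, so as a proof your sketch is circular at its decisive step. Concretely, the assertion that sectionality of the whole walk ``forces all the $X_i$ to be pairwise nonisomorphic,'' i.e.\ that a sectional path cannot return to its starting vertex, is not a standard auxiliary fact one may invoke: it \emph{is} the nonexistence of sectional cycles, which is the content of the proposition.

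The one concrete mechanism you offer does not close this gap. From the Bautista--Smal{\o} theorem on sectional paths (\cite[Theorem III.11.2]{SY}) the composite $f=f_r\cdots f_1\colon X_0\to X_0$ is nonzero; on the other hand $f\in\rad\End_A(X_0)$ is nilpotent, so one would like to iterate the cycle and contradict nilpotency. But the $m$-fold concatenation of the cycle is sectional only if, in addition to $\tau_AX_i\not\cong X_{i-2}$ for $i\in\{2,\dots,r\}$, one also has $\tau_AX_1\not\cong X_{r-1}$ at the wrap-around junction --- and this junction condition is not among the hypotheses you are negating, since the proposition locates the hook only at $i\in\{2,\dots,r\}$. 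Thus your argument yields at best the weaker, ``rotational'' statement: either a hook with $i\in\{2,\dots,r\}$ exists, or $\tau_AX_1\cong X_{r-1}$. Excluding the second alternative for the given starting vertex is exactly the nontrivial part of the Bautista--Smal{\o} induction, and your proposal replaces it by a citation of the result itself. (The worry about projective $X_i$ is a side issue: if $X_i$ is projective there is simply no hook at $i$, and no reduction to a nonprojective vertex is needed.) So either present the proposition, as the paper does, as a quotation of \cite{BaSm} and \cite[Corollary III.11.3]{SY}, or supply the genuinely missing argument for the junction case; as written, the sketch is not a proof.
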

\begin{lem} \label{lem24}
Let $A$ be an algebra and $\Gamma$ be a cyclic component of $\Gamma_A$. Assume that
$$X=X_0\to X_1\to \cdots\to X_{r-1}\to X_r=X$$
is a cycle in $\Gamma$. Then the following statements hold.
\begin{enumerate}
\renewcommand{\labelenumi}{\rm(\roman{enumi})}
\item If all modules $X_i$, $i\in \{1, \ldots, r\}$, are nonprojective, then $\Gamma$ contains a cycle of the form
$$\tau_AX=\tau_AX_0\to \tau_AX_1\to \cdots\to \tau_AX_{r-1}\to \tau_AX_r=\tau_AX.$$
\item If all modules $X_i$, $i\in \{1, \ldots, r\}$, are noninjective, then $\Gamma$ contains a cycle of the form
$$\tau_A^{-1}X=\tau_A^{-1}X_0\to \tau_A^{-1}X_1\to \cdots\to \tau_A^{-1}X_{r-1}\to \tau_A^{-1}X_r=\tau_A^{-1}X.$$
\end{enumerate}
\end{lem}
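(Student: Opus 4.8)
The plan is to obtain the translated cycle by a purely combinatorial manipulation inside the translation quiver $\Gamma_A$, using the defining bijections of a translation quiver arrow by arrow, and then to verify that the resulting cycle actually lies in the component $\Gamma$ by invoking Proposition~\ref{prop21}.

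I would prove (i) first. Since $X_0 = X_r = X$ and $r\in\{1,\dots,r\}$, the hypothesis forces all of $X_0,X_1,\dots,X_r$ to be nonprojective. Fix $i\in\{1,\dots,r\}$ and consider the arrow $X_{i-1}\to X_i$ of $\Gamma_A$. As $X_i$ is nonprojective, the bijection attached to the mesh ending at $X_i$ (between the arrows with target $X_i$ and the arrows with source $\tau_A X_i$) yields an arrow $\tau_A X_i\to X_{i-1}$ in $\Gamma_A$. Since $X_{i-1}$ is also nonprojective, applying the same kind of bijection once more, now to the arrow $\tau_A X_i\to X_{i-1}$ whose target $X_{i-1}$ is nonprojective, yields an arrow $\tau_A X_{i-1}\to \tau_A X_i$ in $\Gamma_A$. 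Letting $i$ run through $1,\dots,r$ gives the chain of arrows
\[ \tau_A X=\tau_A X_0\to \tau_A X_1\to\cdots\to \tau_A X_{r-1}\to \tau_A X_r=\tau_A X \]
in $\Gamma_A$, that is, an oriented cycle.

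It then remains to place this cycle in $\Gamma$. Each $\tau_A X_i$ lies on the oriented cycle just constructed, hence is a cyclic vertex of $\Gamma_A$; therefore all the $\tau_A X_i$, together with all the arrows between consecutive ones, belong to $_c\Gamma_A$ and, by Proposition~\ref{prop21}, to a single component $\Gamma'$ of $_c\Gamma_A$. On the other hand, the arrow $\tau_A X_1\to X_0$ produced above (the case $i=1$) joins the cyclic vertex $\tau_A X_1\in\Gamma'$ to the cyclic vertex $X_0=X\in\Gamma$, and it survives in $_c\Gamma_A$ precisely because both of its endpoints are cyclic; hence $\Gamma'=\Gamma$, and the cycle $\tau_A X_0\to\cdots\to \tau_A X_r=\tau_A X_0$ is a cycle in $\Gamma$, which proves (i). The only step needing a little care is exactly this last one: one must make sure the translated cycle sits in $\Gamma$ and not in some other cyclic component, and the arrow $\tau_A X_1\to X_0$ is what guarantees it.

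Finally, (ii) is the dual statement and I would prove it by the dual argument: now all of $X_0,\dots,X_r$ are noninjective, and for each arrow $X_{i-1}\to X_i$ one applies the dual mesh bijection (between the arrows with source a noninjective vertex $z$ and the arrows with target $\tau_A^{-1}z$) twice, first to get an arrow $X_i\to \tau_A^{-1}X_{i-1}$ and then, since $X_i$ is noninjective, an arrow $\tau_A^{-1}X_{i-1}\to \tau_A^{-1}X_i$; stringing these together produces the oriented cycle $\tau_A^{-1}X_0\to\cdots\to \tau_A^{-1}X_r=\tau_A^{-1}X_0$ in $\Gamma_A$, and the arrow $X_1\to \tau_A^{-1}X_0$ together with Proposition~\ref{prop21} shows, exactly as in (i), that this cycle lies in $\Gamma$.
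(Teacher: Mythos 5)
Your proof is correct, and it reaches the conclusion by a slightly different route than the paper. Both arguments produce the translated cycle in $\Gamma_A$ from the same standard fact about meshes (an arrow $X_{i-1}\to X_i$ with $X_i$ and $X_{i-1}$ nonprojective yields $\tau_AX_{i-1}\to\tau_AX_i$, and dually), which the paper uses implicitly. The difference lies in how the translated cycle is placed inside $\Gamma$: the paper invokes Proposition~\ref{prop23n} (Bautista--Smal{\o}), stated immediately beforehand for exactly this purpose, to find an index $i$ with $\tau_AX_i\cong X_{i-2}$, so that the translated cycle shares an actual vertex with the original one and hence lies in the same component of $_c\Gamma_A$; you instead exploit the auxiliary arrows $\tau_AX_i\to X_{i-1}$ that your mesh construction produces anyway, observing that the arrow $\tau_AX_1\to X_0$ joins two cyclic vertices, therefore survives in $_c\Gamma_A$ and forces the component of the translated cycle to coincide with $\Gamma$ (your appeal to Proposition~\ref{prop21} to see that the $\tau_AX_i$ lie in a single component is fine, though even that follows directly from the definition of $_c\Gamma_A$, since the translated cycle is a connected subquiver all of whose vertices are cyclic). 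Your version is more elementary in that it dispenses with the Bautista--Smal{\o} result altogether and uses only translation-quiver combinatorics plus the definition of cyclic components; the paper's version is marginally shorter on the page because Proposition~\ref{prop23n} is already available, and it has the small bonus of exhibiting a concrete module common to the two cycles. Your identification of the containment step as the only delicate point is accurate, and the dual argument for (ii) is handled correctly.
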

\begin{proof}
It follows from Proposition \ref{prop23n} that there exists $i\in \{2, \ldots, r\}$ such that $\tau_AX_i$ $= X_{i-2}$, or equivalently,
$X_i = \tau_A^{-1}X_{i-2}$. Hence, if all modules $X_i$, $i\in \{1, \ldots, r\}$, are nonprojective, then we have in $\Gamma_A$ a cycle
$$\tau_AX=\tau_AX_0\to \tau_AX_1\to \cdots\to \tau_AX_{i} \to\cdots\to \tau_AX_{r-1}\to \tau_AX_r=\tau_AX$$
with $\tau_AX_i = X_{i-2}$, and hence all modules of this cycle belong to the cyclic component $\Gamma$ containing $X_{i-2}$.
Similarly, if all modules $X_i$, $i\in \{1, \ldots, r\}$, are noninjective, then we have in $\Gamma_A$ a cycle
$$\tau_A^{-1}X=\tau_A^{-1}X_0\to \tau_A^{-1}X_1 \to\cdots\to \tau_A^{-1}X_{i-2} \to\cdots\to \tau_A^{-1}X_{r-1}\to \tau_A^{-1}X_r=\tau_A^{-1}X$$
with $X_i = \tau_A^{-1}X_{i-2}$, and hence all modules of this cycle belong to the cyclic component $\Gamma$ containing $X_{i}$.
\end{proof}
\begin{cor} \label{cor25}
Let $A$ be an algebra and $\Gamma$ a finite cyclic component of $\Gamma_A$. Then $\Gamma$ contains a projective and an injective module.
\end{cor}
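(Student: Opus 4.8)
The plan is to argue by contradiction: suppose $\Gamma$ contains no projective module, and aim to reach a contradiction. The first step is to observe that then $\tau_A$ restricts to a bijection $\Gamma\to\Gamma$. Indeed, given $X\in\Gamma$, Proposition \ref{prop21} furnishes an oriented cycle $X=X_0\to\cdots\to X_r=X$ lying entirely in $\Gamma$; since $\Gamma$ has no projective, all the $X_i$ are nonprojective, so Lemma \ref{lem24}(i) produces an oriented cycle $\tau_AX_0\to\cdots\to\tau_AX_r=\tau_AX$ in $\Gamma$, whence $\tau_AX\in\Gamma$. Thus $\tau_A$ maps $\Gamma$ into $\Gamma$; it is injective on $\Gamma$ because $D\Tr$ is injective on isomorphism classes of indecomposable nonprojective modules, and surjective because $\Gamma$ is finite. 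Consequently $\tau_A^{-1}X$ is defined for every $X\in\Gamma$, so $\Gamma$ contains no injective module either, and hence $\Gamma$ is a stable translation quiver.

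The second step is to show that $\Gamma$ is in fact a (full) connected component of $\Gamma_A$. Let $\mathcal C$ be the component of $\Gamma_A$ containing $\Gamma$, and suppose $\mathcal C\neq\Gamma$; by connectedness there is an arrow of $\Gamma_A$ joining a vertex of $\Gamma$ to a vertex outside $\Gamma$. Consider first an arrow $X\to Z$ with $X\in\Gamma$ and $Z\notin\Gamma$. Since $X$ is noninjective, the mesh structure of $\Gamma_A$ yields an arrow $Z\to\tau_A^{-1}X$, and $\tau_A^{-1}X\in\Gamma$. As $X$ and $\tau_A^{-1}X$ lie in the same component $\Gamma$ of $_c\Gamma_A$, Proposition \ref{prop21} provides a path $\tau_A^{-1}X\to\cdots\to X$ inside $\Gamma$, and concatenating gives an oriented cycle $Z\to\tau_A^{-1}X\to\cdots\to X\to Z$ through $Z$. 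Hence $Z$ lies on an oriented cycle, so $Z\in{}_c\Gamma_A$, and by Proposition \ref{prop21} $Z$ belongs to the component of $_c\Gamma_A$ containing $X$, that is, $Z\in\Gamma$ --- a contradiction. The case of an arrow $Z\to X$ with $X\in\Gamma$, $Z\notin\Gamma$ is handled dually, using that $X$ is nonprojective to obtain the arrow $\tau_AX\to Z$ together with a path $X\to\cdots\to\tau_AX$ in $\Gamma$. Therefore $\mathcal C=\Gamma$.

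The third step concludes the contradiction: $\Gamma=\mathcal C$ is now a connected component of $\Gamma_A$ containing neither a projective nor an injective module, hence a regular component, and it contains an oriented cycle; by the result of Liu \cite{Li1} and Zhang \cite{Zh} quoted above, $\mathcal C$ must then be a stable tube, which is infinite, contradicting the finiteness of $\Gamma$. This shows that $\Gamma$ contains a projective module. Applying this conclusion to the opposite algebra $A^{\op}$ --- under which $_c\Gamma_{A^{\op}}$ is $_c\Gamma_A$ with arrows reversed and projective and injective modules interchanged --- shows that $\Gamma$ also contains an injective module.

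I expect the middle step to be the main obstacle: showing that no arrow of $\Gamma_A$ can leave or enter $\Gamma$. The point is to use the stability of $\Gamma$ (absence of injectives and of projectives) to convert any such boundary arrow into an oriented cycle through the outside vertex via the mesh relations, and then to invoke Proposition \ref{prop21} to pull that vertex back into $\Gamma$; one has to treat both orientations of the arrow and check that the paths supplied by Proposition \ref{prop21} genuinely remain inside $\Gamma$.
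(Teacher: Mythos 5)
Your proof is correct, and its first half coincides with the paper's: assuming $\Gamma$ contains no projective module, both arguments use Lemma \ref{lem24} together with Proposition \ref{prop21} to see that $\tau_A$ maps $\Gamma$ into itself, hence (by finiteness) bijectively, and then conclude that $\Gamma$ must be an entire component of $\Gamma_A$ --- a step the paper merely asserts, whereas you justify it explicitly with the mesh argument turning any boundary arrow into an oriented cycle through the outside vertex and then pulling that vertex into $\Gamma$ via Proposition \ref{prop21}. Where you genuinely diverge is the closing contradiction: the paper passes to the block $B$ of $A$ supporting $\Gamma$ and invokes Auslander's theorem (\cite[Theorem III.10.2]{SY}) that a finite component of $\Gamma_B$ is all of $\Gamma_B$, which contains projective modules; this needs neither the absence of injectives in $\Gamma$ nor any classification of regular components. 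You instead note that $\Gamma$ is also injective-free (every vertex lies in the image of $\tau_A$), hence a finite regular component containing an oriented cycle, and appeal to the Liu--Zhang result \cite{Li1}, \cite{Zh} quoted in Section \ref{main}, under which it would be an (infinite) stable tube. Both closings are legitimate: yours trades Auslander's finiteness theorem for the classification of regular components with oriented cycles, at the cost of the extra (easy) verification that $\Gamma$ is stable; the paper's route gets the contradiction directly from the existence of projectives in $\Gamma_B$. Your deduction of the injective statement by passing to $A^{\op}$ is the same duality the paper compresses into ``the proof is similar''.
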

\begin{proof}
Assume $\Gamma$ does not contain a projective module. Then it follows from Lemma \ref{lem24} that, for any indecomposable module $X$ in $\Gamma$, $\tau_AX$
is also a module in $\Gamma$. Since $\Gamma$ is a finite translation quiver, this implies that $\Gamma = \tau_A\Gamma$, and hence $\Gamma$ is a component
of $\Gamma_A$. Then there exists an indecomposable algebra $B$ (a block of $A$) such that $\Gamma$ is a component of $\Gamma_B$, and consequently
$\Gamma = \Gamma_B$, by the well known theorem of Auslander (see \cite[Theorem III. 10.2]{SY}). But this is a contradiction, because $\Gamma_B$ contains
projective modules. Therefore, $\Gamma$ contains a projective module. The proof that $\Gamma$ contains an injective module is similar.
\end{proof}

Let $A$ be an algebra and $\mathcal C$ a component of $\Gamma_A$. We denote by ${}_l\mathcal{C}$ the \textit{left stable part} of $\mathcal C$ obtained by removing in $\mathcal C$
the $\tau_A$-orbits of projective modules and the arrows attached to them, and by ${}_r\mathcal{C}$ the \textit{right stable part} of $\mathcal C$ obtained
by removing in $\mathcal C$ the $\tau_A$-orbits of injective modules and the arrows attached to them. We note that, if $\mathcal C$ is infinite, then ${}_l\mathcal{C}$
or ${}_r\mathcal{C}$ is nonempty.

The following proposition will be applied in the proofs of our main theorems.
\begin{prop} \label{prop23}
Let $A$ be an algebra, $\mathcal C$ a component of $\Gamma_A$, and $\Sigma$ an infinite family of cycle-finite modules in $\mathcal C$.
Then one of the following statements hold.
\begin{enumerate}
\renewcommand{\labelenumi}{\rm(\roman{enumi})}
\item The stable part ${}_s\mathcal{C}$ of $\mathcal{C}$ contains a stable tube $\mathcal{D}$ having infinitely many modules from $\Sigma$.
\item The left stable part ${}_l\mathcal{C}$ of $\mathcal{C}$ contains a component $\mathcal{D}$ with an oriented cycle and an injective module
such that the cyclic part ${}_c\mathcal{D}$ of $\mathcal{D}$ contains infinitely many modules from $\Sigma$.
\item The right stable part ${}_r\mathcal{C}$ of $\mathcal{C}$ contains a component $\mathcal{D}$ with an oriented cycle and a projective module
such that the cyclic part ${}_c\mathcal{D}$ of $\mathcal{D}$ contains infinitely many modules from $\Sigma$.
\end{enumerate}
\end{prop}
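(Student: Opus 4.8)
The plan is to analyze the stable part ${}_s\mathcal{C}$ of $\mathcal{C}$ together with the cyclic structure carried by the infinite family $\Sigma$, and to split according to whether $\mathcal{C}$ is regular, and if not, which one-sided infinity occurs. First I would invoke Proposition \ref{prop21}: since every module in $\Sigma$ is cycle-finite and lies on an oriented cycle (by the refinement of a finite cycle to irreducible maps), all of $\Sigma$ lies in a single cyclic component, so $\Sigma \subseteq {}_c\mathcal{C}$. Next, because $\Sigma$ is infinite and $\mathcal{C}$ has only finitely many $\tau_A$-orbits of projective and of injective modules, some $\tau_A$-orbit through a module of $\Sigma$ is infinite in one direction; equivalently ${}_l\mathcal{C}$ or ${}_r\mathcal{C}$ is nonempty and contains infinitely many modules of $\Sigma$. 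I would then pass to the connected component $\mathcal{D}$ of the relevant stable part containing infinitely many modules of $\Sigma$.

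The core of the argument is to show such a $\mathcal{D}$ must satisfy (i), (ii) or (iii). Here I would use Lemma \ref{lem24}: a cycle in $\mathcal{D}$ lifts to a $\tau_A$- or $\tau_A^{-1}$-translated cycle provided no module on it is projective, respectively injective. Suppose first $\mathcal{D}$ is a component of the full stable part ${}_s\mathcal{C}$ (so neither projective nor injective modules lie in $\mathcal{D}$). Then every cycle through a module of $\Sigma$ translates both ways by $\tau_A$, so the whole $\tau_A$-orbit of such a module lies in ${}_c\mathcal{D}$; hence $\mathcal{D}$ is an infinite regular component of $\Gamma_A$ (for the corresponding block) containing an oriented cycle, and by the Liu--Zhang structure theorem recalled in Section \ref{main}, $\mathcal{D}$ is a stable tube — giving (i). If instead $\mathcal{D}$ lies only in ${}_l\mathcal{C}$ but not in ${}_s\mathcal{C}$, then $\mathcal{D}$ contains an injective module; Lemma \ref{lem24}(i) still applies to cycles avoiding projectives (there are none in a left stable part), so the cyclic part ${}_c\mathcal{D}$ is closed under $\tau_A$ and carries infinitely many modules of $\Sigma$ — this is case (ii). The dual reasoning, using Lemma \ref{lem24}(ii) and the fact that ${}_r\mathcal{C}$ has no injectives, produces case (iii).

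The remaining point is to make precise why $\mathcal{D}$ genuinely contains an oriented cycle rather than merely infinitely many modules of $\Sigma$ that individually sit on cycles of $\mathcal{C}$ which might leave $\mathcal{D}$. The observation is that each module $X \in \Sigma \cap \mathcal{D}$ lies on an oriented cycle of $\mathcal{C}$ by Proposition \ref{prop21}, and Proposition \ref{prop23n} forces a $\tau_A$-coincidence $\tau_A X_i \cong X_{i-2}$ on that cycle; iterating, one pushes the cycle (or a translate of it) entirely into the stable part under consideration using Lemma \ref{lem24}, so the oriented cycle actually lies inside ${}_c\mathcal{D}$. I expect the main obstacle to be precisely this bookkeeping: ensuring that the translated cycle stays within the chosen stable part at every step, and handling the case where a cycle through $X$ repeatedly meets the boundary $\tau_A$-orbits of projectives or injectives — one must argue that, since there are only finitely many such orbits and $\Sigma$ is infinite, after discarding finitely many modules of $\Sigma$ every remaining one lies on a cycle confined to the interior, so the cofiniteness and the "infinitely many modules from $\Sigma$" clauses in (i)--(iii) are met simultaneously.
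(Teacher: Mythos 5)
Your outline reproduces some of the paper's ingredients (passing to ${}_l\mathcal{C}$, ${}_r\mathcal{C}$, ${}_s\mathcal{C}$, the Liu--Zhang structure theory, Lemma \ref{lem24} and Proposition \ref{prop23n}), but the two steps that carry the real weight are missing or fail as stated. First, you never justify the existence of a single connected component $\mathcal{D}$ of the relevant semi-stable part containing infinitely many modules of $\Sigma$. The paper obtains this through a dichotomy on $\tau_A$-orbits which your sketch omits: either some orbit $\mathcal{O}$ meets $\Sigma$ infinitely often --- then the chosen modules lie in one orbit, hence in one component of ${}_l\mathcal{C}$ or ${}_r\mathcal{C}$, and (crucially) that component contains no $\tau_A$-periodic module, which is what later forces case (ii) or (iii) rather than (i) --- or every orbit meets $\Sigma$ only finitely often, and one works inside ${}_s\mathcal{C}$ toward case (i). Your substitute claim that ``some $\tau_A$-orbit through a module of $\Sigma$ is infinite in one direction'' is false in general (if $\mathcal{C}$ is a stable tube, every orbit is finite), and the assertion that all of $\Sigma$ lies in a single cyclic component does not follow from Proposition \ref{prop21}. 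Also, a component of ${}_s\mathcal{C}$ is not a regular component of $\Gamma_A$ for any block of $A$; to conclude ``stable tube'' one must invoke Zhang's theorem on stable translation quivers (\cite{Zh}), which is what the paper does, rather than the regular-component theorem recalled in Section \ref{main}.

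Second, and more seriously, the case where $\mathcal{D}$ is acyclic is not handled. Your mechanism --- take a cycle in $\mathcal{C}$ through $X\in\Sigma\cap\mathcal{D}$ and ``push it into $\mathcal{D}$'' by iterating Lemma \ref{lem24} --- does not work: Lemma \ref{lem24} translates a cycle only when \emph{all} of its modules are nonprojective (respectively noninjective), while the cycles furnished by Proposition \ref{prop21} may leave $\mathcal{D}$ and pass through the projective or injective orbits; discarding finitely many modules of $\Sigma$ does not help, since every cycle through the remaining modules may still run through the finitely many projectives and injectives of $\mathcal{C}$. The paper rules out the acyclic alternative by a genuinely different argument: if $\mathcal{D}$ were acyclic, it embeds by \cite{Li2} and \cite{Zh} into ${\Bbb Z}\Delta$ as a full translation subquiver closed under predecessors; in the orbit-finite case one further chooses $M\in\Sigma\cap\mathcal{D}$ whose orbit has distance at least $2n$ (with $n$ the rank of $K_0(A)$) from the nonstable modules and uses \cite[Lemma 1.5]{CS} to produce paths from $M$ to $\tau_A^sM$ in $\ind A$ lying in $\mathcal{C}$, hence (by cycle-finiteness of $M$) oriented cycles in $\mathcal{C}$ through every $\tau_A^sM$; but for large $m$ the module $\tau_A^mM$ is not a successor of any projective in $\mathcal{C}$ and therefore cannot lie on an oriented cycle --- a contradiction. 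Likewise, in cases (ii) and (iii) the existence of an injective (respectively projective) module in $\mathcal{D}$ and the fact that infinitely many $\Sigma$-modules lie in the cyclic part ${}_c\mathcal{D}$ come from Liu's description of semi-stable components containing oriented cycles but no periodic modules (the infinite sectional paths ending in an injective, every module lying in the orbit of a module on such a path), not from the soft translation argument you propose. The points you defer as ``bookkeeping'' are exactly where the proof requires these additional ideas, so the proposal has genuine gaps.
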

\begin{proof}
(1) Assume first that there is a $\tau_A$-orbit $\mathcal O$ in $\mathcal C$ containing infinitely many modules from $\Sigma$. Consider the case
when $\mathcal O$ contains infinitely many left stable modules from $\Sigma$. Then there exist a module $M$ in ${\mathcal O}\cap\Sigma$ and an infinite
sequence $0=r_0<r_1<r_2<\ldots$ of integers such that the modules $\tau_A^{r_i}M$, $i\in{\Bbb N}$, belong to ${\mathcal O}\cap\Sigma$. Let $\mathcal D$
be the component of ${}_l\mathcal{C}$ containing the modules $\tau_A^{r_i}M$, $i\in{\Bbb N}$. We have two cases to consider.

Assume $\mathcal D$ contains an oriented cycle. Observe that $\mathcal D$ is not a stable tube, and hence does not contain a $\tau_A$-periodic module,
because $\mathcal D$ contains infinitely many modules from the $\tau_A$-orbit $\mathcal O$. Hence, applying \cite[Lemma 2.2 and Theorem 2.3]{Li2}, we
conclude that $\mathcal D$ contains an infinite sectional path
$$\cdots\to\tau_A^tX_s\to\cdots\to\tau_A^tX_2\to\tau_A^tX_1\to X_s\to\cdots\to X_2\to X_1,$$
where $t>s\geq 1$, $X_i$ is an injective module for some $i\in\{1, \ldots, s\}$, and each module in $\mathcal D$ belongs to the $\tau_A$-orbit of one
of the modules $X_i$. Clearly, then there is a nonnegative integer $m$ such that all modules $\tau_A^rM$, $r\geq m$, belong to the cyclic part
${}_c\mathcal{D}$ of $\mathcal{D}$. Therefore, the statement (ii) holds.

Assume $\mathcal{D}$ is acyclic. Then it follows from \cite[Theorem 3.4]{Li2} that there is an acyclic locally finite valued quiver $\Delta$
such that $\mathcal{D}$ is isomorphic to a full translation subquiver of ${\Bbb Z}\Delta$, which is closed under predecessors. But then there exists
a positive integer $i$ such that $\tau_A^{r_i}M$ is not a successor of a projective module in $\mathcal{C}$, and consequently does not lie on an oriented
cycle in $\mathcal{C}$. On the other hand, $\tau_A^{r_i}M$ belongs to $\Sigma$, and then is a cycle-finite indecomposable module, so lying on a cycle in
$\mathcal{C}$, a contradiction.

Similarly, if $\mathcal{O}$ contains infinitely many right stable modules from $\Sigma$, then the statement (iii) holds.

(2) Assume now that every $\tau_A$-orbit in $\mathcal C$ contains at most finitely many modules from $\Sigma$. Since $\Sigma$ is an infinite family
of modules, we infer that there is an infinite component $\mathcal{D}$ of the stable part ${}_s\mathcal{C}$ of $\mathcal{C}$ containing infinitely
many modules from $\Sigma$. We have two cases to consider.

Assume $\mathcal D$ contains an oriented cycle. Then it follows from \cite[Corollary]{Zh} (see also \cite[Theorems 2.5 and 2.7]{Li1}) that $\mathcal D$
is a stable tube. Thus the statement (i) holds.

Assume $\mathcal{D}$ is acyclic. Applying \cite[Corollary]{Zh} again, we conclude that there exists an infinite locally finite acyclic valued quiver
$\Delta$ such that $\mathcal{D}$ is isomorphic to the translation quiver ${\Bbb Z}\Delta$. Let $n$ be the rank of the Grothendieck group $K_0(A)$ of $A$.
Then there is a module $M$ in ${\mathcal D}\cap\Sigma$ such that the length of any walk in ${\mathcal C}$ from a nonstable module in ${\mathcal C}$ to
a module in the $\tau_A$-orbit ${\mathcal O}(M)$ of $M$ is at least $2n$. Then it follows from \cite[Lemma 1.5]{CS} (see also \cite[Lemma 4]{Sk4})
that, for each positive integer $s$, there exists a path
$$M=X_0\to X_1\to \cdots\to X_t=\tau_A^sM$$
in $\ind A$ with all modules $X_i$ in ${\mathcal C}$, and consequently a cycle in $\ind A$ passing through $M$ and $\tau_A^sM$, because there is a path
$$\tau_A^sM=Y_0\to Y_1\to \cdots\to Y_r=M$$
of irreducible homomorphisms in $\ind A$. Moreover, $M$ is a cycle-finite module, as a module from $\Sigma$. This shows that ${\mathcal C}$ contains
oriented cycles passing through $M$ and any module $\tau_A^sM$, $s\geq 1$. We also note that there is a component $\mathcal{D}'$ of the
left stable part ${}_l\mathcal{C}$ of $\mathcal{C}$ containing all $\tau_A$-orbits of $\mathcal{D}$. Then there is an infinite locally finite acyclic
valued subquiver $\Delta'$ containing $\Delta$ as a full valued subquiver, such that $\mathcal{D}'$ is isomorphic to a full translation subquiver of ${\Bbb Z}\Delta'$,
which is closed under predecessors. Then there exists a positive integer $m$ such that the module $\tau_A^mM$ is not a successor of a projective module
in ${\mathcal C}$, and then $\tau_A^mM$ does not lie on an oriented cycle in ${\mathcal C}$, a contradiction.
\end{proof}
\begin{cor} \label{cor28}
Let $A$ be an algebra and $\Gamma$ be a cycle-finite infinite component of $_c\Gamma_A$. Then ${}_l\Gamma$ or ${}_r\Gamma$ admits
a component $\mathcal{D}$ containing an oriented cycle and infinitely many modules of $\Gamma$.
\end{cor}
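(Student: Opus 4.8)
The plan is to deduce Corollary \ref{cor28} directly from Proposition \ref{prop23} by feeding in the right infinite family $\Sigma$ of cycle-finite modules lying in the given cyclic component $\Gamma$. First I would observe that, since $\Gamma$ is a cycle-finite \emph{infinite} component of $_c\Gamma_A$, all its modules are cycle-finite indecomposable modules, and $\Gamma$ is contained in a (unique) component $\mathcal{C}$ of $\Gamma_A$ — or possibly meets several components of $\Gamma_A$, but since $\Gamma$ is connected in $_c\Gamma_A$ it lies inside one component $\mathcal{C}$ of $\Gamma_A$ (the cyclic part $_c\mathcal{C}$ of that component contains $\Gamma$ as a union of connected components, and by Proposition \ref{prop21} a cyclic component of $\Gamma_A$ is a connected component of $_c\Gamma_A$, so in fact $\Gamma$ is a whole connected component of $_c\mathcal{C}$). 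Set $\Sigma = \Gamma$, viewed as an infinite family of cycle-finite modules in $\mathcal{C}$. Apply Proposition \ref{prop23}: one of its three alternatives (i), (ii), (iii) holds.

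Next I would rule out alternative (i), or rather absorb it into the conclusion. In case (i), the stable part $_s\mathcal{C}$ contains a stable tube $\mathcal{D}$ having infinitely many modules from $\Sigma = \Gamma$. A stable tube contains an oriented cycle, so $\mathcal{D}$ itself (or the component of $_l\mathcal{C}$ it sits in, which still contains a stable tube hence oriented cycles) is a component of $_l\mathcal{C}$ (equivalently of $_r\mathcal{C}$, since a stable tube has no projective or injective modules) containing an oriented cycle and infinitely many modules of $\Gamma$; indeed $_c\mathcal{D} = \mathcal{D}$ and the infinitely many $\Sigma$-modules in $\mathcal{D}$ are modules of $\Gamma$ (being cycle-finite and lying on oriented cycles in $\mathcal{C}$, hence in $_c\Gamma_A$, and in the same connected component as the rest of $\Gamma$). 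In case (ii), the left stable part $_l\mathcal{C}$ contains a component $\mathcal{D}$ with an oriented cycle and an injective module whose cyclic part $_c\mathcal{D}$ contains infinitely many modules from $\Sigma = \Gamma$; these modules lie on oriented cycles in $\mathcal{C}$, hence belong to $_c\Gamma_A$ and — being connected to $\Gamma$ through $\mathcal{C}$ — lie in $\Gamma$. Symmetrically, case (iii) gives the desired component inside $_r\Gamma$ (using the right stable part). In every case we obtain a component $\mathcal{D}$ of $_l\mathcal{C}$ or of $_r\mathcal{C}$ — equivalently of $_l\Gamma$ or $_r\Gamma$, once we intersect with $\Gamma$ — containing an oriented cycle and infinitely many modules of $\Gamma$, which is exactly the assertion.

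The one technical point that needs care — and which I expect to be the main (minor) obstacle — is the bookkeeping identifying "a component of $_l\mathcal{C}$ (or $_r\mathcal{C}$) containing infinitely many $\Sigma$-modules" with "a component of $_l\Gamma$ (or $_r\Gamma$) containing infinitely many modules of $\Gamma$". Here I would argue that any module $X$ of $\Sigma = \Gamma$ that lies on an oriented cycle in $\mathcal{C}$ automatically lies in $_c\Gamma_A$, and since it is already in the cycle-finite cyclic component $\Gamma$ (which is a full connected component of $_c\mathcal{C}$), the relevant oriented cycle through $X$ stays inside $\Gamma$; thus $_l\Gamma$ and $_r\Gamma$ inherit from $_l\mathcal{C}$ and $_r\mathcal{C}$ the components produced by Proposition \ref{prop23}. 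The finitely many modules possibly lost when passing from $\mathcal{C}$ to $\Gamma$ (acyclic vertices, projective/injective orbits) do not affect the \emph{infiniteness} of the families involved. No genuinely new idea beyond Proposition \ref{prop23} is required; the corollary is essentially a repackaging of its statement for the special case $\Sigma = \Gamma$.
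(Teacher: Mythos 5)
Your proposal is correct and is essentially the paper's own (implicit) argument: the corollary is given no separate proof there, being an immediate specialization of Proposition \ref{prop23} to the infinite family $\Sigma=\Gamma$ of cycle-finite modules inside the ambient component $\mathcal{C}$ of $\Gamma_A$, which is exactly what you do, with case (i) absorbed since a stable tube lies in the (left and right) stable parts and is itself cyclic. Your additional bookkeeping passing from components of ${}_l\mathcal{C}$, ${}_r\mathcal{C}$ to ${}_l\Gamma$, ${}_r\Gamma$ goes beyond what the paper records (indeed, in the proof of Theorem \ref{thm1} the corollary is invoked precisely in the ${}_l\mathcal{C}$/${}_r\mathcal{C}$ form), so it is harmless.
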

\section{Proof of Theorem \ref{thm1}}
\label{pfth1}

Let $A$ be an algebra and $\Gamma$ be a cycle-finite infinite component of $_c\Gamma_A$. Consider the component $\mathcal C$ of $\Gamma_A$
containing the translation quiver $\Gamma$. Since $\Gamma$ is infinite and cyclic, we conclude from Corollary \ref{cor28}
that ${}_l\mathcal{C}$ or ${}_r\mathcal{C}$ contains a connected component $\Sigma$ containing an oriented cycle and infinitely many modules of $\Gamma$.
We claim that there exists a cyclic coherent full translation subquiver $\Omega$ of $\Gamma$ containing all modules of the cyclic part ${}_c\Sigma$
of $\Sigma$. We have three cases to consider:
\begin{enumerate}
\renewcommand{\labelenumi}{\rm(\arabic{enumi})}
\item Assume $\Sigma$ is contained in the stable part $_s\mathcal C = {_l\mathcal C} \cap {_r\mathcal C}$ of $\mathcal C$. Then $\Sigma$ is an
infinite stable translation quiver containing an oriented cycle, and hence $\Sigma$ is a stable tube, by the main result of \cite{Zh}. Clearly, the
stable tube $\Sigma$ is a cyclic and coherent translation quiver. Since $\Sigma$ is a component of $_l\mathcal C$ and a component of $_r\mathcal C$,
we conclude that $\Gamma$ contains a cyclic coherent full translation subquiver $\Omega$ such that $\Sigma$ is obtained from $\Omega$ by removing all
finite $\tau_A$-orbits without $\tau_A$-periodic modules.
\item Assume $\Sigma$ is a component of $_l\mathcal C$ containing at least one injective module. Then it follows from \cite[Lemma 2.2 and Theorem 2.3]{Li2} that
$\Sigma$ contains an infinite sectional path
$$\cdots\to\tau_A^rX_s\to\cdots\to\tau_A^rX_2\to\tau_A^rX_1\to X_s\to\cdots\to X_2\to X_1,$$
where $r>s\geq 1$, $X_i$ is an injective module for some $i\in\{1, \ldots, s\}$, and each module in $\Sigma$ belongs to the $\tau_A$-orbit of one
of the modules $X_i$.
Observe that there exists an infinite sectional path in $\Sigma$
$$X_s\to \tau_A^{r-1}X_1\to\cdots$$
starting from $X_s$. Let $p$ be the minimal element in $\{1, \ldots, s\}$ such that there exists an infinite sectional path in $\Sigma$ starting from $X_p$.
Then $\Gamma$ contains a cyclic coherent full translation subquiver $\Omega$ such that $\Sigma$ is obtained from $\Omega$ by removing the $\tau_A$-orbits
of projective modules $P$ lying on infinite sectional paths in $\Sigma$ of the forms
$$P\to\cdots\to X_j\to\cdots\to \tau_A^{r-j+p-1}X_1\to\cdots$$
for some $j\in\{p, \ldots, s\}$, or
$$P\to\cdots\to \tau_A^{mr}X_i\to \tau_A^{mr-1}X_{i+1}\to\cdots$$
for some $m\geq 1$ and $i\in\{1, \ldots, s\}$.
\item Assume $\Sigma$ is a component of $_r\mathcal C$ containing at least one projective module. Then it follows from \cite[duals of Lemma 2.2 and Theorem 2.3]{Li2} that
$\Sigma$ contains an infinite sectional path
$$X_1\to X_2\to\cdots\to X_t\to\tau_A^{-m}X_1\to\tau_A^{-m}X_2\to\cdots\to\tau_A^{-m}X_t\to\cdots,$$
where $m>t\geq 1$, $X_j$ is an projective module for some $j\in\{1, \ldots, t\}$, and each module in $\Sigma$ belongs to the $\tau_A$-orbit of one
of the modules $X_j$.
Observe that there exists an infinite sectional path in $\Sigma$
$$\cdots\to \tau_A^{-m+1}X_1\to X_t$$
ending in $X_t$. Let $q$ be the minimal element in $\{1, \ldots, t\}$ such that there exists an infinite sectional path in $\Sigma$ ending in $X_q$.
Then $\Gamma$ contains a cyclic coherent full translation subquiver $\Omega$ such that $\Sigma$ is obtained from $\Omega$ by removing the $\tau_A$-orbits
of injective modules $I$ lying on infinite sectional paths in $\Sigma$ of the forms
$$\cdots\to\tau_A^{-m+t-i+1}X_1\to\cdots\to X_i\to\cdots\to I$$
for some $i\in\{q, \ldots, t\}$, or
$$\cdots\to\tau_A^{-ms+1}X_{j+1}\to \tau_A^{-ms}X_{j}\to\cdots\to I$$
for some $s\geq 1$ and $j\in\{1, \ldots, t\}$.
\end{enumerate}
Let $\Gamma_1, \ldots, \Gamma_t$ be all maximal cyclic coherent pairwise different full translation subquivers of $\Gamma$. Clearly,
$\Gamma_1, \ldots, \Gamma_t$ are pairwise disjoint. For each $i\in\{1, \ldots, t\}$, consider the support algebra $B^{(i)}=\su(\Gamma_i)$ of $\Gamma_i$.

Fix $i\in\{1, \ldots, t\}$. We shall prove that $B^{(i)}$ is a generalized multicoil algebra and $\Gamma_i$ is the cyclic part of a generalized
multicoil $\Gamma_i^*$ of $\Gamma_{B^{(i)}}$, and consequently $\Gamma_i$ is a cyclic generalized multicoil full translation subquiver of $\Gamma_{B^{(i)}}$.
Since $\Gamma_i$ is a cyclic coherent full translation subquiver of the component $\mathcal C$ of $\Gamma_A$ and of $\Gamma$, it follows from the proofs
of Theorems A and F in \cite{MS1} that $\Gamma_i$, considered as a translation quiver, is a generalized multicoil, and consequently can be obtained
from a finite family ${\mathcal T}_1^{(i)}, \ldots, {\mathcal T}_{p_i}^{(i)}$ of stable tubes by an iterated application of admissible operations
of types (ad~1)-(ad~5) and their duals (ad~1$^*$)-(ad~5$^*$). We note that all vertices of the stable tubes
${\mathcal T}_1^{(i)}, \ldots, {\mathcal T}_{p_i}^{(i)}$ are indecomposable modules of $\Gamma$, and the stable tubes
${\mathcal T}_1^{(i)}, \ldots, {\mathcal T}_{p_i}^{(i)}$ can be obtained from $\Gamma$ by removing the modules of
$\Gamma\setminus({\mathcal T}_1^{(i)}\cup \ldots\cup {\mathcal T}_{p_i}^{(i)})$ and shrinking the corresponding sectional paths in $\Gamma$ with the
ends at the modules in ${\mathcal T}_1^{(i)}\cup \ldots\cup {\mathcal T}_{p_i}^{(i)}$ into the arrows. We claim now that $\Gamma_i$ is a generalized
standard full translation subquiver of $\Gamma_A$. Suppose that $\rad^{\infty}(X,Y)\neq 0$ for some indecomposable $A$-modules $X$ and $Y$ lying in
$\Gamma_i$. Then, applying Proposition \ref{prop21}, we conclude that there is in $\ind A$ an infinite cycle
$$X \buildrel {f}\over {\hbox to 6mm{\rightarrowfill}} Y \buildrel {f_1}\over {\hbox to 6mm{\rightarrowfill}} Z_1
\buildrel {f_2}\over {\hbox to 6mm{\rightarrowfill}} Z_2 \cdots \to Z_{t-1}
\buildrel {f_t}\over {\hbox to 6mm{\rightarrowfill}} Z_t=X$$
where $Z_1, \ldots, Z_t=X, Y$ are modules in $\Gamma_i$, $f_1, \ldots, f_t$ are irreducible homomorphisms and $0\neq f\in\rad^{\infty}(X,Y)$,
a contradiction with the cycle-finiteness of $\Gamma$.
Similarly, there is no path in $\ind B^{(i)}$ of the form
$$X \buildrel {g}\over {\hbox to 6mm{\rightarrowfill}} Z \buildrel {h}\over {\hbox to 6mm{\rightarrowfill}} Y$$
with $X$ and $Y$ in $\Gamma_i$ and $Z$ not in $\Gamma_i$ (external short path of $\Gamma_i$ in $\ind B$ in the sense of \cite{RS0}).
Since $\Gamma_i$ is a sincere cyclic coherent full translation subquiver of $\Gamma_{B^{(i)}}$,
applying \cite[Theorem A]{MS2} (and its proof), we conclude that ${B^{(i)}}$ is a generalized multicoil algebra,
$\Gamma_i$ is the cyclic part of a generalized multicoil $\Gamma_i^*$ of $\Gamma_{B^{(i)}}$, and $\ann_{B^{(i)}}(\Gamma_i)=\ann_{B^{(i)}}(\Gamma_i^*)=0$, and hence
$B^{(i)}=B(\Gamma_i)=B(\Gamma_i^*)$.
For each $j\in\{1, \ldots, p_i\}$, consider the quotient
algebra $C_j^{(i)} = A/\ann_A({\mathcal T}_{j}^{(i)})$ of $A$ by the annihilator $\ann_A({\mathcal T}_{j}^{(i)})$ of the family of indecomposable $A$-modules
forming ${\mathcal T}_{j}^{(i)}$. Then $C_j^{(i)}$ is a concealed canonical algebra and ${\mathcal T}_{j}^{(i)}$ is a stable tube of $\Gamma_{C_j^{(i)}}$.
We note that we may have $C_j^{(i)}=C_k^{(i)}$ for $j\neq k$ in $\{1, \ldots, p_i\}$. Then denoting by $C^{(i)}$ the product of pairwise different
algebras in the family $C_1^{(i)}, \ldots, C_{p_i}^{(i)}$, with respect to the annihilators $\ann_A({\mathcal T}_1^{(i)}), \ldots, \ann_A({\mathcal T}_{p_i}^{(i)})$
of ${\mathcal T}_1^{(i)}, \ldots, {\mathcal T}_{p_i}^{(i)}$, we obtain that $B^{(i)}$ is a generalized multicoil enlargement of $C^{(i)}$ involving the stable
tubes ${\mathcal T}_1^{(i)}, \ldots, {\mathcal T}_{p_i}^{(i)}$ and admissible operations of types (ad~1)-(ad~5) and (ad~1$^*$)-(ad~5$^*$) corresponding
to the translation quiver operations leading from the stable tubes ${\mathcal T}_1^{(i)}, \ldots, {\mathcal T}_{p_i}^{(i)}$ to the generalized multicoil
$\Gamma_i^*$. Further, by \cite[Theorem C]{MS2}, we have the following additional properties of $B^{(i)}$:
\begin{enumerate}
\renewcommand{\labelenumi}{\rm(\arabic{enumi})}
\item There is a unique factor algebra (not necessarily connected) $B_l^{(i)}$ of $B^{(i)}$ (the left part of $B^{(i)}$) obtained from $C^{(i)}$ by an
iteration of admissible operations of type (ad~1$^*$) and a family ${\widehat{\mathcal T}}_1^{(i)}, \ldots, {\widehat{\mathcal T}}_{p_i}^{(i)}$
of coray tubes in $\Gamma_{B_l^{(i)}}$, obtained from the stable tubes ${\mathcal T}_1^{(i)}, \ldots, {\mathcal T}_{p_i}^{(i)}$ by the corresponding
coray insertions, such that $B^{(i)}$ is obtained from $B_l^{(i)}$ by an iteration of admissible operations of types (ad~1)-(ad~5) and $\Gamma_i^*$
is obtained from the family ${\widehat{\mathcal T}}_1^{(i)}, \ldots, {\widehat{\mathcal T}}_{p_i}^{(i)}$ by an iteration of admissible operations
of types (ad~1)-(ad~5) corresponding to those leading from $B_l^{(i)}$ to $B^{(i)}$.
\item There is a unique factor algebra (not necessarily connected) $B_r^{(i)}$ of $B^{(i)}$ (the right part of $B^{(i)}$) obtained from $C^{(i)}$ by an
iteration of admissible operations of type (ad~1) and a family ${\widetilde{\mathcal T}}_1^{(i)}, \ldots, {\widetilde{\mathcal T}}_{p_i}^{(i)}$
of ray tubes in $\Gamma_{B_r^{(i)}}$, obtained from the stable tubes ${\mathcal T}_1^{(i)}, \ldots, {\mathcal T}_{p_i}^{(i)}$ by the corresponding
ray insertions, such that $B^{(i)}$ is obtained from $B_r^{(i)}$ by an iteration of admissible operations of types (ad~1$^*$)-(ad~5$^*$) and $\Gamma_i^*$
is obtained from the family ${\widetilde{\mathcal T}}_1^{(i)}, \ldots, {\widetilde{\mathcal T}}_{p_i}^{(i)}$ by an iteration of admissible operations
of types (ad~1$^*$)-(ad~5$^*$) corresponding to those leading from $B_r^{(i)}$ to $B^{(i)}$.
\end{enumerate}
As a consequence, the generalized multicoil $\Gamma_i^*$ of $\Gamma_{B^{(i)}}$ admits a left border $\Delta_l^{(i)}$ and a right border $\Delta_r^{(i)}$
having the following properties:
\begin{enumerate}
\renewcommand{\labelenumi}{\rm(\alph{enumi})}
\item $\Delta_l^{(i)}$ and $\Delta_r^{(i)}$ are disjoint and unions of finite sectional paths of $\Gamma_i$;
\item $\Gamma_i$ is the full translation subquiver of $\Gamma_i^*$ consisting of all modules which are both successors of modules lying in
$\Delta_l^{(i)}$ and predecessors of modules lying in $\Delta_r^{(i)}$;
\item $\Gamma_i^*\setminus\Gamma_i$ consists of a finite number of directing $B^{(i)}$-modules;
\item Every module in $\Gamma\setminus\Gamma_i$ which is a predecessor of a module in $\Gamma_i$ is a predecessor of a module in $\Delta_l^{(i)}$;
\item Every module in $\Gamma\setminus\Gamma_i$ which is a successor of a module in $\Gamma_i$ is a successor of a module in $\Delta_r^{(i)}$;
\item $B(\Delta_l^{(i)})=\su(\Delta_l^{(i)})$ and is a product of tilted algebras of equioriented Dynkin types $\Bbb{A}_n$ and $\Delta_l^{(i)}$ is the union of sections of the
connecting components of the indecomposable parts of $B(\Delta_l^{(i)})$;
\item $B(\Delta_r^{(i)})=\su(\Delta_r^{(i)})$ and is a product of tilted algebras of equioriented Dynkin types $\Bbb{A}_n$ and $\Delta_r^{(i)}$ is the union of sections of the
connecting components of the indecomposable parts of $B(\Delta_r^{(i)})$.
\end{enumerate}
We denote by $\Gamma^{cc}$ the union of the translation subquivers $\Gamma_1, \ldots, \Gamma_t$. We claim that $\Gamma\setminus\Gamma^{cc}$ consists
of finitely modules and $\Gamma^{cc}$ is a maximal cyclic coherent full translation subquiver of $\Gamma$. Suppose that infinitely many modules of
$\Gamma$ are not contained in $\Gamma^{cc}$. We have the following properties of modules in $\Gamma\setminus\Gamma^{cc}$. Since $\Gamma$ is a connected
component of $_c\Gamma_A$, by Proposition \ref{prop21}, for any modules $M$ in $\Gamma\setminus\Gamma^{cc}$ and $N$ in $\Gamma^{cc}$, there is an
oriented cycle in $\Gamma$ passing through $M$ and $N$. Moreover, if $N$ belongs to $\Gamma_i$, then every such a cycle is of the form
$$M\to\cdots\to X\to\cdots\to N\to\cdots\to Y\to\cdots\to M$$
with $X$ in $\Delta_l^{(i)}$ and $Y$ in $\Delta_r^{(i)}$.
Applying Proposition \ref{prop23} to the infinite family $\Sigma=\Gamma\setminus\Gamma^{cc}$ of cycle-finite modules, we obtain that
the left stable part $_l{\mathcal C}$ or the right stable part $_r{\mathcal C}$ of $\mathcal C$ admits an infinite component $\Sigma'$ containing
an oriented cycle and infinitely many modules from $\Gamma\setminus\Gamma^{cc}$. Then, as in the first part of the proof, we infer that there exists
a cyclic coherent full translation subquiver $\Omega'$ of $\Gamma$ containing all modules of $\Sigma'$. Obviously, $\Omega'$ is disjoint with
$\Gamma_1, \ldots, \Gamma_t$, and this contradicts to our choice of $\Gamma_1, \ldots, \Gamma_t$. Therefore, indeed, $\Gamma\setminus\Gamma^{cc}$
consists of finitely many modules.

Our next aim is to show that the algebra $B(\Gamma\setminus\Gamma^{cc})=A/\ann_A(\Gamma\setminus\Gamma^{cc})$ is of finite representation type. We abbreviate
$D=B(\Gamma\setminus\Gamma^{cc})$.
Observe that, if every indecomposable module from $\mo D$ lies in $\Gamma\setminus\Gamma^{cc}$, then $D$ is of finite representation type. Therefore,
assume that $\mo D$ admits an indecomposable module $Z$ which is not in $\Gamma\setminus\Gamma^{cc}$. Let $M$ be the direct sum of all indecomposable
$A$-modules lying in $\Gamma\setminus\Gamma^{cc}$. Moreover, let $D=P'\oplus P''$ be a decomposition of $D$ in $\mo D$, where $P'$ is the direct sum
of all indecomposable projective $D$-modules lying in $\Gamma\setminus\Gamma^{cc}$ and $P''$ is the direct sum of the remaining indecomposable
projective $D$-modules. Observe that $M$ is a faithful module in $\mo D$ and hence we have a monomorphism of right $D$-modules $P''\to M^t$,
which then factors through a direct sum of modules
lying on the sum $\Delta_r^{(1)}\cup\ldots\cup\Delta_r^{(t)}$ of the right parts $\Delta_r^{(1)}, \ldots, \Delta_r^{(t)}$ of $\Gamma_1, \ldots, \Gamma_t$,
and consequently $P''$ is a module over the algebra $B(\Delta_r^{(1)})\times\ldots\times B(\Delta_r^{(t)})$.
Consider also a projective cover $\pi: P_D(Z)\to Z$ of $Z$ in $\mo D$. Let $P_D(Z) = P'_D(Z)\oplus P''_D(Z)$, where
$P'_D(Z)$ is a direct sum of direct summands of $P'$ and $P''_D(Z)$ is a direct sum of direct summands of $P''$, and denote by $\pi': P'_D(Z)\to Z$
and $\pi'': P''_D(Z)\to Z$ the restrictions of $\pi$ to $P'_D(Z)$ and $P''_D(Z)$, respectively.
Then $\pi': P'_D(Z)\to Z$ factors through a direct sum of modules lying on the sum $\Delta_l^{(1)}\cup\ldots\cup\Delta_l^{(t)}$
of the left parts $\Delta_l^{(1)}, \ldots, \Delta_l^{(t)}$ of $\Gamma_1, \ldots, \Gamma_t$, because $Z$ does not belong to $\Gamma\setminus\Gamma^{cc}$.
In particular, we obtain that $\pi'(P'_D(Z))$ is a module  over the algebra $B(\Delta_l^{(1)})\times\ldots\times B(\Delta_l^{(t)})$. Summing up, we
conclude that $Z=\pi'(P'_D(Z))+\pi''(P''_D(Z))$ is a module over the quotient algebra
$$\Lambda = B(\Delta_l^{(1)})\times\ldots\times B(\Delta_l^{(t)})\times B(\Delta_r^{(1)})\times\ldots\times B(\Delta_r^{(t)}),$$
of $A$, which is an algebra of finite representation type as a product of tilted algebras of Dynkin types $\Bbb{A}_n$.
Therefore, we obtain that every module from $\ind D$ which is not in $\Gamma\setminus\Gamma^{cc}$ is an indecomposable module in $\ind\Lambda$.
Since $\Gamma\setminus\Gamma^{cc}$ is finite, we conclude that $D$ is of finite representation type.

Finally, let $B=\su(\Gamma)=A/t_A(\Gamma)$. Then $\Gamma$ is a sincere cycle-finite component of $_c\Gamma_B$ and $\ann_B(\Gamma)=\ann_A(\Gamma)/t_A(\Gamma)$.
Hence, in order to show that $\su(\Gamma)=B(\Gamma)$, it is enough to prove that $\Gamma$ is a faithful translation subquiver of $\Gamma_B$. Let $B=P\oplus Q$
be a decomposition in $\mo B$ such that $Q$ is the direct sum of all indecomposable projective modules lying in $\Gamma$ and $P$ the direct sum of the remaining
indecomposable projective right $B$-modules. Then $P$ is a direct sum of indecomposable projective modules over the product
$B^{(1)}\times\ldots\times B^{(t)}$ of generalized multicoil algebras $B^{(1)}, \ldots, B^{(t)}$. Since $B^{(i)}=\su(\Gamma_i)=B(\Gamma_i)$ for any $i\in\{1, \ldots, t\}$,
we conclude that there is a monomorphism $P\to N^m$ for a module $N$ in $\mo B$ being a direct sum of indecomposable modules lying in
$\Gamma^{cc}=\Gamma_1\cup\ldots\cup\Gamma_t$ and a positive integer $m$. Clearly, then there is a monomorphism in $\mo B$ of the form
$B=P\oplus Q \to (N\oplus Q)^m$, and consequently $\Gamma$ is a faithful component of $\Gamma_B$. Therefore, we obtain the equality $\su(\Gamma)=B(\Gamma)$.
\section{Proof of Theorem \ref{thm2}}
\label{pfth2}

Let $A$ be an algebra and $\Gamma$ be a cycle-finite finite component of $_c\Gamma_A$. Moreover, let $B=A/t_A(\Gamma)$ be the support algebra of $\Gamma$.
Observe that $\Gamma$ is a sincere cycle-finite component of $_c\Gamma_B$. We will show that $B$ is a generalized double tilted algebra, applying \cite[Theorem]{Sk11-5}.
Since $\Gamma$ is a finite component of $_c\Gamma_B$, it follows from Corollary \ref{cor25} that $\Gamma$ contains a projective module and an injective module.
Hence, applying Proposition \ref{prop21}, we conclude that there exists in $\Gamma$ a path from an injective module to a projective module. Let
\[ I=X_0 \buildrel {f_1}\over {\hbox to 10mm{\rightarrowfill}} X_1 \buildrel {f_2}\over {\hbox to 10mm{\rightarrowfill}} \cdots
\buildrel {f_{m-1}}\over {\hbox to 10mm{\rightarrowfill}} X_{m-1} \buildrel {f_m}\over {\hbox to 10mm{\rightarrowfill}} X_m=P \]
be an arbitrary path in $\ind B$ from an indecomposable injective module $I$ to an indecomposable projective module $P$. Since $\Gamma$ is a sincere translation
subquiver of $\Gamma_B$, there exist indecomposable modules $M$ and $N$ in $\Gamma$ such that $\Hom_B(P,M)\neq 0$ and $\Hom_B(N,I)\neq 0$. Further, it follows
from Proposition \ref{prop21} that there exists a path in $\ind B$ from $M$ to $N$. Therefore, we obtain in $\ind B$ a cycle of the form
\[ M\to\cdots\to N\to X_0 \buildrel {f_1}\over {\hbox to 9mm{\rightarrowfill}} X_1 \buildrel {f_2}\over {\hbox to 9mm{\rightarrowfill}} \cdots
\buildrel {f_{m-1}}\over {\hbox to 10mm{\rightarrowfill}} X_{m-1} \buildrel {f_m}\over {\hbox to 9mm{\rightarrowfill}} X_m\to M, \]
and this is a finite cycle, because $M$ and $N$ belong to the cycle-finite component $\Gamma$ of $_c\Gamma_B$. This shows that all the modules
$X_0, X_1, \ldots, X_{m-1}, X_m$
belong to the finite translation quiver $\Gamma$ of $\Gamma_B$. Then it follows from \cite[Theorem]{Sk11-5} that $B$ is a quasitilted algebra or a generalized
double tilted algebra. Furthermore, by \cite[Corollary (E)]{CS}, the Auslander-Reiten quiver of a quasitilted algebra which is not a tilted algebra consists
of semiregular components. Clearly, every tilted algebra is a generalized double tilted algebra \cite{RS2}. Since the cyclic component $\Gamma$ of $\Gamma_B$
contains a path from an injective module to a projective module, we obtain that $B$ is a generalized double tilted algebra. Hence, it follows from
\cite[Section 3]{RS2} that $\Gamma_B$ admits an almost acyclic component $\mathcal C$ with a faithful multisection $\Delta$. Recall that, following \cite[Section 2]{RS2},
a full connected subquiver $\Delta$ of $\mathcal C$ is called a {\it multisection} if the following conditions are satisfied:
\begin{enumerate}
\renewcommand{\labelenumi}{\rm(\roman{enumi})}
\item $\Delta$ is almost acyclic.
\item $\Delta$ is convex in $\mathcal C$.
\item For each $\tau_B$-orbit $\mathcal O$ in $\mathcal C$, we have $1 \leq | \Delta \cap {\mathcal O} | < \infty$.
\item $| \Delta \cap {\mathcal O} | = 1$ for all but finitely many $\tau_B$-orbits $\mathcal O$ in $\mathcal C$.
\item No proper full convex subquiver of $\Delta$ satisfies \rm{(i)}--\rm{(iv)}.
\end{enumerate}

Moreover, for a multisection $\Delta$ of a component $\mathcal C$, the following full subquivers of $\mathcal C$ were defined in \cite{RS2}:
$$\! \Delta^{\prime}_{l} = \{ X\! \in \!\Delta; \text{there is a nonsectional path in $\mathcal C$ from $X$ to a projective module $P$}\},$$
$$\! \Delta^{\prime}_{r} = \{ X\! \in \!\Delta; \text{there is a nonsectional path in $\mathcal C$ from an injective module $I$ to $X$}\},$$
$$
  \Delta^{\prime\prime}_{l} =
  \{ X \in \Delta^{\prime}_{l};
     \tau_A^{-1} X \notin \Delta^{\prime}_{l}
  \} , \qquad
  \Delta^{\prime\prime}_{r} =
  \{ X \in \Delta^{\prime}_{r};
     \tau_A X \notin \Delta^{\prime}_{r}
  \} ,
$$
$$
  \Delta_l =
  (\Delta \setminus \Delta^{\prime}_r)
       \cup \tau_A \Delta^{\prime\prime}_r
    , \quad
  \Delta_c = \Delta^{\prime}_l \cap \Delta^{\prime}_r
    , \quad
  \Delta_r = (\Delta \setminus \Delta^{\prime}_l)
        \cup \tau_A^{-1} \Delta^{\prime\prime}_l
  .
$$

Then $\Delta_l$ is called the {\it left part} of $\Delta$, $\Delta_r$ the {\it right part} of $\Delta$, and $\Delta_c$ the {\it core} of $\Delta$.

The following basic properties of $\Delta$ have been established in \cite[Proposition 2.4]{RS2}:
\begin{enumerate}
\renewcommand{\labelenumi}{\rm(\alph{enumi})}
\item Every cycle of $\mathcal C$ lies in $\Delta_c$.
\item $\Delta_c$ is finite.
\item Every indecomposable module $X$ in $\mathcal C$ is in $\Delta_c$, or a predecessor of $\Delta_l$ or a successor of $\Delta_r$ in $\mathcal C$.
\end{enumerate}
It follows also from \cite[Theorem 3.4, Corollary 3.5]{RS2} and the known structure of the Auslander-Reiten quivers of tilted algebras
(see \cite{HR}, \cite{Ke}, \cite{Ri1}, \cite{SS2}) that every component of $\Gamma_B$ different from $\mathcal C$ is a semiregular component.
Hence the cyclic component $\Gamma$ is a translation subquiver of $\mathcal C$, and consequently is contained in the core $\Delta_c$ of $\Delta$.
We also know from \cite[Proposition 2.11]{RS2} that, for another multisection $\Sigma$ of $\mathcal C$, we have $\Sigma_c=\Delta_c$. Thus $\Delta_c$
is a uniquely defined core $c(\mathcal C)$ of the connecting component $\mathcal C$ of $\Gamma_B$. We claim that $\Gamma=c({\mathcal C})$.
Let $X$ be a module in $\Delta_c=\Delta'_l\cap\Delta'_r$. Then there are nonsectional paths in $\mathcal C$ from $X$ to an indecomposable projective module
$P$ and from an indecomposable injective module $I$ to $X$. Moreover, there exist indecomposable modules $Y$ and $Z$ in $\Gamma$ such that
$\Hom_B(P,Y)\neq 0$ and $\Hom_B(Z,I)\neq 0$, because $\Gamma$ is a sincere translation subquiver of $\Gamma_B$. Further, by Proposition \ref{prop21},
we have in $\Gamma$ a path from $Y$ to $Z$. Hence we obtain in $\ind B$ a cycle of the form
\[ X\to\cdots\to P\to Y\to\cdots\to Z\to I\to\cdots\to X, \]
which is a finite cycle because $Y$ and $Z$ belong to the cycle-finite component $\Gamma$ of $_c\Gamma_B$. Therefore, there is in $\mathcal C$ a cycle passing through
the modules $X$, $Y$ and $Z$, and so $X$ belongs to $\Gamma$. This shows that $\Gamma=\Delta_c=c({\mathcal C})$.

Let $B^{(l)}=\su(\Delta_l)$ be the support algebra of the left part $\Delta_l$ of $\Delta$ (if $\Delta_l$ is nonempty) and $B^{(r)}=\su(\Delta_r)$
be the support algebra of the right part $\Delta_r$ of $\Delta$ (if $\Delta_r$ is nonempty). Then the following description of $\ind B$ follows from the results
established in \cite[Section 3]{RS2}:
\begin{enumerate}
\renewcommand{\labelenumi}{\rm(\arabic{enumi})}
\item $B^{(l)}$ is a tilted algebra (not necessarily indecomposable) such that $\Delta_l$ is a disjoint union of sections
of the connecting components of the indecomposable parts of $B^{(l)}$ and the category of all predecessors of $\Delta_l$ in $\ind B$ coincides with
the category of all predecessors of $\Delta_l$ in $\ind B^{(l)}$, or $B^{(l)}$ is empty in case $\Delta_l$ is empty.
\item $B^{(r)}$ is a tilted algebra (not necessarily indecomposable) such that $\Delta_r$ is a disjoint union of sections
of the connecting components of the indecomposable parts of $B^{(r)}$ and the category of all successors of $\Delta_r$ in $\ind B$ coincides with
the category of all successors of $\Delta_r$ in $\ind B^{(r)}$, or $B^{(r)}$ is empty in case $\Delta_r$ is empty.
\item Every indecomposable module in $\ind B$ is either in $\Gamma=c({\mathcal C})$, a predecessor of $\Delta_l$ in $\ind B$, or a successor of $\Delta_r$ in $\ind B$.
\item If $\Delta_l$ is nonempty, then $\Delta_l$ is a faithful subquiver of $\Gamma_{B^{(l)}}$, and hence $B^{(l)}$ is the faithful algebra
$B(\Delta_l)=A/\ann_A(\Delta_l)$ of $\Delta_l$.
\item If $\Delta_r$ is nonempty, then $\Delta_r$ is a faithful subquiver of $\Gamma_{B^{(r)}}$, and hence $B^{(r)}$ is the faithful algebra
$B(\Delta_r)=A/\ann_A(\Delta_r)$ of $\Delta_r$.
\end{enumerate}

We will prove now that $B$ coincides with the faithful algebra $B(\Gamma)=A/\ann_A(\Gamma)$ of $\Gamma$. Observe that $t_A(\Gamma)\subseteq\ann_A(\Gamma)$
and $\ann_B(\Gamma)=\ann_A(\Gamma)/t_A(\Gamma)$. Therefore, it is sufficient to show that $\Gamma$ is a faithful subquiver of $\Gamma_B$. Let $M_{\Gamma}$
be the direct sum of all indecomposable $B$-modules lying in $\Gamma$. Then $M_{\Gamma}$ is a sincere module in $\mo B$, by definition of $B$. In order
to show that $M_{\Gamma}$ is a faithful $B$-module, it is enough to prove that there is a monomorphism $B\to (M_{\Gamma})^n$ for some positive integer
$n$. Let $B=P^{(l)}\oplus P^{(c)}$ be a decomposition of $B$ in $\mo B$ such that the indecomposable direct summands of $P^{(c)}$ are exactly the
indecomposable projective $B$-modules lying in the core $\Gamma=c({\mathcal C})$ of $\mathcal C$. Clearly, $P^{(c)}$ is a direct summands of $M_{\Gamma}$,
and hence there is a monomorphism $P^{(c)}\to M_{\Gamma}$ in $\mo B$. On the other hand, the indecomposable direct summands of $P^{(l)}$ form a complete family
of pairwise nonisomorphic indecomposable projective right modules over the left tilted algebra $B^{(l)}$ of $B$. Hence, if $P^{(l)}=0$, or equivalently the left
part $\Delta_l$ of $\Delta$ is empty, then $B=P^{(l)}$ and $M_{\Gamma}$ is a faithful module in $\mo B$, as required. Therefore, assume that $\Delta_l$ is nonempty.

Let $\Gamma^{(l)}$ be the family of all indecomposable modules $X$ in $\Gamma$ such that there is an arrow $Y\to X$ in $\mathcal C$ with $Y$ from $\Delta_l$.
We claim that, for any module $X$ in $\Gamma^{(l)}$, there exists an indecomposable projective module $P$ in $\Gamma$ such that $\Hom_B(P,X)\neq 0$. We may
assume that $X$ is not projective. Then $\tau_BX$ is an indecomposable module not lying in $\Gamma$, because we have a path $\tau_BX\to Y\to X$ in $\mathcal C$,
with $X$ in the cyclic component $\Gamma$ of $\Gamma_B$ and $Y$ not in $\Gamma$. Observe that then $\tau_BX\in\Delta_l$ because $X$ is in
$\Gamma=\Delta_c=\Delta'_l\cap\Delta'_r$. Consider now an oriented cycle in $\Gamma$
$$X=X_0\to X_1\to \cdots\to X_{r-1}\to X_r=X$$
passing through $X$. It follows from Proposition \ref{prop23n} that there exists $i\in \{2, \ldots, r\}$ such that $\tau_BX_i\cong X_{i-2}$. Since $\tau_BX$
does not belong to $\Gamma$, we then conclude that there is in $\Gamma$ a sectional path
$$X_s\to X_{s+1}\to \cdots\to X_{r-1}\to X_r=X$$
with $X_s=P$ an indecomposable projective module. Hence we obtain that $\Hom_B(P,X)\neq 0$, because the composition of irreducible homomorphisms in $\mo B$
corresponding to arrows of a sectional path in $\Gamma_B$ is nonzero, by a theorem of Bautista and Smal{\o} \cite{BaSm} (see also \cite[Theorem III.11.2]{SY}).
Observe also that, for any module $Y$ lying on $\Delta_l$, we have $\Hom_B(P^{(c)},Y) = 0$, because $Y$ is a module in $\mo B^{(l)}$. This leads to the following
property of modules in $\Gamma^{(l)}$: any irreducible homomorphism $f: Y\to X$ with $X$ in $\Gamma^{(l)}$ and $Y$ in $\Delta_l$ is a monomorphism.

Consider now the family $\Omega^{(l)}$ of all indecomposable modules $Y$ in $\Delta_l$ such that there is an arrow in $\mathcal C$ from $Y$ to a module $X$
in $\Gamma$, and hence in $\Gamma^{(l)}$. Moreover, let $M^{(l)}$ be the direct sum of all indecomposable modules in $\Omega^{(l)}$. Observe that $M^{(l)}$
is a right $B^{(l)}$-module. Moreover, for any module $Y$ in $\Omega^{(l)}$, there is an irreducible monomorphism $Y\to X$ in $\mo B$ with $X$
lying in $\Gamma^{(l)}$. This implies that there is a monomorphism in $\mo B$ of the form $M^{(l)}\to (M_{\Gamma})^m$ for some positive integer $m$.
We will prove that $M^{(l)}$ is a faithful right $B^{(l)}$-module.

Let $P$ be an indecomposable projective module in $\mo B^{(l)}$, or equivalently, an indecomposable direct summand of $P^{(l)}$. Since $M_{\Gamma}$
is a sincere module in $\mo B$, we conclude that there is an indecomposable module $Z$ in $\Gamma$ such that $\Hom_B(P,Z)\neq 0$.
Further, the radical $\rad\End_B(M_{\Gamma})$ of the endomorphism algebra $\End_B(M_{\Gamma})$ in nilpotent. Then there exist a path of irreducible
homomorphisms
\[ Z_{t+1} \buildrel {g_{t+1}}\over {\hbox to 8mm{\rightarrowfill}} Z_t \buildrel {g_{t}}\over {\hbox to 8mm{\rightarrowfill}} Z_{t-1}\to \cdots \to
Z_{2} \buildrel {g_2}\over {\hbox to 8mm{\rightarrowfill}} Z_1 \buildrel {g_{1}}\over {\hbox to 8mm{\rightarrowfill}} Z_0=Z \]
and a homomorphism $v_{t+1}: P\to Z_{t+1}$ in $\mo B$ with $g_1g_2\ldots g_tg_{t+1}v_{t+1}\neq 0$, $Z_0, Z_1, \ldots, Z_t$ indecomposable modules
in $\Gamma$ and $Z_{t+1}$ an indecomposable module in $\Delta_l$ (see \cite[Proposition III.10.1]{SY}). This implies that $\Hom_B(P,M^{(l)})$ $\neq 0$
because $Z_{t+1}$ is a direct summand of $M^{(l)}$. Therefore, $M^{(l)}$ is a sincere right $B^{(l)}$-module. We know also that $B^{(l)}$ is a tilted
algebra and $\Delta_l$ is a disjoint union of sections of the connecting components of the indecomposable parts of $B^{(l)}$ and the category of all
predecessors of $\Delta_l$ in $\ind B$ coincides with the category of all predecessors of $\Delta_l$ in $\ind B^{(l)}$. Then we conclude that,
for any indecomposable module $L$ in $\ind B^{(l)}$, we have
\[ \Hom_{B^{(l)}}(L,M^{(l)})=0 \,\,\,{\rm or}\,\,\, \Hom_{B^{(l)}}(M^{(l)},\tau_{B^{(l)}}L)=0. \]
Summing up, we proved that $M^{(l)}$ is a sincere module in $\mo B^{(l)}$ which is not the middle of a short chain in the sense of \cite{RSS} (see also \cite{AR2}).
Then it follows from \cite[Corollary 3.2]{RSS} that $M^{(l)}$ is a faithful module in $\mo B^{(l)}$. Hence, there exists a monomorphism $B^{(l)}\to (M^{(l)})^s$
in $\mo B^{(l)}$ for some positive integer $s$.

Finally, since there exist monomorphisms $M^{(l)}\to (M_{\Gamma})^m$ and $P^{(l)}\to (M_{\Gamma})$, and $B=P^{(l)}\oplus P^{(c)}$ with $P^{(l)}=B^{(l)}$
in $\mo B$, we obtain that there is a monomorphism in $\mo B$ of the form $B\to (M_{\Gamma})^n$ for some positive integer $n$. Therefore, $M_{\Gamma}$
is a faithful module in $\mo B$, and consequently $B=B(\Gamma)$. This finishes the proof of the theorem.

In connection with the final part of the above proof, we mention that, by a recent result proved by Jaworska, Malicki and Skowro\'nski in \cite{JMS1},
an algebra $A$ is a tilted algebra if and only if there exists a sincere module $M$ in $\mo A$ such that for any module $X$ in $\ind A$, we have
$\Hom_A(X,M)=0$ or $\Hom_A(M,\tau_AX)=0$. Moreover, all modules $M$ in a module category $\mo A$ not being the middle of short chains have been
described completely in \cite{JMS2}.
\section{Proof of Theorem \ref{thm10}}
\label{pfthm10}

Let $A$ be a cycle-finite algebra. Clearly, the statement (iv) implies the statements (i), (ii), (iii). Assume $A$ is of infinite representation type.
Then it follows from \cite[Corollary 4.3]{Sk7-5} that there is an idempotent $e$ in $A$ such that $B=A/AeA$ is a tame concealed algebra. Then $\ind B$
admits infinitely many pairwise nonisomorphic directing postprojective (respectively, preinjective) modules. Hence, $\ind B$ has infinitely many pairwise
nonisomorphic rigid modules, and consequently $\ind A$ has infinitely many pairwise nonisomorphic rigid modules. Applying Theorem \ref{thm5} we conclude
that $\ind A$ contains infinitely many pairwise nonisomorphic directing modules, because $A$ is a cycle-finite algebra.
Then $\Gamma_A$ contains a $\tau_A$-orbit containing infinitely many
directing modules, by \cite[Theorem 2.7]{PX} or \cite[Corollary 2]{Sk4}. Hence $\Gamma_A$ contains either an acyclic left stable full
translation subquiver $\mathcal D$ which is closed under predecessors or an acyclic right stable full translation subquiver $\mathcal E$
which is closed under successors. Since $A$ is a cycle-finite algebra, applying \cite[Theorem 2.2]{MPS2} and its dual, we conclude that
$\mathcal D$ (respectively, $\mathcal E$) consists entirely of directing modules, which are obviously also $\tau_A$-rigid and rigid
indecomposable modules. Therefore, any of the statements (i), (ii), (iii) implies the statement (iv).
\section{Examples: infinite cyclic components}
\label{exs-inf}

In this section we present examples illustrating Theorem \ref{thm1}. 
\begin{ex}
Let $K$ be a field and $A=KQ/I$ the bound quiver algebra given by the quiver $Q$ of the form
$$\xymatrix@C=16pt@R=14pt{
&&&9\ar[rd]^{\eta}&&16\ar[ld]_{\psi}\ar[rd]_{l}&&19\ar[rd]^{i}\ar[ll]_{j}\\
&&10\ar[ru]^{\xi}\ar[rd]_{\mu}\ar[dd]_{\pi}&&7\ar[dddd]^{\rho}&&17\ar[d]_{m}&&20\ar[d]^{h}\\
&&&8\ar[ru]_{\nu}&&&18&&21\ar[dd]^{g}\ar[rd]^{f}\\
0\ar[r]^{\theta}&1&2\ar[l]_{\omega}\ar[dd]_{\kappa}&&&&&&&22\ar[ld]^{e}\\
&&&5\ar[ld]_{\beta}&&&&&15\ar[d]^{d}\\
&&3&&6\ar[ld]^{\sigma}\ar[lu]_{\alpha}\ar[rd]_{\varphi}&&12\ar[ld]^{a}\ar[rd]_{b}&&14\ar[ld]^{c}\\
&&&4\ar[lu]^{\gamma}&&11\ar[rd]^{v}&&13\\
&&&&&&23&24\ar[l]_{t}&25\ar[l]_{u}\\
&&&&&&&26\ar[u]_{s}\ar@(r,d)^{r}\\
}$$
and $I$ the ideal in the path algebra $KQ$ of $Q$ over $K$ generated by the elements $\alpha\beta - \sigma\gamma$, $\xi\eta - \mu\nu$,
$\pi\kappa - \xi\eta\rho\alpha\beta$, $\rho\varphi$, $\psi\rho$, $jl$, $dc$, $ed$, $gd$, $hg$, $hf$, $ih$, $av$, $rs$, $st$, $r^2$.
Then $A$ is a cycle-finite algebra and $\Gamma_A$ admits a component $\mathcal C$ of the form
%
$$\includegraphics[scale=0.6]{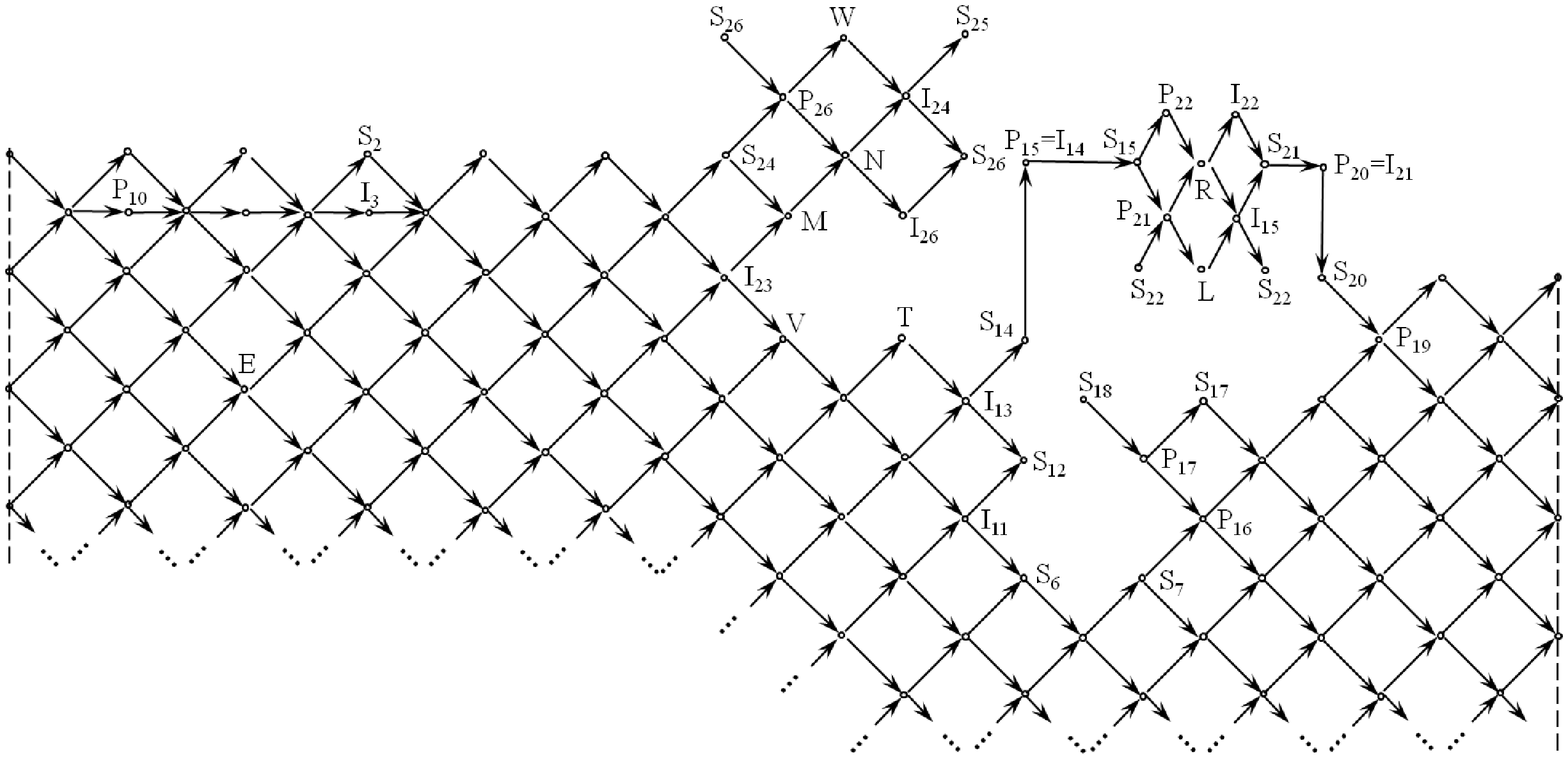}$$
\noindent The cyclic part ${}_c{\mathcal C}$ of $\mathcal C$ consists of one infinite component $\Gamma$ and one finite component $\Gamma'$ described
as follows.
The infinite cyclic component $\Gamma$ is obtained by removing from $\mathcal C$ the modules
$S_{12}, S_{17}, S_{18}, P_{17}$, $S_{24}$, $M$, $S_{26}$, $P_{26}$, $N$, $I_{26}$, $W$, $I_{24}$, $S_{25}$, and the arrows attached to them.
The finite cyclic component $\Gamma'$ is the full translation subquiver of $\mathcal C$ given by the vertices $S_{26}, P_{26}, N, I_{26}, W, I_{24}$.
The maximal cyclic coherent part $\Gamma^{cc}$ of $\Gamma$ is the full translation subquiver of $\mathcal C$ obtained by
removing from $\mathcal C$ the modules $S_{12}$, $I_{13}$, $T$, $S_{14}$, $P_{15}=I_{14}$, $S_{15}$, $P_{21}$, $S_{22}$, $L$, $P_{22}$, $R$, $I_{15}$, $I_{22}$,
$S_{21}$, $P_{20}=I_{21}$, $S_{20}$, $S_{17}$, $P_{17}$, $S_{18}$, $S_{24}$, $M$, $S_{26}$, $P_{26}$, $N$, $I_{26}$, $W$, $I_{24}$, $S_{25}$, and the arrows attached to them.
Further, $\Gamma^{cc}$ is the cyclic part of the maximal almost cyclic coherent full translation subquiver $\Gamma^*$ of $\mathcal C$ obtained by removing from $\mathcal C$
the modules $P_{15}=I_{14}$, $S_{15}$, $P_{21}$, $S_{22}$, $L$, $P_{22}$, $R$, $I_{15}$, $I_{22}$, $S_{21}$, $P_{20}=I_{21}$, $S_{26}$, $P_{26}$, $I_{26}$, $W$, $N$, $I_{24}$
and the arrows attached to them, and shrinking the sectional path $M\to N\to I_{24}\to S_{25}$ to the arrow $M\to S_{25}$.

Let $B=A/\ann_A(\Gamma)$. Then $B=A/\ann_A(\Gamma^*)$, because $\ann_A(\Gamma) = \ann_A(\Gamma^*)$. Observe that $B=KQ_B/I_B$, where $Q_B$ is the full subquiver
of $Q$ given by all vertices of $Q$ except 15, 21, 22, 26, and $I_B=I\cap KQ_B$. We claim that $B$ is a tame generalized multicoil algebra. Consider
the path algebra $C=K\Sigma$ of the full subquiver $\Sigma$ of $Q$ given by the vertices 4, 5, 6, 7, 8, 9. Then $C$ is a hereditary algebra of Euclidean type
$\widetilde{\Bbb D}_5$, and hence a tame concealed algebra. It is known that $\Gamma_C$ admits an infinite family $\mathcal{T}^{C}_{\lambda}$,
${\lambda\in\Lambda(C)}$, of pairwise orthogonal generalized standard stable tubes, having a unique stable tube, say $\mathcal{T}_{1}^C$, of rank
3 with the mouth formed by the modules $S_6=\tau_CS_7$, $S_7=\tau_CE$, $E=\tau_CS_6$, where $E$ is the unique indecomposable $C$-module with the
dimension vector
$\underline{\dim}E = {\begin{smallmatrix}1{\phantom{1}} \\ {\phantom{1}}1 \\ 1{\phantom{1}} \\ {\phantom{1}}{\phantom{1}} \\
1{\phantom{1}} \\ {\phantom{1}}1 \\ 1{\phantom{1}}
\end{smallmatrix}}$, (see \cite[Section 6]{DlRi} and \cite[Theorem XIII 2.9]{SS1}).

Then $B$ is the generalized multicoil enlargement of $C$, obtained by applications of the following admissible operations:
\begin{itemize}
\item two admissible operations of types (ad~1$^*$) with the pivots $S_6$ and $S_{12}$, creating the vertices 11, 12, 13, 14 and the arrows
$\varphi$, $a$, $b$, $c$;
\item two admissible operations of types (ad~1$^*$) with the pivots $E$ and $S_{2}$, creating the vertices 3, 2, 1, 0 and the arrows
$\beta$, $\gamma$, $\kappa$, $\omega$, $\theta$;
\item two admissible operations of types (ad~1) with the pivots $S_7$ and $S_{16}$, creating the vertices 16, 17, 18, 19, 20 and the arrows
$\psi$, $l$, $m$, $j$, $i$;
\item one admissible operation of type (ad~3) with the pivot the radical of $P_{10}$, creating the vertex 10 and the arrows $\xi$, $\mu$, $\pi$;
\item one admissible operation of type (ad~1$^*$) with the pivot $V$ being the unique indecomposable module of dimension $2$ having $S_{11}$ as the socle
and $S_6$ as the top, creating the vertices 23, 24, 25 and the arrows $v$, $t$, $u$.
\end{itemize}
Then the left part $B^{(l)}$ of $B$ is the convex subcategory of $B$ (and of $A$) given by the vertices 0, 1, 2, 3, 4, 5, 6, 7, 8, 9, 11, 12, 13, 14, 23, 24, 25,
and is a tilted algebra of Euclidean type $\widetilde{\Bbb D}_{16}$ with the connecting postprojective component ${\mathcal P}^{B^{(l)}}$ containing all
indecomposable projective $B^{(l)}$-modules.
The right part $B^{(r)}$ of $B$ is the convex subcategory of $B$ (and of $A$) given by the vertices 0, 1, 2, 4, 5, 6, 7, 8, 9, 10, 16, 17, 18, 19, 20,
and is a tilted algebra of Euclidean type $\widetilde{\Bbb D}_{14}$ with the connecting preinjective component ${\mathcal Q}^{B^{(r)}}$ containing all
indecomposable injective $B^{(r)}$-modules. We also note that
the left border $\Delta_l$ of the generalized multicoil $\Gamma^*$ of $\Gamma_B$ is given by the quivers $P_{17}\to S_{17}$ and $S_{20}$, and the right
border $\Delta_r$ of $\Gamma^*$ is given by the quivers $T\to I_{13}\to S_{12}$ and $S_{24}\to M$. Further,
the algebra $B(\Gamma\setminus\Gamma^{cc}) = A/\ann_A(\Gamma\setminus\Gamma^{cc})$ is the disjoint union of three representation-finite convex subcategories of $A$:
$D_1$ given by the vertices 12, 13, 14, 15, 20, 21, 22, $D_2$ given by the vertices 17, 18, and $D_3$ given by the vertices 24, 25, 26.
We note that $D_3$ is the faithful algebra $B(\Gamma')$ of the finite cyclic component $\Gamma'$.
It follows from \cite[Theorems C and F]{MS2} that the Auslander-Reiten quiver $\Gamma_B$ of the generalized multicoil enlargement $B$ of $C$ is of the form
$$\Gamma_B={\mathcal P}^B \cup {\mathcal C}^B \cup {\mathcal Q}^B,$$
where ${\mathcal P}^B = {\mathcal P}^{B^{(l)}}$, ${\mathcal Q}^B = {\mathcal Q}^{B^{(r)}}$, and ${\mathcal C}^B$ is the family
${\mathcal C}^{B}_{\lambda}$, ${\lambda\in\Lambda(C)}$, of pairwise orthogonal generalized multicoils such that ${\mathcal C}^B_1 = \Gamma^*$
and ${\mathcal C}^{B}_{\lambda}={\mathcal T}^{C}_{\lambda}$ for all ${\lambda\in\Lambda(C)}\setminus\{1\}$. Hence $\Gamma_A$ is of the form
$$\Gamma_A={\mathcal P}^A \cup {\mathcal C}^A \cup {\mathcal Q}^A,$$
where ${\mathcal P}^A = {\mathcal P}^{B^{(l)}}$, ${\mathcal Q}^A = {\mathcal Q}^{B^{(r)}}$, and ${\mathcal C}^A$ is the family
${\mathcal C}^{A}_{\lambda}$, ${\lambda\in\Lambda(C)}$, of pairwise orthogonal generalized standard components such that
${\mathcal C}^A_1 = {\mathcal C}$, ${\mathcal C}^{A}_{\lambda}={\mathcal T}^{C}_{\lambda}$ for all ${\lambda\in\Lambda(C)}\setminus\{1\}$.
Moreover, we have
$$\Hom_A({\mathcal C}^{A},{\mathcal P}^{A})=0, \Hom_A({\mathcal Q}^{A},{\mathcal C}^{A})=0, \Hom_A({\mathcal Q}^{A},{\mathcal P}^{A})=0.$$
In particular, $A$ is a cycle-finite algebra with $(\rad^{\infty}_A)^3=0$.
\end{ex}
\begin{ex}
Let $K$ be a field and $B=KQ/J$ the bound quiver algebra given by the quiver $Q$ of the form
\[
\xymatrix@C=14pt@R=16pt{
&&38\cr
31\ar[r]^{\psi_3}&30\ar[r]^{\psi_2}&29\ar[u]_{\kappa_1}&26\ar[l]_{\psi_1}\ar[ld]_{\varphi_2}&32\ar[l]_{\eta_1}\ar[r]^{\eta_2}&33\ar@/_2pc/[lllu]_{\kappa_2}&&&&&&19\ar[lld]_{\varepsilon_3}\cr
&&25&&28\ar[lu]_{\varphi_1}\ar[ld]^{\varphi_3}&34\ar[u]_{\omega_2}\ar[d]^{\omega_1}&&11\ar[rr]_{\delta_3}&&10\ar[rr]_{\delta_2}&&9&18\ar[lu]_{\varepsilon_2}\ar[lddd]^{\varepsilon_1}\cr
&&&27\ar[lu]^{\varphi_4}&&15\ar[r]^{\sigma_4}&14\ar[r]^{\sigma_3}&13\ar[rr]^{\sigma_2}&&12\cr
36\ar[uuu]_{\pi_3}&&37\ar[rrrr]^{\lambda_1}\ar@/_8pc/[rrrrrrrrrrdddd]^{\lambda_2}&&&&17\ar[ld]_{\rho_2}&16\ar[l]_{\xi_2}\ar[d]^{\xi_1}\cr
&&&&&40&&1\ar[ll]_{\rho_1}\ar[ld]_{\alpha_4}&&2\ar[ll]_{\alpha_3}\ar[uu]_{\sigma_1}&&3\ar[ll]_{\alpha_2}\ar[uuu]^{\delta_1}\cr
&&&&&&0&&4\ar[ll]_{\beta_3}&&5\ar[ll]_{\beta_2}&&8\ar[lu]_{\alpha_1}\ar[ll]_{\beta_1}\ar[lld]_{\gamma_1}\cr
&&&&&39&&&6\ar[lll]_{\theta_2}\ar[llu]_{\gamma_3}&&7\ar[ll]_{\gamma_2}\ar[d]_{\mu_1}\cr
35\ar[uuuu]_{\pi_2}\ar[rrrrr]^{\pi_1}&&&&&24\ar[u]^{\theta_1}&23\ar[l]_{\nu_3}&&22\ar[ll]_{\nu_2}\ar[u]_{\nu_1}&&20&&21\ar[ll]_{\mu_2}\cr
}
\]
\vskip 5mm
\noindent and $J$ the ideal in the path algebra $KQ$ of $Q$ over $K$ generated by the elements
$\varphi_1\psi_1$, $\eta_1\varphi_2$, $\omega_1\sigma_4\sigma_3\sigma_2$, $\pi_3\psi_3\psi_2$, $\lambda_2\mu_2$, $\eta_2\kappa_2-\eta_1\psi_1\kappa_1$,
$\psi_2\kappa_1$, $\omega_2\kappa_2$, $\gamma_2\theta_2$, $\pi_1\theta_1$, $\nu_1\theta_2-\nu_2\nu_3\theta_1$, $\alpha_3\rho_1$, $\lambda_1\rho_2$,
$\xi_1\rho_1-\xi_2\rho_2$, $\alpha_1\alpha_2\alpha_3\alpha_4+\beta_1\beta_2\beta_3+\gamma_1\gamma_2\gamma_3$,
$\alpha_1\delta_1$, $\alpha_2\sigma_1$, $\xi_1\alpha_4$, $\varepsilon_1\delta_1$, $\varepsilon_1\alpha_2$, $\varepsilon_3\delta_2$, $\gamma_1\mu_1$, $\nu_1\gamma_3$.
Denote by $P_k, I_k, S_k$ the indecomposable projective module, the indecomposable injective module, and the simple module in $\mo B$
at the vertex $k$ of $Q$.
Then $\Gamma_B$ admits a cyclic component $\mathcal C$ obtained by identification the sectional paths $H_{1}\to H_{2}\to H_{3}$,
$L_{1}\to L_{2}$, $N_1\to N_2$ and the module $S_{21}$
occurring in the following three translation quivers: ${\mathcal C}_1$ of the form
$$\includegraphics[scale=0.8]{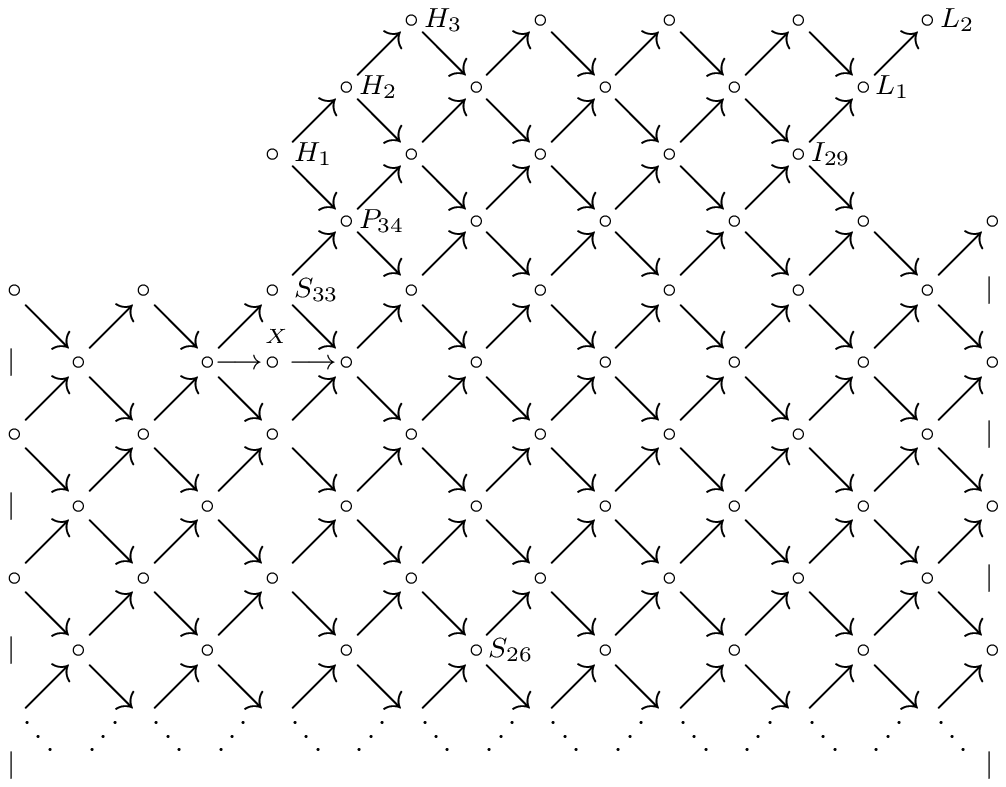}$$
${\mathcal C}_2$ of the form
$$\includegraphics[scale=0.8]{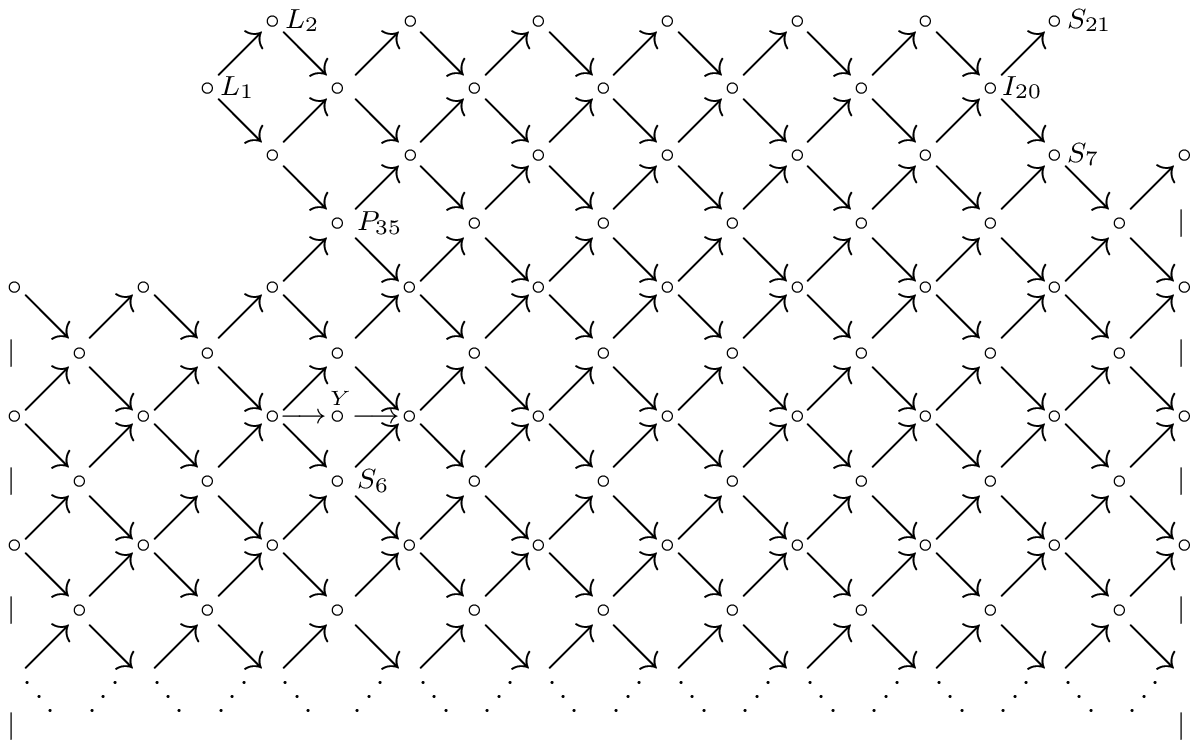}$$
and ${\mathcal C}_3$ of the form
$$\includegraphics[scale=0.8]{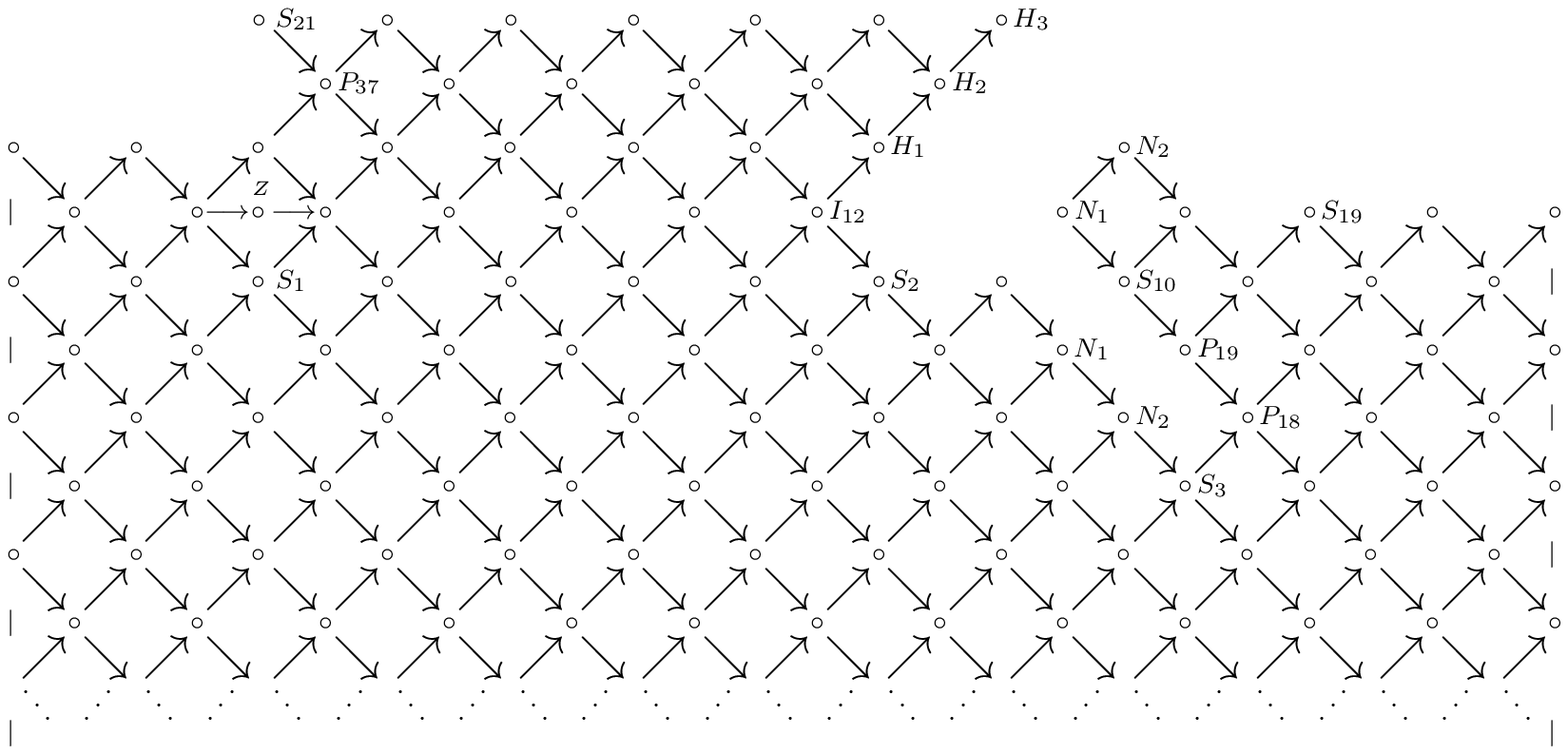}$$
where $X=I_{38}=P_{32}$, $Y=I_{39}=P_{22}$, $Z=I_{40}=P_{16}$, $N_2=I_9$ and  the vertical dashed lines have to be identified in order to obtain the translation quivers ${\mathcal C}_1$, ${\mathcal C}_2$ and ${\mathcal C}_3$.
We claim that $B$ is a generalized multicoil algebra. Denote by $Q_C$ the full subquiver of $Q$ given by the vertices $0, 1, 2, 3, 4, 5, 6, 7, 8$. Consider
the bound quiver algebra $C=KQ_C/J_C$ with $J_C$ the ideal in $KQ_C$ generated by $\alpha_1\alpha_2\alpha_3\alpha_4+\beta_1\beta_2\beta_3+\gamma_1\gamma_2\gamma_3$
and the path algebra $D=KQ_D$ of the full subquiver $Q_D$ of $Q$ given by the vertices $25, 26, 27, 28$. Then $C$ is a canonical algebra of wild type
and $D$ is a canonical algebra of Euclidean type $\widetilde{{\Bbb A}}_3$.
It is known that $\Gamma_C$ admits an infinite family $\mathcal{T}^{C}_{\lambda}$, ${\lambda\in\Lambda(C)}$, of pairwise orthogonal stable tubes,
having a unique stable tube, say $\mathcal{T}_{1}^C$, of rank 4 with the mouth formed by the modules $S_1=\tau_CS_2$, $S_2=\tau_CS_3$, $S_3=\tau_CE$, $E=\tau_CS_1$,
where $E$ is the unique indecomposable $C$-module with the dimension vector
$\underline{\dim}E = {\begin{smallmatrix}{\phantom{1}}0{\phantom{1}}0{\phantom{1}}0{\phantom{1}} \\ 1{\phantom{1}}1{\phantom{1}}1{\phantom{1}}1 \\
{\phantom{1}}{\phantom{1}}1{\phantom{1}}1{\phantom{1}}{\phantom{1}}
\end{smallmatrix}},$ and
a stable tube, say $\mathcal{T}_{2}^C$, of rank 3 with the mouth formed by the modules $S_6=\tau_CS_7$, $S_7=\tau_CF$, $F=\tau_CS_6$,
where $F$ is the unique indecomposable $C$-module with the dimension vector
$\underline{\dim}F = {\begin{smallmatrix}{\phantom{1}}1{\phantom{1}}1{\phantom{1}}1{\phantom{1}} \\ 1{\phantom{1}}1{\phantom{1}}1{\phantom{1}}1 \\
{\phantom{1}}{\phantom{1}}0{\phantom{1}}0{\phantom{1}}{\phantom{1}}
\end{smallmatrix}}$ (see \cite[(3.7)]{Ri1}).
Moreover, $\Gamma_D$ admits an infinite family $\mathcal{T}^{D}_{\mu}$, ${\mu\in\Lambda(D)}$, of pairwise orthogonal stable tubes,
having a stable tube, say $\mathcal{T}_{1}^D$, of rank 2 with the mouth formed by the modules $S_{26}=\tau_DG$, $G=\tau_DS_{26}$,
where $G$ is the unique indecomposable $D$-module with the dimension vector
$\underline{\dim}G = {\begin{smallmatrix}{\phantom{1}}0{\phantom{1}} \\ 1{\phantom{1}}1 \\ {\phantom{1}}1{\phantom{1}}
\end{smallmatrix}}$.
Denote by $C_i=KQ_{C_i}/J_{C_i}$ the bound quiver algebra, where $Q_{C_i}$ is the full subquiver of $Q$ given by the vertices
0, 1, 2, $\ldots, i$, $i\geq 8$ ($C_8=C$), $J_{C_i}=J\cap KQ_{C_i}$, and by $D_j=KQ_{D_j}/J_{D_j}$ the bound quiver algebra, where $Q_{D_j}$ is the full subquiver
of $Q$ given by the vertices 25, 26, 27, $\ldots, j$, $j\geq 28$ ($D_{28}=D$), $J_{D_j}=J\cap KQ_{D_j}$.
Moreover, for each $k\in\{8, 9, \ldots, 39\}$ (respectively, $k\in\{28, 29, \ldots, 33\}$)
and $l\in\{0, 1, \ldots, 39\}$ (respectively, $l\in\{28, 29, \ldots, 33\}$),
we denote by $P_l^{C_k}, I_l^{C_k}, S_l^{C_k}$ (respectively, $P_l^{D_k}, I_l^{D_k}, S_l^{D_k}$) the indecomposable projective module,
the indecomposable injective module, and the simple module in $\mo C_k$ (respectively, in $\mo D_k$) at the vertex $l$ of $Q_{C_k}$
(respectively, of $Q_{D_k}$).
Then $B$ is the generalized multicoil enlargement of $C\times D$, obtained by applications of the following admissible operations:
\begin{itemize}
\item one admissible operation of type (ad~1$^*$) with the pivot $S_3^{C_8}$, creating the vertices 9, 10, 11
and the arrows $\delta_1$, $\delta_2$, $\delta_3$;
\item one admissible operation of type (ad~1$^*$) with the pivot $S_{2}^{C_{11}}$, creating the vertices 12, 13, 14, 15
and the arrows $\sigma_1$, $\sigma_2$, $\sigma_3$, $\sigma_4$;
\item one admissible operation of type (ad~1) with the pivot $S_1^{C_{15}}$, creating the vertices 16, 17
and the arrows $\xi_1$, $\xi_2$;
\item one admissible operation of type (ad~4) with the pivot $S_3^{C_{17}}$ and the finite sectional path $S_{10}^{C_{17}}\to I_{10}^{C_{17}}$,
creating the vertices 18, 19 and the arrows $\varepsilon_1$, $\varepsilon_2$, $\varepsilon_3$;
\item one admissible operation of type (ad~1$^*$) with the pivot $S_7^{C_{19}}$, creating the vertices 20, 21
and the arrows $\mu_1$, $\mu_2$;
\item one admissible operation of type (ad~1) with the pivot $S_6^{C_{21}}$, creating the vertices 22, 23, 24
and the arrows $\nu_1$, $\nu_2$, $\nu_3$;
\item one admissible operation of type (ad~1$^*$) with the pivot $S_{26}^{D_{28}}$, creating the vertices 29, 30, 31
and the arrows $\psi_1$, $\psi_2$, $\psi_2$;
\item one admissible operation of type (ad~1) with the pivot $W$ being the unique indecomposable module of dimension $2$ having $S_{29}^{D_{31}}$
as the socle and $S_{26}^{D_{31}}$ as the top, creating the vertices 32, 33 and the arrows $\eta_1$, $\eta_2$;
\item one admissible operation of type (ad~4) with the pivot $S_{33}^{D_{33}}$ and the finite sectional path
$I_{13}^{C_{24}}\to I_{14}^{C_{24}}\to S_{15}^{C_{24}}$, creating the vertex 34 and the arrows $\omega_1$, $\omega_2$;
\item one admissible operation of type (ad~4) with the pivot $S_{24}^{C_{34}}$ and the finite sectional path $I_{30}^{C_{34}}\to S_{31}^{C_{34}}$,
creating the vertices 35, 36 and the arrows $\pi_1$, $\pi_2$, $\pi_3$;
\item one admissible operation of type (ad~4) with the pivot $S_{17}^{C_{36}}$ and the module $S_{21}^{C_{36}}$,
creating the vertex 37 and the arrows $\lambda_1$, $\lambda_2$;
\item one admissible operation of type (ad~2$^*$) with the pivot $P_{32}^{C_{37}}$,
creating the vertex 38 and the arrows $\kappa_1$, $\kappa_2$.
\item one admissible operation of type (ad~2$^*$) with the pivot $P_{22}^{C_{38}}$,
creating the vertex 39 and the arrows $\theta_1$, $\theta_2$.
\item one admissible operation of type (ad~2$^*$) with the pivot $P_{16}^{C_{39}}$,
creating the vertex 40 and the arrows $\rho_1$, $\rho_2$.
\end{itemize}
Then the left part $B^{(l)}$ of $B$ is the convex subcategory of $B$ being the product $B^{(l)}=B^{(l)}_1\times B^{(l)}_2$,
where $B^{(l)}_1=KQ^{(l)}_1/J^{(l)}_1$ is the branch coextension of the canonical algebra $C$ and $B^{(l)}_2=$ $KQ^{(l)}_2/J^{(l)}_2$
is the branch coextension of the canonical algebra $D$ given by the quivers
\[
\xymatrix@C=14pt@R=16pt{
&&38\cr
31\ar[r]^{\psi_3}&30\ar[r]^{\psi_2}&29\ar[u]_{\kappa_1}&26\ar[l]_{\psi_1}\ar[ld]_{\varphi_2}\cr
Q^{(l)}_2&&25&&28\ar[lu]_{\varphi_1}\ar[ld]^{\varphi_3}&&&11\ar[rr]_{\delta_3}&&10\ar[rr]_{\delta_2}&&9\cr
&&&27\ar[lu]^{\varphi_4}&&15\ar[r]^{\sigma_4}&14\ar[r]^{\sigma_3}&13\ar[rr]^{\sigma_2}&&12\cr
&&&&&&17\ar[ld]_{\rho_2}\cr
&&&&&40&&1\ar[ll]_{\rho_1}\ar[ld]_{\alpha_4}&&2\ar[ll]_{\alpha_3}\ar[uu]_{\sigma_1}&&3\ar[ll]_{\alpha_2}\ar[uuu]^{\delta_1}\cr
&&&&Q^{(l)}_1&&0&&4\ar[ll]_{\beta_3}&&5\ar[ll]_{\beta_2}&&8\ar[lu]_{\alpha_1}\ar[ll]_{\beta_1}\ar[lld]_{\gamma_1}\cr
&&&&&39&&&6\ar[lll]_{\theta_2}\ar[llu]_{\gamma_3}&&7\ar[ll]_{\gamma_2}\ar[d]_{\mu_1}\cr
&&&&&24\ar[u]^{\theta_1}&23\ar[l]_{\nu_3}&&&&20&&21\ar[ll]_{\mu_2}\cr
}
\]
and the ideals $J^{(l)}_1=KQ^{(l)}_1\cap J$ in $KQ^{(l)}_1$ and $J^{(l)}_2=KQ^{(l)}_2\cap J$ in $KQ^{(l)}_2$.
The right part $B^{(r)}$ of $B$ is the convex subcategory of $B$ being the product $B^{(r)}=B^{(r)}_1\times B^{(r)}_2$,
where $B^{(r)}_1=KQ^{(r)}_1/J^{(r)}_1$ is the branch extension of the canonical algebra $C$ and $B^{(r)}_2=KQ^{(r)}_2/J^{(r)}_2$
is the branch extension of the canonical algebra $D$ given by the quivers
\[
\xymatrix@C=14pt@R=16pt{
31\ar[r]^{\psi_3}&30&&26\ar[ld]_{\varphi_2}&32\ar[l]_{\eta_1}\ar[r]^{\eta_2}&33&&&&&&19\ar[lld]_{\varepsilon_3}\cr
&Q^{(r)}_2&25&&28\ar[lu]_{\varphi_1}\ar[ld]^{\varphi_3}&34\ar[u]_{\omega_2}\ar[d]^{\omega_1}&&11\ar[rr]_{\delta_3}&&10&&&18\ar[lu]_{\varepsilon_2}\ar[lddd]^{\varepsilon_1}\cr
&&&27\ar[lu]^{\varphi_4}&&15\ar[r]^{\sigma_4}&14\ar[r]^{\sigma_3}&13\cr
36\ar[uuu]_{\pi_3}&&37\ar[rrrr]^{\lambda_1}\ar@/_8pc/[rrrrrrrrrrdddd]^{\lambda_2}&&&&17&16\ar[l]_{\xi_2}\ar[d]^{\xi_1}\cr
&&&&&&&1\ar[ld]_{\alpha_4}&&2\ar[ll]_{\alpha_3}&&3\ar[ll]_{\alpha_2}\cr
&&&&&Q^{(r)}_1&0&&4\ar[ll]_{\beta_3}&&5\ar[ll]_{\beta_2}&&8\ar[lu]_{\alpha_1}\ar[ll]_{\beta_1}\ar[lld]_{\gamma_1}\cr
&&&&&&&&6\ar[llu]_{\gamma_3}&&7\ar[ll]_{\gamma_2}\cr
35\ar[uuuu]_{\pi_2}\ar[rrrrr]^{\pi_1}&&&&&24&23\ar[l]_{\nu_3}&&22\ar[ll]_{\nu_2}\ar[u]_{\nu_1}&&&&21\cr
}
\]
\vskip 5mm
\noindent and the ideals $J^{(r)}_1=KQ^{(r)}_1\cap J$ in $KQ^{(r)}_1$ and $J^{(r)}_2=KQ^{(r)}_2\cap J$ in $KQ^{(r)}_2$.
It follows from \cite[Theorems C and F]{MS2} that the Auslander-Reiten quiver $\Gamma_B$ of the generalized multicoil enlargement $B$ of $C\times D$
is of the form
$$\Gamma_B={\mathcal P}^B \cup {\mathcal C}^B \cup {\mathcal Q}^B,$$
where ${\mathcal P}^B, {\mathcal C}^B, {\mathcal Q}^B$ are of the following families of components:
\begin{itemize}
\item ${\mathcal C}^B$ is a family of pairwise orthogonal generalized multicoils consisting of the faithful cyclic component ${\mathcal C}$ (described above),
the family ${\mathcal T}^C_{\lambda}$, $\lambda\in\Lambda(C)\setminus\{1,2\}$, of stable tubes of $\Gamma_C$, and the family ${\mathcal T}^D_{\mu}$,
$\mu\in\Lambda(D)\setminus\{1\}$, of stable tubes of $\Gamma_D$;
\item ${\mathcal P}^B={\mathcal P}^{B^{(l)}}$ and consists of the unique postprojective component ${\mathcal P}(A^{(l)}_1)$ of the wild concealed algebra
$A^{(l)}_1$ being the convex subcategory of $B^{(l)}_1$ given by all object of $B^{(l)}_1$ except 8, the unique postprojective component
${\mathcal P}(B^{(l)}_2)={\mathcal P}^{B^{(l)}_2}$ of the tilted algebra $B^{(l)}_2$ of Euclidean type $\widetilde{\Bbb A}_7$, one component with the stable part
${\Bbb Z}{\Bbb A}_{\infty}$ containing the indecomposable projective $B^{(l)}_1$-module at the vertex 8, and infinitely many regular components of the form
${\Bbb Z}{\Bbb A}_{\infty}$;
\item ${\mathcal Q}^B={\mathcal Q}^{B^{(r)}}$ and consists of the unique preinjective component ${\mathcal Q}(A^{(r)}_1)$ of the wild concealed algebra
$A^{(r)}_1$ being the convex subcategory of $B^{(r)}_1$ given by all object of $B^{(r)}_1$ except 0, the unique preinjective component
${\mathcal Q}(B^{(r)}_2)={\mathcal Q}^{B^{(r)}_2}$ of the tilted algebra $B^{(r)}_2$ of Euclidean type $\widetilde{\Bbb A}_9$, one component with the stable part
${\Bbb Z}{\Bbb A}_{\infty}$ containing the indecomposable injective $B^{(r)}_1$-module at the vertex 0, and infinitely many regular components of the form
${\Bbb Z}{\Bbb A}_{\infty}$.
\end{itemize}
Moreover, we have
\[ \Hom_{B}({\mathcal C}^B,{\mathcal P}^B) = 0, \Hom_{B}({\mathcal Q}^B,{\mathcal C}^B)=0, \Hom_{B}({\mathcal Q}^B,{\mathcal P}^B) = 0. \]
We also note that $B$ is not a cycle-finite algebra, because $\Gamma_B$ contains regular components of the form ${\Bbb Z}{\Bbb A}_{\infty}$ (see \cite[Lemma 3]{Sk4}).

Finally, we mention that the cyclic component $\mathcal C$ of $\Gamma_B$ is the cyclic generalized multicoil obtained from the stable tubes
${\mathcal T}^C_1$, ${\mathcal T}^C_2$ of $\Gamma_C$ and the stable tube ${\mathcal T}^D_1$ of $\Gamma_D$ by the 14 translation quiver admissible operations
\cite[Section 2]{MS1} corresponding to the 14 admissible algebra operations leading from $C\times D$ to $B$, described above. We also point that the cyclic
component $\mathcal C$ has a M\"obius strip configuration obtained by identifying in ${\mathcal C}_3$ two sectional paths $N_1\to N_2$.
\end{ex}
\section{Examples: finite cyclic components}
\label{exs-fin}

In this section we present examples illustrating Theorem \ref{thm2} and showing faithful almost acyclic Auslander-Reiten components of new types.
\begin{ex} \label{ex6-1}
Let $K$ be a field, $n\geq 7$ a natural number, and $A_n=KQ_n/I_n$ the bound quiver algebra given by the quiver $Q_n$ of the form
\[
\xymatrix@C=23pt@R=18pt{
&&&2\ar[rrd]^{\gamma}\cr
0\ar@(l,u)^{\varepsilon}&1\ar[l]_{\eta}\ar[rru]^{\beta}\ar[rd]_{\varrho}&&&&5\ar[r]^{\sigma_{6}}&6\ar[r]^{\hspace{-1mm}\sigma_{7}}&
\cdots\ar[r]^{\hspace{-3mm}\sigma_{n-1}}&{n-1}\ar[r]^{\hspace{2mm}\sigma_n}&{n}\cr
&&3\ar[rr]_{\delta}&&4\ar[ru]_{\omega}\cr
}
\]
and $I_n$ the ideal in the path algebra $KQ_n$ of $Q_n$ over $K$ generated by the elements $\varepsilon^2, \eta\varepsilon$ and $\beta\gamma-\varrho\delta\omega$.
Then the category $\mo A_n$ is equivalent to the category $\rep_K(Q_n,I_n)$ of the $K$-linear representations of the bound quiver $(Q_n,I_n)$.
Consider the indecomposable module $M_n$ in $\mo A_n$ corresponding to the indecomposable representation in $\rep_K(Q_n,I_n)$ of the form
\[
\xymatrix@C=20pt@R=18pt{
&&&K\ar[rrd]^{1}\cr
K^2\ar@(l,u)^{\left[\begin{smallmatrix}0&1\\0&0\end{smallmatrix}\right]}&K\ar[l]_{\left[\begin{smallmatrix}0\\1\end{smallmatrix}\right]}\ar[rru]^{1}\ar[rd]_{1}&&&&K\ar[r]^{1}&K\ar[r]^{1}&\cdots\ar[r]^{1}&K\ar[r]^{1}&{K}\cr
&&K\ar[rr]_{1}&&K\ar[ru]_{1}\cr
}
\]
We note that $M_n$ is a faithful $A_n$-module, and hence $B(M_n)=A_n$. Let $\Omega_n$ be the full subquiver of $Q_n$ given by the vertices $2, 3, 4, 5, 6, \ldots, n-1, n$
and the arrows $\gamma, \delta, \omega, \sigma_6, \sigma_7, \ldots,$ $\sigma_{n-1}, \sigma_n$, and $H_n=K\Omega_n$ the associated path algebra. Then $H_n$ is
a hereditary algebra. Observe that $H_7, H_8, H_9$ are hereditary algebras of Dynkin types ${\Bbb E}_6$, ${\Bbb E}_7$, ${\Bbb E}_8$ (respectively), $H_{10}$ is
a hereditary algebra of Euclidean type $\widetilde{{\Bbb E}}_8$, and, for $n\geq 11$, $H_n$ is a hereditary algebra of wild type. For each $i\in\{0, 1, \ldots, n-1, n\}$,
we denote by $P_i, I_i, S_i$ the indecomposable projective module, the indecomposable injective module, the simple module in $\mo A_n$ at the vertex $i$ of $Q_n$.
Moreover, for each $j\in\{2, 3, 4, \ldots, n-1, n\}$, we denote by $I^*_j$ the indecomposable injective module in $\mo H_n$ at the vertex $j$ of $\Omega_n$.
Further, let $\Lambda = K[\varepsilon]/(\varepsilon^2)$. Then $P_0$ is the indecomposable projective module in $\mo \Lambda$ and $S_0$ is its top. Finally, observe
that $A_n$ is the one-point extension algebra
\[
\left[\begin{matrix}\Lambda\times H_n&0 \\ S_0\oplus I^*_n&K\end{matrix}\right]
\]
of $\Lambda\times H_n$ by the module $S_0\oplus I^*_n$, with the extension vertex $1$.
Since $I^*_n$ is the indecomposable injective module in $\mo H_n$ and $H_n$ is a hereditary algebra, we conclude that
$$\Hom_{\Lambda\times H_n}(S_0\oplus I^*_n,\tau_{H_n}X)=\Hom_{H_n}(I^*_n,\tau_{H_n}X)=0$$
for any module $X$ in $\ind H_n$. Then, applying \cite[Corollary XV.1.7]{SS2} (see also \cite[Lemma 5.6]{SY0}), we conclude that every almost split sequence
in $\mo H_n$ is an almost split sequence in $\mo A_n$.
This implies that the Auslander-Reiten quiver $\Gamma_{H_n}$ of $H_n$ is a full translation subquiver of the Auslander-Reiten quiver $\Gamma_{A_n}$ of $A_n$.
In particular, we obtain that the preinjective component $Q(H_n)$ of $\Gamma_{H_n}$, containing the indecomposable injective modules $I^*_j$ , $j\in\{2, 3, 4,$ $\ldots, n-1, n\}$,
is a full translation subquiver of a component $\mathcal{C}_n$ of $\Gamma_{A_n}$ which is closed under predecessors.
Then the direct calculation shows that $\mathcal{C}_n$ is a component of the form
%
%
$$\includegraphics[scale=0.6]{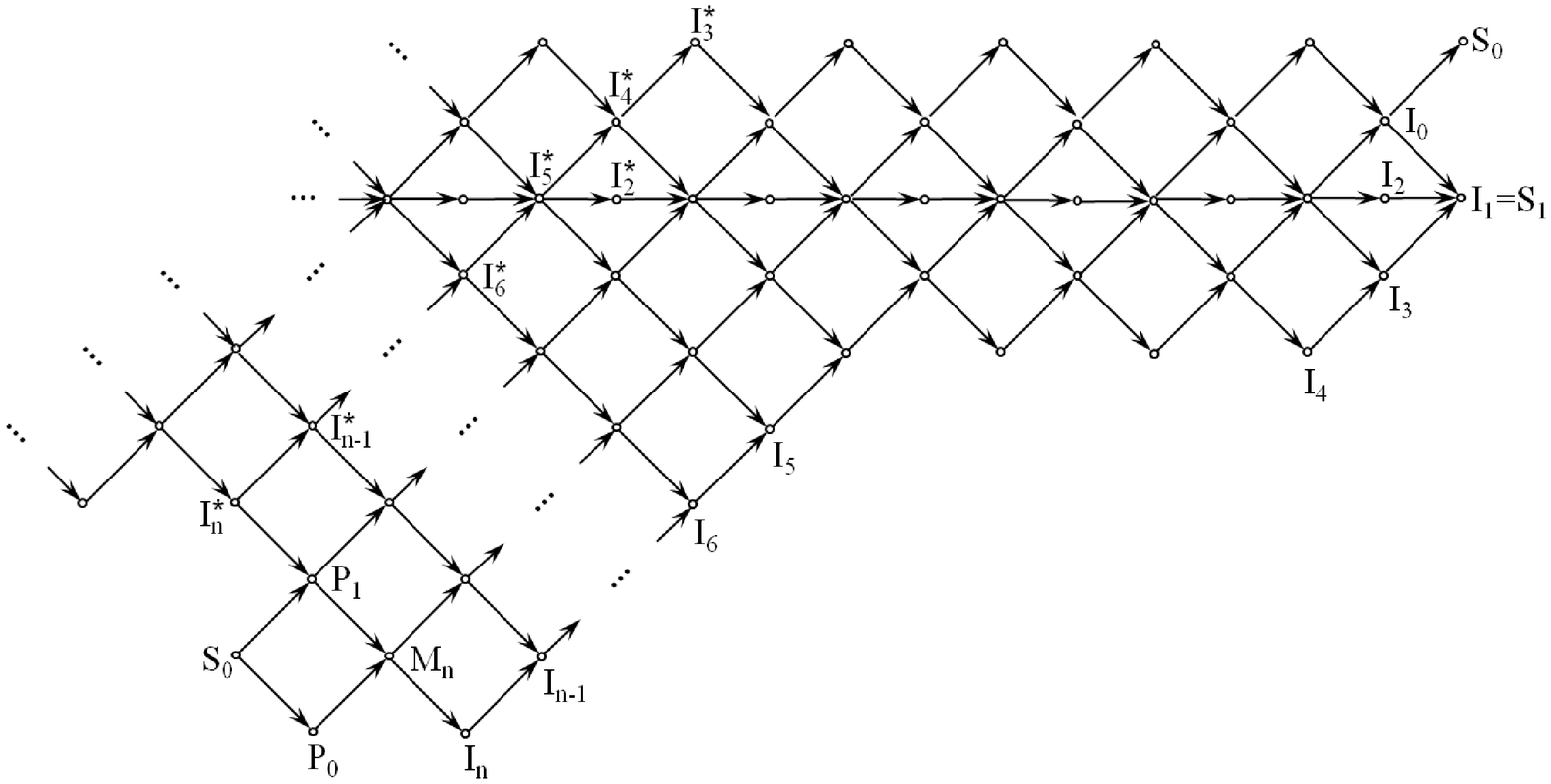}$$
Observe that $\mathcal{C}_n$ is an almost acyclic component of $\Gamma_{A_n}$, contains the faithful module $M_n$, and is closed under successors in $\ind A_n$.
Hence $\mathcal{C}_n$ is a faithful, almost acyclic, generalized standard component of $\Gamma_A$. Then it follows from \cite[Theorem 3.1]{RS2} that $A_n$ is
a generalized double tilted algebra and $\mathcal{C}_n$ is its unique connecting component. We also note that $\mathcal{C}_n$ admits a unique multisection
$\Delta=\Delta_n$ consisting of all indecomposable modules in $\mathcal{C}_n$ which lie on oriented cycles passing through the simple module $S_0$.
Moreover, we have $\Delta'_l=\Delta=\Delta'_r$, and hence $\Delta=\Delta_c$. Further, the left part $\Delta_l$ of $\Delta$ coincides with $\tau_{A_n}\Delta''_r$
and consists of the indecomposable modules $I^*_j$, for $j\in\{2, 3, 4, \ldots, n-1, n\}$. Similarly, the right part $\Delta_r$ of $\Delta$ coincides with
$\tau_{A_n}^{-1}\Delta''_l$ and consists of the indecomposable injective modules $I_1, I_2, I_3, I_4$. Therefore, the left tilted part $A^{(l)}_n$ of $A_n$ is the
hereditary algebra $H_n$ and the right tilted part $A^{(r)}_n$ of $A_n$ is the path algebra $K\Sigma$ of the quiver $\Sigma$ of the form
$$\xymatrix@C=20pt@R=10pt{
2&1\ar[l]_{\beta}\ar[r]^{\varrho}&3\ar[r]^{\delta}&4.\\
}$$
Observe now that $\Delta=\Delta_c$ is a cycle-finite finite component of $_c\Gamma_{A_n}$ containing the faithful indecomposable module $M_n$,
because $\mathcal{C}_n$ is a generalized standard component of $\Gamma_A$ closed under successors in $\ind A_n$. In particular, we conclude that $\Delta$
is the cyclic component $\Gamma(M_n)$ of the module $M_n$ and $B(\Gamma(M_n))=B(M_n)=A_n$ is a generalized double tilted algebra. We also mention that
$A_7, A_8, A_9$ are of finite representation type with $\Gamma_{A_7}={\mathcal C}_7$, $\Gamma_{A_8}={\mathcal C}_8$, $\Gamma_{A_9}={\mathcal C}_9$,
and hence are cycle-finite algebras. Further, $A_{10}$ is a cycle-finite algebra of infinite representation type whose Auslander-Reiten quiver has the disjoint
union decomposition
$$\Gamma_{A_{10}}={\mathcal P}(H_{10})\cup {\mathcal T}^{H_{10}}\cup {\mathcal C}_{10},$$
where ${\mathcal P}(H_{10})$ is the postprojective component and ${\mathcal T}^{H_{10}}$ an infinite family of pairwise orthogonal generalized standard stable tubes
of $\Gamma_{H_{10}}$. On the other hand, the algebras $A_n$, for $n\geq 11$, are not cycle-finite because their Auslander-Reiten quivers admit regular components of
$\Gamma_{H_n}$ being of the form ${\Bbb Z}{\Bbb A}_{\infty}$, and hence consisting of indecomposable modules lying on infinite cycles (see \cite[Theorem]{Sk4}).
More precisely, for $n\geq 11$, the Auslander-Reiten quiver of $A_n$ has the disjoint
union decomposition
$$\Gamma_{A_{n}}={\mathcal P}(H_{n})\cup {\mathcal R}(H_{n})\cup {\mathcal C}_{n},$$
where ${\mathcal P}(H_{n})$ is the postprojective component and ${\mathcal R}(H_{n})$ is an infinite family of regular components of the form ${\Bbb Z}{\Bbb A}_{\infty}$
in $\Gamma_{H_{n}}$. We also mention that the algebras $A_n$, for $n\geq 7$, are of infinite global dimension, because the simple module $S_0$ is
of infinite projective dimension.
\end{ex}
\begin{ex} \label{ex6-2}
Let $K$ be a field, $m, n\geq 8$ natural numbers, and $B_{m,n}=KQ_{m,n}/I_{m,n}$ the bound quiver algebra given by the quiver $Q_{m,n}$ of the form
\[
\xymatrix@C=12pt@R=18pt{
&&3\ar[lld]_{\varrho}&&&&0\ar[rd]^{\beta}&&&&3'\ar[lld]_{\psi}\cr
6\ar[d]_{\alpha_7}&&&&2\ar[llu]_{\xi}\ar[ld]_{\eta}\ar[r]^{\sigma}&1\ar[ru]^{\alpha}\ar[rr]_{\gamma}&&1'\ar[r]^{\delta}&2'&&&&6'\ar[llu]_{\varphi}\ar[ld]_{\lambda}\cr
7\ar[d]_{\alpha_8}&5\ar[lu]_{\mu}&&4\ar[ll]_{\omega}&&&&&&4'\ar[lu]_{\nu}&&5'\ar[ll]_{\theta}&7'\ar[u]_{\beta_7}\cr
\vdots\ar[d]_{\alpha_{m-1}}&&&&&&&&&&&&\vdots\ar[u]_{\beta_8}\cr
m-1\ar[rr]^{\alpha_m}&&m&&&&&&&&n'\ar[rr]^{\beta_n}&&(n-1)'\ar[u]_{\beta_{n-1}}\cr
}
\]
and $I_{m,n}$ the ideal in the path algebra $KQ_{m,n}$ of $Q_{m,n}$ over $K$ generated by the elements
$\alpha\beta$, $\sigma\alpha$, $\beta\delta$, $\sigma\gamma\delta$, $\xi\varrho-\eta\omega\mu$, $\varphi\psi-\lambda\theta\nu$.
Then the category $\mo B_{m,n}$ is equivalent to the category $\rep_K(Q_{m,n},I_{m,n})$ of the $K$-linear representations of the bound quiver $(Q_{m,n},I_{m,n})$.
Consider the indecomposable module $M_m$ in $\mo B_{m,n}$ corresponding to the indecomposable representation in $\rep_K(Q_{m,n},I_{m,n})$ of the form
\[
\xymatrix@C=12pt@R=18pt{
&&K\ar[lld]_{1}&&&&K\ar[rd]^{1}&&&&0\ar[lld]\cr
K\ar[d]_{1}&&&&K\ar[llu]_{1}\ar[ld]_{1}\ar[r]^{1}&K\ar[ru]^{0}\ar[rr]_{1}&&K\ar[r]&0&&&&0\ar[llu]\ar[ld]\cr
K\ar[d]_{1}&K\ar[lu]_{1}&&K\ar[ll]_{1}&&&&&&0\ar[lu]&&0\ar[ll]&0\ar[u]\cr
\vdots\ar[d]_{1}&&&&&&&&&&&&\vdots\ar[u]\cr
K\ar[rr]^{1}&&K&&&&&&&&0\ar[rr]&&0\ar[u]\cr
}
\]
and the indecomposable module $N_n$ in $\mo B_{m,n}$ corresponding to the indecomposable representation in $\rep_K(Q_{m,n},I_{m,n})$ of the form
\[
\xymatrix@C=12pt@R=18pt{
&&0\ar[lld]&&&&K\ar[rd]^{0}&&&&K\ar[lld]_{1}\cr
0\ar[d]&&&&0\ar[llu]\ar[ld]\ar[r]&K\ar[ru]^{1}\ar[rr]_{1}&&K\ar[r]^{1}&K&&&&K\ar[llu]_{1}\ar[ld]_{1}\cr
0\ar[d]&0\ar[lu]&&0\ar[ll]&&&&&&K\ar[lu]_{1}&&K\ar[ll]_{1}&K\ar[u]_{1}\cr
\vdots\ar[d]&&&&&&&&&&&&\vdots\ar[u]_{1}\cr
0\ar[rr]&&0&&&&&&&&K\ar[rr]^{1}&&K\ar[u]_{1}\cr
}
\]
We note that $M_m\oplus N_n$ is a faithful $B_{m,n}$-module.

Let $\Omega_m$ be the subquiver of $Q_{m,n}$ given by the vertices $3, 4, 5, 6, 7, \ldots, m-1, m$ and the arrows
$\varrho$, $\omega$, $\mu$, $\alpha_7$, $\alpha_8, \ldots, \alpha_{m-1}, \alpha_m$, and $H_m=K\Omega_m$ the path algebra of $\Omega_m$ over $K$.
Similarly, let $\Omega'_n$ be the subquiver of $Q_{m,n}$ given by the vertices $3', 4', 5', 6', 7', \ldots, (n-1)', n'$ and the arrows
$\varphi$, $\theta$, $\lambda$, $\beta_7$, $\beta_8, \ldots, \beta_{n-1}, \beta_n$, and $H'_n=K\Omega'_n$ the path algebra of $\Omega'_n$ over $K$.
Then $H_m$ and $H'_n$ are hereditary algebras. Moreover, $H_8$ and $H'_8$ are of Dynkin type ${\Bbb E}_6$, $H_9$ and $H'_9$ are of Dynkin type ${\Bbb E}_7$,
$H_{10}$ and $H'_{10}$ are of Dynkin type ${\Bbb E}_8$, $H_{11}$ and $H'_{11}$ are of Euclidean type $\widetilde{{\Bbb E}}_8$, and $H_m$ and $H'_n$,
for $m, n\geq 12$, are of wild type. For each $i\in\{3, 4, \ldots, m-1, m\}$, we denote by $I_i^*$ the indecomposable injective $H_m$-module at the vertex $i$.
Similarly, for each $j'\in\{3', 4', \ldots, (n-1)', n'\}$, we denote by $P_{j'}^*$ the indecomposable projective $H'_n$-module at the vertex $j'$.
Furthermore, for each vertex $i$ of $Q_{m,n}$, we denote by $P_i, I_i, S_i$ the indecomposable projective module, the indecomposable injective module,
and the simple module in $\mo B_{m,n}$ at the vertex $i$. Finally, we denote by $\Sigma$ the subquiver of $Q_{m,n}$ given by the vertices $0, 1, 1'$ and the arrows
$\alpha, \beta, \gamma$, and $\Lambda=K\Sigma/J$ the bound quiver algebra with $J$ the ideal in the path algebra $K\Sigma$ of $\Sigma$ over $K$ generated by
$\alpha\beta$. We denote by $R$ and $T$ the indecomposable modules in $\mo\Lambda$ corresponding to the representations
\[
\xymatrix@C=20pt@R=18pt{
&0\ar[rd]&&&&K^2\ar[rd]^{\left[\begin{smallmatrix}0&1\end{smallmatrix}\right]}\cr
K\ar[rr]_{1}\ar[ru]&&K&{\rm and}&K\ar[rr]_{1}\ar[ru]^{\left[\begin{smallmatrix}1\\0\end{smallmatrix}\right]}&&K\cr
}
\]
in $\rep_K(\Sigma,J)$, respectively. Moreover, denote by $\overline{P}_1$ the indecomposable projective $\Lambda$-module at the vertex $1$ and
by $\overline{I}_{1'}$ the indecomposable injective $\Lambda$-module at the vertex $1'$, and observe that $P_0$ is the indecomposable projective
$\Lambda$-module at the vertex $0$ and $I_0$ is the indecomposable injective $\Lambda$-module at the vertex $0$.

We claim that $B_{m,n}$ is a generalized double tilted algebra and the indecomposable modules $M_m$ and $N_n$ belong to a cycle-finite cyclic component
$\Gamma_{m,n}$, and hence $B(\Gamma_{m,n})=B_{m,n}$. More precisely, we will show that $\Gamma_{m,n}$ is the cyclic part of the almost acyclic generalized
standard component ${\mathcal C}_{m,n}$ of $\Gamma_{B_{m,n}}$ obtained by identification the modules $R, T$ and $S_0$ occurring in the following two
translation quivers: ${\mathcal C}^-_m$ of the form
%
%
$$\includegraphics[scale=0.63]{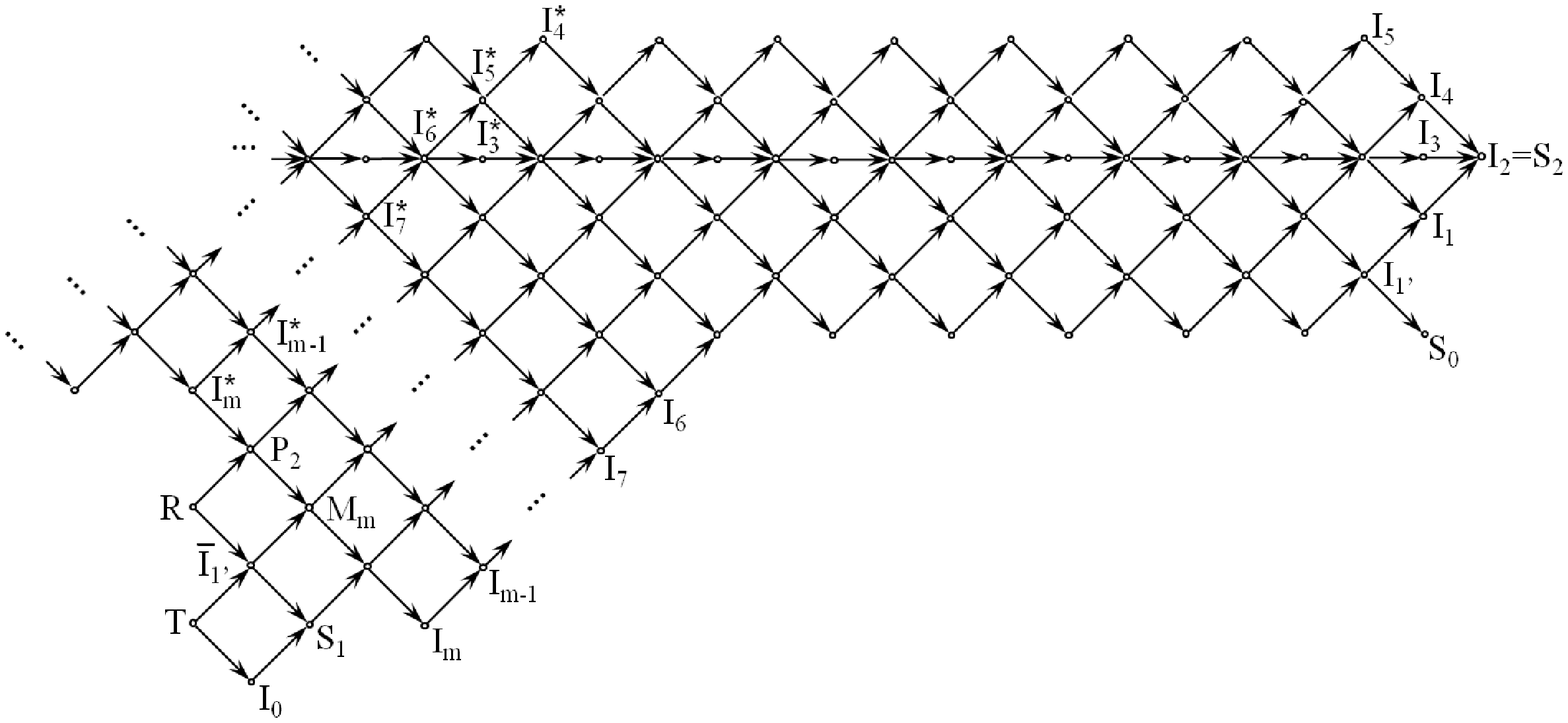}$$
and ${\mathcal C}^+_n$ of the form
%
$$\hskip -3mm\includegraphics[scale=0.63]{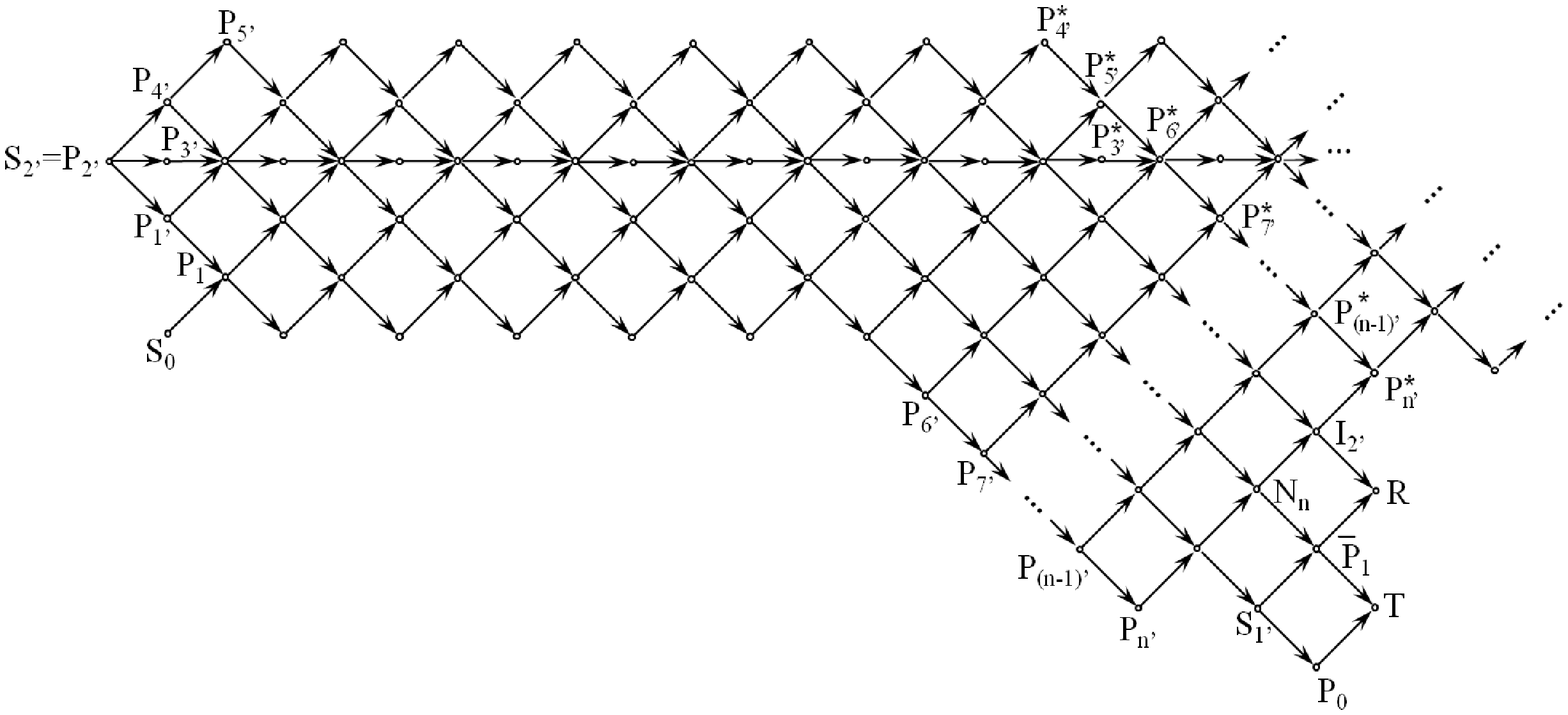}$$
Let $Q_m$ be the subquiver of $Q_{m,n}$ given by the vertices $1'$, $0$, $1$, $2$, $3$, $4$, $5$, $6$, $7, \ldots, m-1, m$ and the arrows $\alpha$, $\beta$, $\gamma$, $\xi$,
$\varrho$, $\eta$, $\omega$, $\mu$, $\alpha_7$, $\alpha_8, \ldots, \alpha_{m-1}, \alpha_m$, $J_m$ the ideal in the path algebra $KQ_m$ of $Q_m$ over $K$ generated
by $\alpha\beta$, $\sigma\alpha$, $\xi\varrho-\eta\omega\mu$, and $C_m=KQ_m/J_m$ the associated bound quiver algebra. Then $C_m$ is the one-point extension algebra
\[
\left[\begin{matrix}\Lambda\times H_m&0 \\ R\oplus I^*_m&K\end{matrix}\right]
\]
of $\Lambda\times H_m$ by the module $R\oplus I^*_m$, with the extension vertex $2$.
Since $I^*_m$ is the indecomposable injective module over the hereditary algebra $H_m$, we conclude that
$$\Hom_{\Lambda\times H_m}(R\oplus I^*_m,\tau_{H_m}X)=\Hom_{H_m}(I^*_m,\tau_{H_m}X)=0$$
for any indecomposable module $X$ in $\mo H_m$. Then, applying \cite[Corollary XV.1.7]{SS2} (or \cite[Lemma 5.6]{SY0}), we conclude that every almost split sequence
in $\mo H_m$ is an almost split sequence in $\mo C_m$.
This implies that the Auslander-Reiten quiver $\Gamma_{H_m}$ of $H_m$ is a full translation subquiver of the Auslander-Reiten quiver $\Gamma_{C_m}$ of $C_m$.
Moreover, a direct calculation shows that the component ${\mathcal C}_m$ of $\Gamma_{C_m}$, containing the indecomposable injective $H_m$-modules $I^*_i$,
$i\in\{3, 4, 5, 6, 7, \ldots, m-1, m\}$, is the translation quiver obtained from the translation quiver $\mathcal{C}^-_m$ and the translation quiver below
\[
\xymatrix@C=10pt@R=10pt{
S_0\ar[rd]&&R\ar[rd]\cr
&\overline{P}_1\ar[ru]\ar[rd]&&\overline{I}_{1'}\ar[rd]\cr
S_{1'}\ar[ru]\ar[rd]&&T\ar[ru]\ar[rd]&&S_1\cr
&P_0\ar[ru]&&I_{0}\ar[ru]\cr
}
\]
by identifying the common modules $R$, $\overline{I}_{1'}$, $S_1$, $T$, $I_0$ and $S_0$. We observe that the Auslander-Reiten quiver $\Gamma_{C_m}$ consists
of the component ${\mathcal C}_m$ and the components of $\Gamma_{H_m}$ different from the preinjective component.
We also note that the indecomposable module $M_m$ is a unique sincere module in $\ind C_m$, and $M_m$ is not a faithful module in $\mo C_m$.

Dually, let $Q'_n$ be the subquiver of $Q_{m,n}$ given by the vertices $1$, $0$, $1'$, $2'$, $3'$, $4'$, $5'$, $6'$, $7', \ldots, (n-1)', n$ and the arrows
$\alpha$, $\beta$, $\gamma$, $\delta$, $\psi$, $\varphi$, $\nu$, $\theta$, $\lambda$, $\beta_7$, $\beta_8, \ldots, \beta_{n-1}, \beta_n$, $J'_n$ the ideal in
the path algebra $KQ'_n$ of $Q'_n$ over $K$ generated by $\alpha\beta$, $\beta\delta$, $\varphi\psi-\lambda\theta\nu$, and $C'_n=KQ'_n/J'_n$ the associated
bound quiver algebra. Then $C'_n$ is the one-point coextension algebra
\[
\left[\begin{matrix} K&0 \\ \Hom_K(R\oplus P^*_{n'},K) & \Lambda\times H'_n   \end{matrix}\right]
\]
of $\Lambda\times H'_n$ by the module $R\oplus P^*_{n'}$, with the coextension vertex $2'$.
Since $P^*_{n'}$ is the indecomposable projective module over the hereditary algebra $H'_n$, we conclude that
$$\Hom_{\Lambda\times H'_n}(\tau^{-1}_{H'_n}Y,R\oplus P^*_{n'})=\Hom_{H'_n}(\tau^{-1}_{H'_n}Y,P^*_{n'})=0$$
for any indecomposable module $Y$ in $\mo H'_n$. Then, applying the dual of \cite[Corollary XV.1.7]{SS2} (or \cite[Lemma 5.6]{SY0}), we conclude that every
almost split sequence in $\mo H'_n$ is an almost split sequence in $\mo C'_n$.
This implies that the Auslander-Reiten quiver $\Gamma_{H'_n}$ of $H'_n$ is a full translation subquiver of the Auslander-Reiten quiver $\Gamma_{C'_n}$ of $C'_n$.
Moreover, a direct calculation shows that the component ${\mathcal C}'_n$ of $\Gamma_{C'_n}$, containing the indecomposable projective $H'_n$-modules $P^*_{j'}$,
$j'\in\{3', 4', 5', 6', 7', \ldots, (n-1)', n'\}$, is the translation quiver obtained from the translation quiver $\mathcal{C}^+_n$ and the translation quiver below
\[
\xymatrix@C=10pt@R=10pt{
&&R\ar[rd]&&S_0\cr
&\overline{P}_1\ar[ru]\ar[rd]&&\overline{I}_{1'}\ar[rd]\ar[ru]\cr
S_{1'}\ar[ru]\ar[rd]&&T\ar[ru]\ar[rd]&&S_1\cr
&P_0\ar[ru]&&I_{0}\ar[ru]\cr
}
\]
by identifying the common modules $S_{1'}$, $P_0$, $\overline{P}_{1}$, $R$, $T$ and $S_0$. We observe that the Auslander-Reiten quiver $\Gamma_{C'_n}$ consists
of the component ${\mathcal C}'_n$ and the components of $\Gamma_{H'_n}$ different from the postprojective component.
We also note that the indecomposable module $N_n$ is a unique sincere module in $\ind C'_n$, and $N_n$ is not a faithful module in $\mo C'_n$.

Further, we observe that the algebra $B_{m,n}=KQ_{m,n}/I_{m,n}$ is the one-point extension algebra
\[
\left[\begin{matrix}C'_n\times H_m&0 \\ R\oplus I^*_m&K\end{matrix}\right]
\]
of $C'_n\times H_m$ by the module $R\oplus I^*_m$, with the extension vertex $2$.
It follows from the structure of the Auslander-Reiten quiver $\Gamma_{C'_n}$ of $C'_n$ that, for any indecomposable module $Z$ in $\mo C'_n$ nonisomorphic to
the simple module $S_0$, we have
$$\Hom_{C'_n\times H_m}(R\oplus I^*_m,\tau_{C'_n}Z)=\Hom_{C'_n}(R,\tau_{C'_n}Z)=0.$$
Then, applying \cite[Corollary XV.1.7]{SS2} (or \cite[Lemma 5.6]{SY0}) again, we conclude that every almost split sequence in $\mo C'_n$ with the right
term nonisomorphic to $S_0$ is an almost split sequence in $\mo B_{m,n}$.
This shows that the translation quiver obtained from $\Gamma_{C'_n}$ by removing the module $S_0$ and two arrows attached to it is a full translation subquiver
of $\Gamma_{B_{m,n}}$. In particular, we conclude that the almost split sequence in $\mo C'_n$ with the left term $S_0$ is an almost split sequence in $\mo B_{m,n}$.

Finally, we observe that the algebra $B_{m,n}=KQ_{m,n}/I_{m,n}$ is also the one-point coextension algebra
\[
\left[\begin{matrix} K&0 \\ \Hom_K(R\oplus P^*_{n'},K) & C_m\times H'_n   \end{matrix}\right]
\]
of $C_m\times H'_n$ by the module $R\oplus P^*_{n'}$, with the coextension vertex $2'$.
It follows also from the structure of the Auslander-Reiten quiver $\Gamma_{C_m}$ of $C_m$ that, for any indecomposable module $Z$ in $\mo C_m$ nonisomorphic to
the simple module $S_0$, we have
$$\Hom_{C_m\times H'_n}(\tau_{C_m}^{-1}Z,R\oplus P^*_{n'})=\Hom_{C_m}(\tau_{C_m}^{-1}Z,R)=0.$$
Then, applying the dual of \cite[Corollary XV.1.7]{SS2} (or \cite[Lemma 5.6]{SY0}) again, we conclude that every almost split sequence in $\mo C_m$ with the left
term nonisomorphic to $S_0$ is an almost split sequence in $\mo B_{m,n}$.
This shows that the translation quiver obtained from $\Gamma_{C_m}$ by removing the module $S_0$ and two arrows attached to it is a full translation subquiver
of $\Gamma_{B_{m,n}}$. In particular, we obtain that the almost split sequence in $\mo C_m$ with the right term $S_0$ is also an almost split sequence in $\mo B_{m,n}$.

Summing up, we proved that $\Gamma_{B_{m,n}}$ contains the component ${\mathcal C}_{m,n}$ of the required form, containing the preinjective component ${\mathcal Q}(H_m)$
of $\Gamma_{H_m}$ as a full translation subquiver closed under predecessors and the postprojective component ${\mathcal P}(H'_n)$ of $\Gamma_{H'_n}$ as a full
translation subquiver closed under successors. Moreover, the Auslander-Reiten quiver $\Gamma_{B_{m,n}}$ of $B_{m,n}$ has a disjoint union decomposition
\[\Gamma_{B_{m,n}} = {\mathcal P}_{m,n} \cup {\mathcal C}_{m,n} \cup {\mathcal Q}_{m,n} \]
such that
\begin{itemize}
\item ${\mathcal P}_{m,n}$ is empty for $m\in\{8,9,10\}$;
\item ${\mathcal P}_{11,n}$ consists of the postprojective component ${\mathcal P}(H_{11})$ of Euclidean type $\widetilde{{\Bbb E}}_8$ and
an infinite family ${\mathcal T}^{H_{11}}$ of pairwise orthogonal generalized standard stable tubes in $\Gamma_{H_{11}}$;
\item ${\mathcal P}_{m,n}$, for $m\geq 12$, consists of the postprojective component ${\mathcal P}(H_{m})$ of wild type and an infinite family of regular
components of the form ${\Bbb Z}{\Bbb A}_{\infty}$ in $\Gamma_{H_{m}}$;
\item ${\mathcal Q}_{m,n}$ is empty for $n\in\{8,9,10\}$;
\item ${\mathcal Q}_{m,11}$ consists of the preinjective component ${\mathcal Q}(H'_{11})$ of Euclidean type $\widetilde{{\Bbb E}}_8$ and
an infinite family ${\mathcal T}^{H'_{11}}$ of pairwise orthogonal generalized standard stable tubes in $\Gamma_{H'_{11}}$;
\item ${\mathcal Q}_{m,n}$, for $n\geq 12$, consists of the preinjective component ${\mathcal Q}(H'_{n})$ of wild type and an infinite family of regular
components of the form ${\Bbb Z}{\Bbb A}_{\infty}$ in $\Gamma_{H'_{n}}$.
\end{itemize}

Finally, observe that ${\mathcal C}_{m,n}$ is an almost acyclic component of $\Gamma_{B_{m,n}}$ whose cyclic part $\Gamma_{m,n}$ is connected and consists
of all indecomposable modules in ${\mathcal C}_{m,n}$ which lie on oriented cycles passing through the simple module $S_0$. In fact, $\Gamma_{m,n}$ is the
unique multisection $\Delta$ of ${\mathcal C}_{m,n}$, and so $\Delta_c=\Gamma_{m,n}$. Further, $\Gamma_{m,n}$ contains the indecomposable modules $M_m$ and $N_n$.
Since $M_m\oplus N_n$ is a faithful module in $\mo B_{m,n}$, we conclude that $\Gamma_{m,n}$ is a faithful cyclic component of $\Gamma_{B_{m,n}}$, and hence
$B_{m,n}=B(\Gamma_{m,n})=B_{m,n}/\ann_{B_{m,n}}(\Gamma_{m,n})$. Observe also that $B_{m,n}=\su(\Gamma_{m,n})$.
In particular, ${\mathcal C}_{m,n}$ is a faithful component of $\Gamma_{B_{m,n}}$. Moreover,
the left part $\Delta_l$ of $\Delta$ coincides with $\tau_{B_{m,n}}\Delta''_r$ and consists of the indecomposable modules $I_i^*$, $i\in\{3, 4, \ldots, m-1\}$,
and the indecomposable modules $P_{1'}$, $\tau^{-1}_{B_{m,n}}P_{2'}$, $\tau^{-1}_{B_{m,n}}P_{3'}$, $\tau^{-1}_{B_{m,n}}P_{4'}$, $\tau^{-1}_{B_{m,n}}P_{5'}$.
Similarly, the right part $\Delta_r$ of $\Delta$ coincides with $\tau^{-1}_{B_{m,n}}\Delta''_l$ and consists of the indecomposable modules $P_{j'}^*$,
$j'\in\{3', 4', \ldots, (n-1)'\}$, and the indecomposable modules $I_{1}$, $\tau_{B_{m,n}}I_{2}$, $\tau_{B_{m,n}}I_{3}$, $\tau_{B_{m,n}}I_{4}$, $\tau_{B_{m,n}}I_{5}$.
Observe also that ${\mathcal Q}(H_m)$ is a generalized standard component of $\Gamma_{H_m}$, ${\mathcal P}(H'_n)$ is a generalized standard component of
$\Gamma_{H'_n}$, and $\Hom_{B_{m,n}}(P,Q)=0$ for any indecomposable modules $P\in{\mathcal P}(H'_n)$ and $Q\in{\mathcal Q}(H_m)$. This shows that ${\mathcal C}_{m,n}$
is a generalized standard component of $\Gamma_{B_{m,n}}$. Then it follows from \cite[Theorem 3.1]{RS2} that $B_{m,n}$ is a generalized double tilted algebra.
Moreover, the left tilted part $B_{m,n}^{(l)}$ is the product $H_m\times H'$ of $H_m$ and the path algebra $H'=K\Omega'$ of the quiver $\Omega'$ of the form
\[
\xymatrix@C=12pt@R=18pt{
&&&3'\ar[lld]_{\psi}\cr
1'\ar[r]^{\delta}&2'\cr
&&4'\ar[lu]_{\nu}&&5'\ar[ll]_{\theta}\cr
}
\]
and Dynkin type ${\Bbb D}_5$, while the right tilted part $B_{m,n}^{(r)}$ is the product $H'_n\times H$ of $H'_n$ and the path algebra $H=K\Omega$ of the quiver $\Omega$ of the form
\[
\xymatrix@C=12pt@R=18pt{
&3\cr
&&&2\ar[llu]_{\xi}\ar[ld]_{\eta}\ar[r]^{\sigma}&1\cr
5&&4\ar[ll]_{\omega}\cr
}
\]
and Dynkin type ${\Bbb D}_5$. In particular, we obtain that $B_{m,n}$ is a tame generalized double tilted algebra (equivalently, cycle-finite algebra) if and only if
$m, n \in \{8, 9, 10, 11\}$. Clearly, $B_{m,n}$ is of finite representation type if and only if $m, n \in \{8, 9, 10\}$. Finally, we note that the algebras $B_{m,n}$,
for all $m, n\geq 8$, are of global dimension three, with the simple module $S_1$ having the projective dimension three.
\end{ex}
%
%

\end{document}